\newcommand{\Z}{\mathbb{Z}}
\newcommand{\C}{\mathbb{C}}
\newcommand{\Gm}{\mathbb{G}_{\mathrm{m}}}
\newcommand{\cU}{\mathcal{U}}
\newcommand{\cK}{\mathcal{K}}
\newcommand{\cF}{\mathcal{F}}
\newcommand{\cG}{\mathcal{G}}
\newcommand{\IC}{\mathrm{IC}}
\newcommand{\cS}{\mathcal{S}}
\DeclareMathOperator{\supp}{supp}
\newcommand{\cA}{\mathcal{A}}
\newcommand{\cB}{\mathcal{B}}
\newcommand{\cD}{\mathcal{D}}
\newcommand{\cP}{\mathcal{P}}
\newcommand{\cQ}{\mathcal{Q}}
\newcommand{\fh}{\mathfrak{h}}
\newcommand{\Sfh}{\mathrm{S}\fh}
\newcommand{\SW}{\mathbf{A}_W}
\newcommand{\lgmod}{\textup{-}\mathrm{gmod}}
\newcommand{\lgmodfd}{\textup{-}\mathrm{gmod}_{\mathrm{fd}}}
\newcommand{\Irr}{\mathrm{Irr}}
\newcommand{\bnabla}{{\bar\nabla}}
\newcommand{\bDelta}{{\bar\Delta}}
\newcommand{\tK}{{\tilde K}}
\newcommand{\Ex}{\mathcal{E}x_W}
\newcommand{\smush}{\mathbin{\#}}
\newcommand{\phy}{\mathbf{f}}
\newcommand{\Db}{D^{\mathrm{b}}}
\newcommand{\Dm}{D^-}
\newcommand{\Dp}{D^+}
\newcommand{\Dfd}{\Db_{\mathrm{fd}}}
\newcommand{\Hom}{\mathrm{Hom}}
\newcommand{\RHom}{\mathrm{Hom}^\bullet}
\newcommand{\Ext}{\mathrm{Ext}}
\newcommand{\End}{\mathrm{End}}
\newcommand{\uHom}{\underline{\Hom}}
\newcommand{\uExt}{\underline{\Ext}}
\newcommand{\uRHom}{\uHom^\bullet}
\newcommand{\op}{{\mathrm{op}}}
\newcommand{\D}{\mathbb{D}}
\newcommand{\la}{\langle}
\newcommand{\ra}{\rangle}
\newcommand{\simto}{\overset{\sim}{\to}}
\DeclareMathOperator{\gr}{gr}
\DeclareMathOperator{\im}{im}
\numberwithin{equation}{section}
\newtheorem{thm}{Theorem}[section]
\newtheorem{lem}[thm]{Lemma}
\newtheorem{prop}[thm]{Proposition}
\newtheorem{cor}[thm]{Corollary}
\theoremstyle{definition}
\newtheorem{defn}[thm]{Definition}
\theoremstyle{remark}
\newtheorem{rmk}[thm]{Remark}
\title{Kostka systems and exotic $t$-structures for reflection groups}
\author{Pramod N. Achar}
\address{Department of Mathematics\\
  Louisiana State University\\
  Baton Rouge, LA \ 70803\\
  U.S.A.}
\email{pramod@math.lsu.edu}
\subjclass[2010]{20F55, 18E30.}
\thanks{The author received support from NSF Grant DMS-1001594.}
\begin{document}

\begin{abstract}
Let $W$ be a complex reflection group, acting on a complex vector space $\fh$.  Kato has recently introduced the notion of a ``Kostka system,'' which is a certain collection of finite-dimensional $W$-equivariant modules for the symmetric algebra on $\fh$.  In this paper, we show that Kostka systems can be used to construct ``exotic'' $t$-structures on the derived category of finite-dimensional modules, and we prove a derived-equivalence result for these $t$-structures.
\end{abstract}

\maketitle

\section{Introduction}

\subsection{Overview}

In the early 1980's, Shoji~\cite{sho:gpF4, sho:gpcg} and Lusztig~\cite{lus:cs5} showed that Green functions---certain  polynomials arising in the representation theory of finite groups of Lie type---can be computed by a rather elementary procedure, now often known as the \emph{Lusztig--Shoji algorithm}.  This algorithm can be interpreted as a computation in the Grothendieck group of the derived category of mixed $\ell$-adic complexes on the nilpotent cone of a reductive algebraic group, with the simple perverse sheaves playing a key role; see~\cite{a:gfhl}.

In recent work~\cite{kat:hsgp}, Kato has proposed an alternative interpretation of Green functions in terms of the Grothendieck group of the (derived) category of graded modules over the ring $\SW = \C[W] \smush \C[\fh^*]$, where $W$ is the Weyl group, and $\fh$ is the Cartan subalgebra.  In place of simple perverse sheaves, the key objects are now projective $\SW$-modules.  Thus, Kato's viewpoint is ``Koszul dual'' to the geometric one.  A prominent place is given to certain collections of finite-dimensional $\SW$-modules (denoted by $K_\chi$ in~\cite{kat:hsgp} and by $\bnabla_\chi$ here), called \emph{Kostka systems}.

In this paper, we study Kostka systems as generators of the derived category $\Dfd(\SW)$ of finite-dimensional $\SW$-modules.  We prove that they form a dualizable quasi-exceptional sequence, which implies that they determine a new $t$-structure on $\Dfd(\SW)$, called the \emph{exotic} $t$-structure.  The heart of this $t$-structure, denoted by $\Ex$, is a finite-length weakly quasi-hereditary category.  The main result (see Theorem~\ref{thm:dereq}) states that there is an equivalence of triangulated categories
\begin{equation}\label{eqn:mainthm}
\Db\Ex \simto \Dfd(\SW).
\end{equation}
Of course, projective $\SW$-modules cannot belong to $\Ex$, since they are not finite-dimensional.  Nevertheless, in some ways, they behave as though they were tilting objects of $\Ex$.  Thus, in a loose sense, which we do not attempt to make precise in this paper, the category $\Ex$ can be thought of ``Ringel dual'' to the category of $\SW$-modules.  (See Section~\ref{subsect:tilting}.)

\subsection{Analogy with geometric Langlands duality}

\begin{table}
\begin{center}
\setlength{\hangindent}{5pt}
\begin{tabular}{p{2.4in}|p{2.3in}}
\it geometric Langlands duality & \it Springer theory \\
\hline
perverse sheaves on the affine Grassmannian of $G$; geometric Satake &
perverse sheaves on the nilpotent cone of $G$; Springer correspondence \\
\smallskip $G \times \Gm$-equivariant coherent sheaves on the dual Lie algebra $\check{\mathfrak{g}}$ &
\smallskip graded $\SW$-modules, or $W \times \Gm$-equi\-variant coherent sheaves on $\check{\fh}$ \\
\smallskip coherent sheaves supported on the dual nilpotent cone $\check{\mathcal{N}} \subset \check{\mathfrak{g}}$ &
\smallskip finite-dimensional $\SW$-modules, or coherent sheaves supported on $\{0\} \subset \check{\fh}$ \\
\smallskip Andersen--Jantzen sheaves on $\check{\mathcal{N}}$ &
\smallskip Kostka systems $\{ \bnabla_\chi\}$ \\
\smallskip exotic (or perverse-coherent) $t$-structure on $\Db\mathrm{Coh}^{G \times \Gm}(\check{\mathcal{N}})$ &
\smallskip exotic $t$-struc\-ture on $\Dfd(\SW)$
\end{tabular}
\bigskip
\end{center}
\caption{Geometric Langlands duality and Springer theory}\label{tab:an}
\end{table}

A theme arising in geometric Langlands duality is that perverse or constructible sheaves on a (partial) affine flag variety for a reductive group $G$ should be described in terms of coherent sheaves on varieties related to the dual group $\check G$.  For instance, the spherical equivariant derived category of the affine Grassmannian $\mathsf{Gr}$ is closely related to coherent sheaves on the dual Lie algebra $\check{\mathfrak g}$; see~\cite{bf:esc}.

Springer theory is a rich source of phenomena that seem to be ``shadows at the level of the Weyl group'' of geometric Langlands duality.  Indeed, the Springer correspondence itself is in part a Weyl-group shadow of the geometric Satake equivalence~\cite{ah:gssc, ahr:gssc2}.  Another example is Rider's equivalence~\cite{rid:fnc} relating the equivariant derived category of the nilpotent cone to $\SW$-modules, or, equivalently, to $W$-equivariant coherent sheaves on the dual Cartan subalgebra $\check{\fh}$: this resembles the aforementioned result of~\cite{bf:esc}.  Further parallels are summarized in Table~\ref{tab:an}.

Kato's results and those of the present paper are contributions to the study of the ``Galois side'' (or ``coherent side'') of this picture.  Among (complexes of) coherent sheaves on $\check{\mathfrak g}$, those supported on the dual nilpotent cone $\check{\mathcal{N}}$ are of particular importance, especially those in the heart of an exotic $t$-structure determined by the so-called Andersen--Jantzen sheaves~\cite{bez:qes, bez:psaf}.  The Weyl-group analogue should involve sheaves supported on $\{0\} \subset \check{\fh}$---in other words, finite-dimensional $\SW$-modules.  Specifically, Kostka systems should be thought of as Weyl-group analogues of Andersen--Jantzen sheaves, and the equivalence~\eqref{eqn:mainthm} as a Weyl-group shadow of the derived equivalences from~\cite{bez:psaf} or~\cite[Theorem~1.2]{a:pcsnc}.

\subsection{Green functions for complex reflection groups}

The Lusztig--Shoji algorithm itself only requires knowing the reflection group $W$ and the preorder $\precsim$ on $\Irr(W)$ induced by the Springer correspondence.  (See~\cite{a:iglsa}.)  In particular, it makes sense to carry out the algorithm with a different, ``artificial'' preorder, or even with $W$ replaced by a complex reflection group that is not the Weyl group of any algebraic group.  See~\cite{sho:gf1, sho:gfls, gm:sp} for variations and conjectures on the Lusztig--Shoji algorithm.

One of Kato's aims in~\cite{kat:hsgp} was to provide a categorical framework for interpreting the output of the algorithm in this more general setting, where geometric tools like perverse sheaves are not available.  In the present paper, we try to preserve this goal.  Most definitions and constructions in this paper make sense for arbitrary complex reflection groups and arbitrary preorders on $\Irr(W)$.  We do invoke some results of Kato whose proofs involve the geometry of the nilpotent cone, and are thus valid only for Weyl groups.  However, outside of Section~\ref{sect:geom}, we treat these results as axioms: if, in the future, non-geometric proofs of these results become available for other complex reflection groups, then the main results of this paper will extend to those complex reflection groups as well.

\subsection{Acknowledgments}

I am grateful to Syu Kato for a number of helpful comments.  This paper has, of course, been deeply influenced by his ideas.  

\section{Notation and preliminaries}
\label{sect:prelim}

\subsection{Graded rings and vector spaces}

If $R$ is a noetherian graded $\C$-algebra, we write $R\lgmod$ (resp.~$R\lgmodfd$) for the category of finitely-generated (resp.~finite-dimensional) graded left $R$-modules.  For any $M \in R\lgmod$, we write $\gr_k V$ for its $k$-th graded component.  We define $M\la 1\ra$ to be the new graded module with
\[
\gr_k (M\la 1\ra) = \gr_{k-1} M.
\]
The operation $M \mapsto M\la 1\ra$ also makes sense for chain complexes of modules over $R$.  If $M$ and $N$ are (complexes of) graded $R$-modules, we define  $\uHom_R(M,N)$ (or simply $\uHom(M,N)$) to be the graded vector space given by
\[
\gr_k \uHom_R(M,N) = \Hom(M, N\la -k\ra).
\]

We use the term \emph{grade} to refer to the integers $k$ such that $\gr_k M \ne 0$, reserving the term \emph{degree} for homological uses, such as indexing the terms in a chain complex.  Thus, a module $M$ is said to \emph{have grades${}\ge n$} if $\gr_k M = 0$ for all $k < n$.  If $M$ is a chain complex of modules, we say that $M$ \emph{has grades${}\ge n$} if all its cohomology modules $H^i(M)$ have grades${}\ge n$.

If $M$ and $N$ are objects in a derived category of $R$-modules, we employ the usual notation
$\Hom^i(M,N) = \Hom(M, N[i])$, as well as $\uHom^i(M,N) = \uHom(M, N[i])$.

\subsection{Reflection groups and phyla}

Throughout the paper, $W$ will be a fixed complex reflection group, acting on a finite-dimensional complex vector space $\fh$.  Let $\Sfh$ be the symmetric algebra on $\fh$, regarded as a graded ring by declaring elements of $\fh \subset \Sfh$ to have degree~$1$.   Our main object of study is the ring
\[
\SW = \C[W] \smush \Sfh.
\]
Let $\SW\lgmod$ be the category of finitely-generated graded $\SW$-modules.  Henceforth, all $\SW$-modules are assumed to be objects of $\SW\lgmod$.

Let $\Irr(W)$ denote the set of irreducible complex characters of $W$.    For $\chi \in \Irr(W)$, let $\bar\chi$ denote the complex-conjugate character.  If $W$ is a Coxeter group, then all characters are real-valued, and $\bar\chi = \chi$, but general complex reflection groups may have characters that are not real-valued.

We also assume throughout that $\Irr(W)$ is equipped with a fixed total preorder $\precsim$, and that the equivalence relation $\sim$ induced by this preorder satisfies
\[
\chi \sim \bar\chi
\]
for all $\chi \in \Irr(W)$.  (In~\cite{kat:hsgp}, a preorder satisfying this condition is said to be \emph{of Malle type}.  Many arguments in this paper can likely be adapted to the case where this condition is dropped, but these generalizations will not be pursued here.)  Following~\cite{a:iglsa}, the equivalence classes for $\sim$ are called \emph{phyla}.  For $\chi \in \Irr(W)$, we write $[\chi]$ for the phylum to which it belongs.

\subsection{$\SW$-modules}

For each $\chi \in \Irr(W)$, choose a representation $L_\chi$ giving rise to that character.  Consider the vector space
\[
P_\chi = L_\chi \otimes \Sfh.
\]
We regard this as a graded $\SW$-module by having $\Sfh$ act on the second factor, and having $W$ act on both factors.  This is a projective $\SW$-module, and every indecomposable projective in $\SW\lgmod$ is of the form $P_\chi\la n\ra$ for some $\chi$ and some~$n$.  See~\cite[Lemma~2.2]{kat:hsgp}.

For brevity, we write $\Db(\SW)$ rather than $\Db(\SW\lgmod)$ for the bounded derived category of $\SW\lgmod$, and likewise for $\Dm(\SW)$ and $\Dp(\SW)$.

We will occasionally need to consider groups
\begin{equation}\label{eqn:hom-unbdd}
\Hom(M,N) \qquad\text{with $M \in \Dp(\SW)$ and $N \in \Dm(\SW)$.}
\end{equation}
This is to be understood by identifying $\Dp(\SW)$ and $\Dm(\SW)$ with full subcategories of the unbounded derived category $D(\SW)$.  Because $\SW$ has finite global dimension, we can ignore some of the technical difficulties that usually arise with unbounded derived categories.  In particular, according to~\cite[Proposition~3.4]{af:hduc}, complexes of projective modules in $D(\SW)$ are homotopy-projective.  Moreover, every object in $\Dp(\SW)$ is isomorphic to a bounded-below complex of projectives; see~\cite[\S 1.6]{af:hduc}.  Thus, if $M$ and $N$ are both given by explicit complexes of projectives, then~\eqref{eqn:hom-unbdd} is simply the set of homotopy classes of chain maps between those complexes.

\subsection{Duality}

For $M \in \Sfh\lgmod$, the graded vector space $\uHom_{\Sfh}(M,\Sfh)$ can naturally be regarded as an object of $\Sfh\lgmod$ itself.  It is well known that the derived functor $\D = R\uHom_{\Sfh}({-}, \Sfh)$ given an equivalence of categories $\Dm(\Sfh)^\op \simto \Dp(\Sfh)$; see~\cite[Example~V.2.2]{har:rd}.  Moreover, $\D$ takes bounded complexes to bounded complexes, and so gives an antiautoequivalence of $\Db(\Sfh)$.

Now, suppose that $M \in \SW\lgmod$.  Then the $\Sfh$-module $\uHom_{\Sfh}(M,\Sfh)$ carries an obvious $W$-action, and so can be regarded as an object of $\SW\lgmod$.  From the facts above about $\D$, one can deduce the $W$-equivariant analogues: there is an equivalence of categories
\[
\D = R\uHom_{\Sfh}({-},\Sfh): \Dm(\SW)^\op \simto \Dp(\SW)
\]
that restricts to an equivalence $\Db(\SW)^\op \simto \Db(\SW)$.  In particular, we have
\begin{equation}\label{eqn:dual-free}
\D(P_\chi) \cong P_{\bar\chi}.
\end{equation}

\subsection{Finite-dimensional modules}

As noted in the introduction, the main results of this paper involve the category
\[
\Dfd(\SW) = \left\{ X \in \Db(\SW) \;\Big|\,
\begin{array}{c}
\text{for all $i$, $H^i(X)$ is a}\\
\text{finite-dimensional $\SW$-module}
\end{array}
\right\}.
\]
We will occasionally make use of the fact that this is equivalent to the derived category $\Db(\SW\lgmodfd)$.  That fact is an instance of the following lemma.

\begin{lem}\label{lem:fd}
Let $R = \bigoplus_{n \ge 0} R_n$ be a nonnegatively graded noetherian $\C$-algebra, and assume that $R_0$ is finite-dimensional.  Then the natural functor
\[
\Db(R\lgmodfd) \to \Db(R\lgmod)
\]
is fully faithful.
\end{lem}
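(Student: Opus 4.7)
The plan is to reduce the statement to a standard criterion for fully faithfulness of a derived functor induced by an inclusion of abelian categories, and then verify the hypothesis of that criterion by a grade-truncation argument.

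First, I would record a preliminary fact about $R$ itself: each graded component $R_n$ is finite-dimensional. Since $R$ is noetherian and nonnegatively graded, its augmentation ideal $\bigoplus_{n \geq 1} R_n$ is finitely generated, so $R$ is generated over $R_0$ by finitely many elements in finitely many degrees. Hence each $R_n$ is a finitely generated $R_0$-module, and, because $R_0$ is finite-dimensional, $R_n$ is finite-dimensional as well. It follows that every object $B \in R\lgmod$ has finite-dimensional graded components $\gr_n B$ and is bounded below in grade. Note also that $R\lgmodfd$ is a Serre subcategory of $R\lgmod$, since subobjects, quotients, and extensions of finite-dimensional modules are finite-dimensional.

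Next, I would invoke the standard criterion: if $\mathcal{A}$ is a full extension-closed abelian subcategory of an abelian category $\mathcal{B}$, and if for every monomorphism $f \colon A \hookrightarrow B$ with $A \in \mathcal{A}$ and $B \in \mathcal{B}$ there exists a morphism $g \colon B \to A'$ with $A' \in \mathcal{A}$ such that $g \circ f$ is still a monomorphism, then the natural functor $\Db(\mathcal{A}) \to \Db(\mathcal{B})$ is fully faithful. (This is essentially the criterion appearing in Beilinson--Bernstein--Deligne and in Keller's treatments of derived categories of abelian subcategories.)

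Finally, I would verify the hypothesis for $\mathcal{A} = R\lgmodfd \subset \mathcal{B} = R\lgmod$. Given a monomorphism $A \hookrightarrow B$ with $A$ finite-dimensional, choose $b \in \Z$ so that $\gr_n A = 0$ for all $n > b$. Since $R$ is nonnegatively graded, $B_{>b} := \bigoplus_{n > b} \gr_n B$ is a graded $R$-submodule of $B$, and the quotient $A' := B/B_{>b}$ has grades bounded above by $b$ and bounded below by the minimum generating degree of $B$; by the preliminary observation it is therefore finite-dimensional. The composite $A \hookrightarrow B \twoheadrightarrow A'$ is injective since $A \cap B_{>b} = 0$, so we may take $g$ to be the quotient map. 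The only real obstacle is identifying the correct derived-category criterion to apply; once that is in hand, the content specific to this situation is simply the exactness of the graded truncation $B \mapsto B/B_{>b}$ and its transforming finitely generated modules into finite-dimensional ones.
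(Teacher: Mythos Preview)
Your proof is correct. The essential content is the same as the paper's: both rely on the exact grade-truncation $B \mapsto B/B_{>b}$ (the paper writes $M \mapsto M_{\le k}$, defining $M_{\ge k}$ as the submodule generated by homogeneous elements of grade~$\ge k$, which for nonnegatively graded $R$ coincides with your $B_{>k-1}$) and the observation that this functor carries $R\lgmod$ into $R\lgmodfd$. The difference is only in packaging: the paper reduces via \cite[Proposition~3.1.16]{bbd} to showing that the map $\Ext^i_{R\lgmodfd}(A,B) \to \Ext^i_{R\lgmod}(A,B)$ is an isomorphism, applies the truncation termwise to a Yoneda extension to obtain surjectivity, and then deduces injectivity from \cite[Remarque~3.1.17(1)]{bbd}; you instead feed the truncation into the mono-factorization criterion for full faithfulness of $\Db(\mathcal{A}) \to \Db(\mathcal{B})$. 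Your route is marginally slicker in that it sidesteps the separate injectivity step, but the two arguments are the same in substance.
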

\begin{proof}
We begin with a digression.  Since $R$ is noetherian and $R_0$ is finite-dimen\-sion\-al, each $R_n$ must be finite-dimensional.  It follows that for any $M \in R\lgmod$, each $\gr_n M$ is finite-dimensional.  Now, given $k \in \Z$, let $M_{\ge k} \subset M$ be the submodule generated by all homogeneous elements of grade${}\ge k$, and let $M_{\le k} = M/M_{\ge k+1}$.  It is easy to see that the functors $M \mapsto M_{\ge k}$ and $M \mapsto M_{\le k}$ are exact.  Moreover, $M_{\le k}$ is always finite-dimensional.

Returning to the statement of the lemma, recall that by a standard argument (see~\cite[Proposition~3.1.16]{bbd}), the question can be reduced to showing that the following natural morphism of $\delta$-functors (for $A, B \in R\lgmodfd$) is an isomorphism:
\begin{equation}\label{eqn:extfd}
\Ext^i_{R\lgmodfd}(A,B) \to \Ext^i_{R\lgmod}(A,B).
\end{equation}
When $i = 0$, this is obvious, and for $i = 1$, this follows from the fact that $R\lgmodfd$ is a Serre subcategory of $R\lgmod$.

For general $i>0$, each element of $\Ext^i_{R\lgmod}(A,B)$ is represented by some exact sequence
\begin{equation}\label{eqn:yoneda}
0 \to B \to M^i \to M^{i-1} \to \cdots \to M^1 \to A \to 0.
\end{equation}
Since $A$ and $B$ are finite-dimensional, there is a $k$ such that $A_{\ge k+1} = B_{\ge k+1} = 0$.  Applying the exact functor $M \mapsto M_{\le k}$ to~\eqref{eqn:yoneda} gives an exact sequence
\begin{equation}\label{eqn:yoneda2}
0 \to B \to M^i_{\le k} \to M^{i-1}_{\le k} \to \cdots \to M^1_{\le k} \to A \to 0.
\end{equation}
This represents the same element of $\Ext^i_{R\lgmod}(A,B)$ as~\eqref{eqn:yoneda}, but since every term is finite-dimensional, it also represents an element of $\Ext^i_{R\lgmodfd}(A,B)$.  We have just shown that~\eqref{eqn:extfd} is surjective for all $i$.

According to~\cite[Remarque~3.1.17(1)]{bbd}, if~\eqref{eqn:extfd} failed to be an isomorphism for some $i$, then for a minimal such $i$, it would be injective but not surjective.  So~\eqref{eqn:extfd} is indeed an isomorphism for all $i$.
\end{proof}

\subsection{Admissible subcategories of triangulated categories}

We conclude this section with a review of a result from homological algebra that we will use a number of times in the sequel.  

\begin{defn}
Let $\cD$ be a triangulated category, and let $\cA$ and $\cB$ be two full triangulated subcategories.  We say that $(\cA, \cB)$ is an \emph{admissible pair} if the following two conditions hold:
\begin{enumerate}
\item We have $\Hom(A,B) = 0$ whenever $A \in \cA$ and $B \in \cB$.
\item Together, the objects in $\cA$ and $\cB$ generate $\cD$ as a triangulated category.
\end{enumerate}
\end{defn}

This is slightly nonstandard terminology: usually, $\cA$ is said to be \emph{right-admis\-sible} if there exists a $\cB$ such that the conditions above hold; dually, $\cB$ is said to be \emph{left-admissible}.  The following lemma collects some consequences and equivalent characterizations.

\begin{lem}[{\cite[Propositions~1.5 and 1.6]{bk:rfsfr}}]\label{lem:admissible}
Let $(\cA, \cB)$ be an admissible pair in a triangulated category $\cD$.   Then:
\begin{enumerate}
\item The inclusion $\cA \to \cD$ admits a right adjoint $\imath: \cD \to \cA$.\label{it:rt-adjoint}
\item The inclusion $\cB \to \cD$ admits a left adjoint $\jmath: \cD \to \cB$.\label{it:lt-adjoint}
\item For every $X \in \cD$, there is a functorial distinguished triangle\label{it:adm-dt}
\[
\imath(X) \to X \to \jmath(X) \to.
\]
\item We have $\cA = \{X \in \cD \mid \text{$\Hom(X,B) = 0$ for all $B \in \cB$} \}$.\label{it:rt-orth}
\item We have $\cB = \{X \in \cD \mid \text{$\Hom(A,X) = 0$ for all $A \in \cA$} \}$.\label{it:lt-orth}
\item The inclusions $\cA \to \cD$ and $\cB \to \cD$ induce equivalences of triangulated categories\label{it:adm-quot}
\[
\cA \simto \cD/\cB
\qquad\text{and}\qquad \cB \simto \cD/\cA.
\]
\end{enumerate}
\end{lem}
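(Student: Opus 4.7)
My plan is to establish the functorial distinguished triangle in part~(\ref{it:adm-dt}) first, and then deduce the remaining parts from it by essentially formal arguments. The decomposition, when it exists, is unique up to unique isomorphism thanks to condition~(i) of admissibility: if $A \to X \to B \to$ and $A' \to X \to B' \to$ are two such triangles with $A, A' \in \cA$ and $B, B' \in \cB$, then $\Hom(A, B'[n]) = 0$ for all $n$ (since $\cB$ is triangulated), so the identity of $X$ lifts to a unique morphism $A \to A'$, and symmetry together with the five-lemma makes it an isomorphism.

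To construct the triangle, I would induct on the length of an expression for $X$ as an iterated cone of objects in $\cA \cup \cB$, which is available by condition~(ii). Objects of $\cA$ use the trivial triangle $X \to X \to 0 \to$ and objects of $\cB$ use $0 \to X \to X \to$. For the inductive step, suppose $X$ is the cone of some $f \colon Y \to Z$ where $Y$ and $Z$ already admit decomposition triangles $A_Y \to Y \to B_Y \to$ and $A_Z \to Z \to B_Z \to$. Because $\Hom(A_Y, B_Z) = 0$, the composite $A_Y \to Y \to Z \to B_Z$ vanishes, so $A_Y \to Z$ lifts through $A_Z$; a double application of the octahedral axiom (or a $3 \times 3$ lemma) then assembles the cones of $A_Y \to A_Z$ and $B_Y \to B_Z$ into a decomposition triangle for $X$. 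Verifying this octahedral bookkeeping---and in particular checking that the resulting objects lie in $\cA$ and $\cB$, which follows since each is triangulated---is the main technical step of the proof.

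With~(\ref{it:adm-dt}) in hand, the rest is formal. Uniqueness promotes $X \mapsto \imath(X)$ and $X \mapsto \jmath(X)$ to functors. Applying $\Hom(A', -)$ for $A' \in \cA$ to the triangle and using $\Hom(A', \jmath(X)[k]) = 0$ gives $\Hom_{\cD}(A', X) \simto \Hom_{\cA}(A', \imath(X))$, proving~(\ref{it:rt-adjoint}); part~(\ref{it:lt-adjoint}) is dual. For~(\ref{it:rt-orth}), applying $\Hom(-, \jmath(X))$ to the decomposition triangle of such an $X$ shows that $\mathrm{id}_{\jmath(X)}$ vanishes whenever $\Hom(X, \jmath(X)) = 0$, forcing $\jmath(X) = 0$ and $X \cong \imath(X) \in \cA$; part~(\ref{it:lt-orth}) is dual. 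For~(\ref{it:adm-quot}), the composite $\cA \hookrightarrow \cD \to \cD/\cB$ is essentially surjective because $\imath(X) \to X$ has cone $\jmath(X) \in \cB$ and therefore becomes invertible in the quotient, and full faithfulness follows from the adjunction of~(\ref{it:rt-adjoint}) via a standard calculus-of-fractions argument.
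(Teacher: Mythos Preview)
The paper does not give its own proof of this lemma: it is stated with a citation to Bondal--Kapranov and used as a black box. Your argument is essentially the standard proof one finds in that source and its descendants, so there is nothing to compare against.

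That said, your write-up is correct in outline. A couple of small points worth tightening: for the uniqueness of the decomposition triangle you implicitly use not only $\Hom(A,B')=0$ but also $\Hom(A,B'[-1])=0$ (to get \emph{uniqueness} of the lift $A\to A'$, not just existence); this follows, as you note, from $\cB$ being triangulated. In the inductive step, the passage from the lift $A_Y\to A_Z$ to a full morphism of triangles and then to a $3\times 3$ diagram uses TR3 together with the octahedral axiom (equivalently, the $3\times 3$ lemma of \cite[Proposition~1.1.11]{bbd}); it would be worth saying explicitly that the induced map $B_Y\to B_Z$ comes from TR3 and is not canonical, but any choice yields cones in $\cA$ and $\cB$ respectively. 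For part~(\ref{it:adm-quot}), a clean way to see full faithfulness is to observe that $\imath$ vanishes on $\cB$ (since $\Hom_\cA(A',\imath(B))\cong\Hom_\cD(A',B)=0$ for all $A'\in\cA$), hence factors through $\cD/\cB$, and the resulting functor $\cD/\cB\to\cA$ is inverse to the composite $\cA\hookrightarrow\cD\to\cD/\cB$.
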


Note, in particular, that each of $\cA$ and $\cB$ determines the other.

\section{Triangulated subcategories associated to a phylum}
\label{sect:phyla}

Given a phylum $\phy$, we define a full subcategory of $\Dm(\SW)$ as follows:
\[
\Dm(\SW)_{\preceq \phy} =
\left\{ X \in \Dm(\SW) \;\Bigg|\,
\begin{array}{c}
\text{$X$ is isomorphic to a bounded-above} \\
\text{complex $M^\bullet$ where each $M^i$ is a direct} \\
\text{sum of various $P_\chi\la n\ra$ with $[\chi] \preceq \phy$}
\end{array}
\right\}.
\]
We will also consider the ``strict'' version $\Dm(\SW)_{\prec \phy}$, as well as the analogous subcategories of $\Db(\SW)$ and $\Dp(\SW)$.  It follows from~\eqref{eqn:dual-free} that
\begin{equation}\label{eqn:dual-strat}
\D(\Dm(\SW)_{\preceq \phy}) = \Dp(\SW)_{\preceq \phy}
\qquad\text{and}\qquad
\D(\Db(\SW)_{\preceq \phy}) = \Db(\SW)_{\preceq \phy}.
\end{equation}
In addition, we have
\[
\Db(\SW)_{\preceq \phy} = \Dm(\SW)_{\preceq \phy} \cap \Db(\SW) = \Dp(\SW)_{\preceq \phy} \cap \Db(\SW).
\]
The first of these holds by a routine homological-algebra argument for bounded-above complexes of projectives over a ring with finite global dimension.  The second equality follows from the first using~\eqref{eqn:dual-strat}.

In this section, we first construct a collection of objects in $\Dm(\SW)$ and $\Dp(\SW)$ with various $\Hom$-vanishing properties related to the categories defined above.  Then, under the additional assumption that these objects lie in $\Db(\SW)$, we prove structural results for that category in the spirit of Lemma~\ref{lem:admissible}.

\subsection{Construction of $\nabla_\chi$ and $\Delta_\chi$}

We begin with the following result.

\begin{prop}\label{prop:nabla-exist}
For each $\chi \in \Irr(W)$, there is an object $\nabla_\chi \in \Dm(\SW)$ together with a morphism $s: P_\chi \to \nabla_\chi$ with the following properties:
\begin{enumerate}
\item The cone of $s$ lies in $\Dm(\SW)_{\prec [\chi]}$.\label{it:nabla-cone}
\item For $M \in \Dm(\SW)_{\prec [\chi]}$ or $\Dp(\SW)_{\prec [\chi]}$, we have $\Hom(M, \nabla_\chi) = 0$.\label{it:nabla-orth}
\end{enumerate}
Moreover, the pair $(\nabla_\chi, s)$ is unique up to unique isomorphism.
\end{prop}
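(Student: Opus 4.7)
My approach is a standard ``right mutation'' construction, building $\nabla_\chi$ from $P_\chi$ by iteratively taking cones that kill morphisms from generators of $\Dm(\SW)_{\prec [\chi]}$.  Set $X_0 = P_\chi$ and $s_0 = \mathrm{id}$.  Inductively, given $s_n: P_\chi \to X_n$ with $\mathrm{Cone}(s_n) \in \Dm(\SW)_{\prec [\chi]}$, choose a finite direct sum $V_n$ of shifts $P_{\chi'}\la k\ra[j]$ with $[\chi'] \prec [\chi]$, together with a morphism $V_n \to X_n$ realizing a set of generators of $\bigoplus_{[\chi'] \prec [\chi]} \uHom^\bullet(P_{\chi'}, X_n)$ as a graded $\SfhcW$-module.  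The existence of such a finite $V_n$ uses the Shephard--Todd--Chevalley theorem: $\SfhcW$ is a polynomial ring on positive-grade generators, so these $\uHom$-modules are noetherian.  Take $X_{n+1} = \mathrm{Cone}(V_n \to X_n)$ and define $s_{n+1}$ by composition; the octahedral axiom together with the stability of $\Dm(\SW)_{\prec [\chi]}$ under extensions ensures $\mathrm{Cone}(s_{n+1}) \in \Dm(\SW)_{\prec [\chi]}$.

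The key point is that this iteration stabilizes.  The grades of the minimal generators added in $V_n$ grow without bound as $n$ increases (reflecting the positive grades of the generators of $\SfhcW$), while the cohomological degrees involved are uniformly bounded because $\SW$ has finite global dimension.  Hence in any fixed grade and cohomological degree, the sequence $X_0 \to X_1 \to \cdots$ stabilizes after finitely many steps.  Defining $\nabla_\chi$ as the limiting object produces an element of $\Dm(\SW)$ equipped with $s: P_\chi \to \nabla_\chi$ whose cone lies in $\Dm(\SW)_{\prec [\chi]}$, and by construction $\Hom(P_{\chi'}\la k\ra[j], \nabla_\chi) = 0$ for every such generator.  Extending the vanishing to arbitrary $M \in \Dm(\SW)_{\prec [\chi]}$ is routine via the bounded-above complex representation of $M$ together with a termwise computation; for $M \in \Dp(\SW)_{\prec [\chi]}$, the analogous argument uses the bounded-below complex representation and requires $\nabla_\chi$ to have bounded cohomology (which follows from finite global dimension) so that Hom's out of the high-degree terms of $M$ vanish for degree reasons.

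Uniqueness up to unique isomorphism is then formal.  Given another pair $(\nabla'_\chi, s')$ satisfying (1) and (2), write $C = \mathrm{Cone}(s) \in \Dm(\SW)_{\prec [\chi]}$ and apply $\Hom({-}, \nabla'_\chi)$ to the triangle $P_\chi \xrightarrow{s} \nabla_\chi \to C \to P_\chi[1]$.  Since $C, C[-1] \in \Dm(\SW)_{\prec [\chi]}$ and $\nabla'_\chi$ satisfies~(2), both $\Hom(C, \nabla'_\chi)$ and $\Hom(C[-1], \nabla'_\chi)$ vanish, yielding an isomorphism $\Hom(\nabla_\chi, \nabla'_\chi) \simto \Hom(P_\chi, \nabla'_\chi)$ under which $s'$ corresponds to a unique $f: \nabla_\chi \to \nabla'_\chi$ with $f \circ s = s'$; the symmetric construction produces an inverse $g$, and uniqueness forces $gf = \mathrm{id}$ and $fg = \mathrm{id}$.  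The main obstacle I anticipate is the technical bookkeeping around convergence and boundedness of the iteration---showing that the $V_n$ really shift into deep enough grades that the process stabilizes in $\Dm(\SW)$, and that the resulting $\nabla_\chi$ has bounded cohomology.  This draws on two specific features of $\SW$---polynomiality of $\SfhcW$ and finite global dimension of $\SW$---rather than on purely formal properties of a triangulated category.
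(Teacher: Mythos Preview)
Your mutation idea is in the right spirit, but two of your supporting claims are incorrect, and the paper's argument is both simpler and avoids them.

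First, the assertion that ``cohomological degrees involved are uniformly bounded because $\SW$ has finite global dimension'' is not justified: each cone step pushes the complex one degree lower, so your $X_n$ naturally spread into degrees $[-n,0]$, and finite global dimension does not prevent this.  More seriously, your claim that $\nabla_\chi$ has bounded cohomology as a consequence of finite global dimension is false in general.  The paper explicitly does \emph{not} prove this here; it is imposed as an additional hypothesis at the start of \S3.2 and only established later (Lemma~\ref{lem:tk-nabla}) using geometric input from the generalized Springer correspondence.  So you cannot appeal to it.  Fortunately, for part~\eqref{it:nabla-orth} in the $\Dp$ case you only need $\nabla_\chi$ to be bounded \emph{above}, which is automatic in $\Dm$.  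The paper's argument for $M\in\Dp(\SW)_{\prec[\chi]}$ is simply to truncate: drop the terms of $M$ in degrees${}\le 1$ to get $M'$, so that $\Hom(M',\nabla_\chi)=\Hom(M'[1],\nabla_\chi)=0$ for trivial degree reasons (the complex $\nabla_\chi$ sits in degrees${}\le 0$), and one is reduced to the bounded case already handled.

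The paper sidesteps your convergence bookkeeping entirely by constructing $\nabla_\chi$ directly as an explicit chain complex rather than as a limit of iterated cones.  Set $N^0=P_\chi$, $N^i=0$ for $i>0$; having built $N^{j+1}$ and $d^{j+1}$, take the $\SW$-submodule of $\ker d^{j+1}$ generated by its $W$-isotypic subspaces $L_\psi\la m\ra$ with $\psi\prec\chi$, choose any finite sum $N^j$ of various $P_\psi\la n\ra$ (with $\psi\prec\chi$) surjecting onto it, and let $d^j$ be the composite.  No stabilization is needed: the resulting complex may well be unbounded below, and that is perfectly acceptable in $\Dm(\SW)$.  Property~\eqref{it:nabla-orth} for $M\in\Dm(\SW)_{\prec[\chi]}$ is then a routine null-homotopy argument on bounded-above complexes of projectives.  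Note also that only noetherianity of $\SW$ is used to make each $N^j$ finite; the Shephard--Todd--Chevalley theorem and the $\SfhcW$-module structure you invoke are unnecessary.  Your uniqueness argument is correct and matches the paper's.
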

\begin{proof}
Given a module $M \in \SW\lgmod$, let $M_{\prec [\chi]}$ be the $\SW$-submodule generated by all the homogeneous $W$-stable subspaces that are isomorphic to some $L_\psi\la m\ra$ with $\psi \prec \chi$.  Of course, $M_{\prec [\chi]}$ is actually generated by a finite number of such subspaces.  Thus, there is a surjective map $M' \twoheadrightarrow M_{\prec [\chi]}$, where $M'$ is a direct sum of finitely many objects of the form $P_\psi\la n\ra$ with $\psi \prec \chi$.

We now define a complex $(N^\bullet,d^\bullet)$ inductively as follows.  Let $N^i = 0$ for $i > 0$, and let $N^0 = P_\chi$.  Then, assuming that $N^i$ and $d^i: N^i \to N^{i+1}$ have already been defined for $i > j$, let us apply the construction of the preceding paragraph to $M = \ker d^{j+1} \subset N^{j+1}$.  Set $N^j = M'$, and then let $d^j: N^j \to N^{j+1}$ be the composition
\[
N^j \twoheadrightarrow (\ker d^{j+1})_{\prec [\chi]} \hookrightarrow N^{j+1}.
\]
Let $\nabla_\chi = (N^\bullet, d^\bullet)$.  There is an obvious morphism $s: P_\chi \to \nabla_\chi$.  Its cone is isomorphic to the complex obtained from $(N^\bullet, d^\bullet)$ by omitting $N^0$.  By construction, the $N^i$ for $i < 0$ are direct sums of $P_\psi\la n\ra$ with $\psi \prec \chi$, so it is clear that the cone of $s$ lies in $\Dm(\SW)_{\prec [\chi]}$.

For $M \in \Dm(\SW)_{\prec [\chi]}$ given by a suitable bounded-above complex of projectives, it is a routine exercise in homological algebra to show that any map $M \to \nabla_\chi$ is null-homotopic.  On the other hand, if $M \in \Dp(\SW)_{\prec [\chi]}$ is given by a bounded-below complex of projectives, let $M'$ be the subcomplex obtained by omitting the terms in degrees${}\le 1$, and form a distinguished triangle $M' \to M \to M'' \to$.  Then $M''$ lies in $\Db(\SW)_{\prec [\chi]}$.  It is clear that $\Hom(M', \nabla_\chi) = \Hom(M'[1],\nabla_\chi) = 0$, and thus $\Hom(M, \nabla_\chi) = \Hom(M'',\nabla_\chi) = 0$ as well.

Finally, suppose $s': P_\chi \to \nabla'_\chi$ were another morphism with the same properties, and let $C'$ be its cone.  Since $\Hom(C[-1],\nabla'_\chi) = 0$, the map $s'$ factors through $s$, and then the last assertion follows by a standard argument.
\end{proof}

\begin{rmk}\label{rmk:nabla-exist}
In the construction above, it is easy to see by induction that the complex $(N^\bullet, d^\bullet)$ representing $\nabla_\chi$ can be chosen such that each nonzero $N^j$ is generated in grades${}\ge -j \ge 0$.  It follows that $\nabla_\chi$ has grades${}\ge 0$.
\end{rmk}

\begin{prop}\label{prop:delta-exist}
For each $\chi \in \Irr(W)$, there is an object $\Delta_\chi \in \Dp(\SW)$ together with a morphism $t: \Delta_\chi \to P_\chi$ with the following properties:
\begin{enumerate}
\item The cone of $t$ lies in $\Dp(\SW)_{\prec [\chi]}$.\label{it:delta-cone}
\item For $M \in \Dm(\SW)_{\prec [\chi]}$ or $\Dp(\SW)_{\prec [\chi]}$, we have $\Hom(\Delta_\chi, M) = 0$.\label{it:delta-orth}
\end{enumerate}
Moreover, the pair $(\Delta_\chi, t)$ is unique up to unique isomorphism.
\end{prop}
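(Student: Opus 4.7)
The plan is to construct $\Delta_\chi$ by applying the duality functor $\D$ to $\nabla_{\bar\chi}$, whose existence was established in Proposition~\ref{prop:nabla-exist}.  Specifically, I would set
\[
\Delta_\chi = \D(\nabla_{\bar\chi}) \quad\text{and}\quad t = \D(s): \Delta_\chi \to \D(P_{\bar\chi}) \cong P_\chi,
\]
where $s: P_{\bar\chi} \to \nabla_{\bar\chi}$ is the morphism supplied by Proposition~\ref{prop:nabla-exist} and the last isomorphism is~\eqref{eqn:dual-free}.  The Malle-type hypothesis $[\chi] = [\bar\chi]$ is what allows this dualization to exchange statements about $\chi$ and $\bar\chi$ while keeping the stratification label consistent.

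To verify property~\eqref{it:delta-cone}, I would apply the contravariant functor $\D$ to the distinguished triangle $P_{\bar\chi} \xrightarrow{s} \nabla_{\bar\chi} \to C \to$ furnished by Proposition~\ref{prop:nabla-exist}\eqref{it:nabla-cone} with $C \in \Dm(\SW)_{\prec [\bar\chi]}$.  Rotating yields a distinguished triangle $\D(C) \to \Delta_\chi \xrightarrow{t} P_\chi \to \D(C)[1]$, and~\eqref{eqn:dual-strat} together with $[\bar\chi] = [\chi]$ shows that the cone $\D(C)[1]$ lies in $\Dp(\SW)_{\prec [\chi]}$.  For property~\eqref{it:delta-orth}, I would use the adjunction coming from $\D$ being a contravariant equivalence that squares to the identity, which gives
\[
\Hom(\Delta_\chi, M) = \Hom(\D(\nabla_{\bar\chi}), M) \cong \Hom(\D(M), \nabla_{\bar\chi}).
\]
When $M$ lies in $\Dm(\SW)_{\prec [\chi]}$ or $\Dp(\SW)_{\prec [\chi]}$, equation~\eqref{eqn:dual-strat} places $\D(M)$ in the opposite-bounded category at the same stratum $[\bar\chi] = [\chi]$, so the right-hand side vanishes by Proposition~\ref{prop:nabla-exist}\eqref{it:nabla-orth}.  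Uniqueness of $(\Delta_\chi, t)$ is then inherited from uniqueness of $(\nabla_{\bar\chi}, s)$ via the equivalence $\D$.

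The step that requires the most care is the Hom-adjunction in the previous paragraph: it concerns a $\Hom$-group with one argument in $\Dp$ and the other in $\Dm$, which forces a passage through the unbounded derived category as in~\eqref{eqn:hom-unbdd}.  Since $\SW$ has finite global dimension, however, $\D$ extends naturally to $D(\SW)$ and squares to the identity on both $\Dp$ and $\Dm$, so the adjunction is valid without additional complication; thus the only real obstacle dissolves, and the proof reduces to a formal consequence of Proposition~\ref{prop:nabla-exist}.
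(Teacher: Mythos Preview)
Your approach is exactly the one taken in the paper: define $\Delta_\chi = \D(\nabla_{\bar\chi})$ and $t = \D(s)$, then deduce the required properties from~\eqref{eqn:dual-free}, \eqref{eqn:dual-strat}, and Proposition~\ref{prop:nabla-exist}. Your write-up simply unpacks in detail what the paper states in a single sentence, and the extra care you take with the mixed $\Dp/\Dm$ Hom-group and the extension of $\D$ is appropriate and correct.
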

\begin{proof}
Let $\Delta_\chi = \D(\nabla_{\bar\chi})$, and let $t = \D(s): \Delta_\chi \to P_\chi$.  It follows from~\eqref{eqn:dual-free}, \eqref{eqn:dual-strat}, and Proposition~\ref{prop:nabla-exist} that $(\Delta_\chi, t)$ has the required properties.
\end{proof}

\begin{cor}\label{cor:delta-nabla-orth}
\begin{enumerate}
\item If $\chi \not\sim \psi$, then $\uRHom(\Delta_\chi, \nabla_\psi) = 0$.\label{it:orth-diff}
\item If $i > 0$, then $\uHom^i(\Delta_\chi, \nabla_\psi) = 0$ for all $\chi, \psi$.\label{it:orth-ext}
\end{enumerate}
\end{cor}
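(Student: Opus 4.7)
The approach is to reduce both statements to the orthogonality properties of Propositions \ref{prop:nabla-exist}(2) and \ref{prop:delta-exist}(2), applied to the defining triangles of $\Delta_\chi$ and $\nabla_\psi$, together with one additional ingredient (the degree-concentration of $\nabla_\psi$) for part (2).

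For part (1), since $\precsim$ is total and $\chi\not\sim\psi$, either $[\chi]\prec[\psi]$ or $[\chi]\succ[\psi]$. If $[\chi]\prec[\psi]$, I would apply $\Hom(-,\nabla_\psi[i]\la n\ra)$ to the triangle $\Delta_\chi\to P_\chi\to C\to$ of Proposition \ref{prop:delta-exist}, where $C\in\Dp(\SW)_{\prec[\chi]}\subseteq\Dp(\SW)_{\prec[\psi]}$. Since $P_\chi$ also lies in $\Dp(\SW)_{\prec[\psi]}$, Proposition \ref{prop:nabla-exist}(2) kills both neighbors of $\Hom(\Delta_\chi,\nabla_\psi[i]\la n\ra)$ in the resulting long exact sequence, for every shift and grade twist. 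The case $[\chi]\succ[\psi]$ is handled symmetrically: apply $\Hom(\Delta_\chi,-)$ to the triangle $P_\psi\to\nabla_\psi\to C'\to$ of Proposition \ref{prop:nabla-exist}, noting that $P_\psi$ and $C'$ both lie in $\Dm(\SW)_{\prec[\chi]}$, and invoke Proposition \ref{prop:delta-exist}(2).

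For part (2), the case $\chi\not\sim\psi$ is covered by part (1), so only $\chi\sim\psi$ requires attention. I would reuse the triangle $\Delta_\chi\to P_\chi\to C\to$; now $C\in\Dp(\SW)_{\prec[\chi]}=\Dp(\SW)_{\prec[\psi]}$, so Proposition \ref{prop:nabla-exist}(2) still annihilates the $C$-contributions, yielding an isomorphism
\[
\uHom^i(\Delta_\chi,\nabla_\psi) \simto \uHom^i(P_\chi,\nabla_\psi).
\]
Because $P_\chi=L_\chi\otimes\Sfh$ is projective, $\uHom_{\SW}(P_\chi,-)$ is exact and commutes with taking cohomology of complexes, so the right-hand side equals $\uHom_W(L_\chi, H^i(\nabla_\psi))$. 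The explicit construction in the proof of Proposition \ref{prop:nabla-exist} (cf.\ Remark \ref{rmk:nabla-exist}) represents $\nabla_\psi$ by a complex $(N^\bullet,d^\bullet)$ with $N^j=0$ for $j>0$, hence $H^i(\nabla_\psi)=0$ for $i>0$, completing the proof.

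The main obstacle is the $\chi\sim\psi$ case of part (2): the orthogonalities built into the characterizations of $\Delta_\chi$ and $\nabla_\psi$ only detect strictly lower phyla and cannot, on their own, cancel the higher $\Ext$-groups between objects of the same phylum. The extra input, namely the vanishing of $H^i(\nabla_\psi)$ in positive degrees, is not part of the abstract defining property but is visible from the inductive construction; pairing it with the projectivity of $P_\chi$ is what closes the gap.
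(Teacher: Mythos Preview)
Your argument is correct. For part~(1) you are unpacking exactly what the paper means by ``follows from Propositions~\ref{prop:nabla-exist}\eqref{it:nabla-orth} and~\ref{prop:delta-exist}\eqref{it:delta-orth}''; one could also note directly that $\Delta_\chi\in\Dp(\SW)_{\preceq[\chi]}$ and $\nabla_\psi\in\Dm(\SW)_{\preceq[\psi]}$ and apply those propositions without passing through the triangles, but the content is identical.

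For part~(2) your route genuinely differs from the paper's. The paper does not split into cases: it observes that since $\nabla_{\bar\chi}$ is represented by a complex of projectives in nonpositive degrees, its dual $\Delta_\chi=\D(\nabla_{\bar\chi})$ is a complex of projectives in nonnegative degrees; combined with $\nabla_\psi$ sitting in nonpositive degrees and the remarks after~\eqref{eqn:hom-unbdd}, any chain map $\Delta_\chi\to\nabla_\psi[i]$ with $i>0$ is zero on the nose. You instead use the triangle $\Delta_\chi\to P_\chi\to C\to$ to reduce (when $\chi\sim\psi$) to $\uHom^i(P_\chi,\nabla_\psi)$, and then invoke projectivity of $P_\chi$ together with $H^i(\nabla_\psi)=0$ for $i>0$. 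Your approach avoids any appeal to how $\D$ acts on explicit complexes and to the unbounded-$\Hom$ discussion, at the price of a case split; the paper's argument is uniform in $\chi,\psi$ and marginally shorter.
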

\begin{proof}
The first assertion follows from Propositions~\ref{prop:nabla-exist}\eqref{it:nabla-orth} and~\ref{prop:delta-exist}\eqref{it:nabla-orth}.  For the second, observe that by construction, $\nabla_\psi$ is isomorphic to a complex of projectives in nonpositive degrees, so $\Delta_\psi$ is isomorphic to a complex of projectives in nonnegative degrees.  The result then follows by the remarks after~\eqref{eqn:hom-unbdd}.
\end{proof}

\subsection{Admissible subcategories of $\Db(\SW)$}

For the remainder of this section, we impose the additional assumption that all the $\Delta_\chi$ and $\nabla_\chi$ lie in $\Db(\SW)$.  With this assumption, it makes sense to consider the following full triangulated subcategories of $\Db(\SW)$:
\begin{align*}
\Db(\SW)_{\phy} &= \text{the triangulated subcategory generated by the $\nabla_\chi\la n\ra$ with $\chi \in \phy$,} \\
\Db(\SW)^{\phy} &= \text{the triangulated subcategory generated by the $\Delta_\chi\la n\ra$ with $\chi \in \phy$.}
\end{align*}
We will see below that these two categories are equivalent.  It often happens that the $\nabla_\chi$ are easier to work with explicitly than the $\Delta_\chi$, so this equivalence will be useful for transfering facts about the former to the setting of the latter.

\begin{prop}\label{prop:nabla-gen}
For each phylum $\phy$, $\Db(\SW)_{\preceq \phy}$ is generated as a triangulated category by the $\nabla_\chi\la n\ra$ (resp.~the $\Delta_\chi\la n\ra$) with $[\chi] \preceq \phy$.
\end{prop}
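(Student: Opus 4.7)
The plan is to let $\cT_\nabla$ denote the triangulated subcategory of $\Db(\SW)$ generated by the $\nabla_\chi\la n\ra$ with $[\chi] \preceq \phy$, to prove $\cT_\nabla = \Db(\SW)_{\preceq \phy}$, and then to deduce the $\Delta_\chi$-statement by applying the duality $\D$. The deduction uses $\D(\nabla_\chi) \cong \Delta_{\bar\chi}$, the identity $\D(\Db(\SW)_{\preceq \phy}) = \Db(\SW)_{\preceq \phy}$ from \eqref{eqn:dual-strat}, and the Malle-type hypothesis $\chi \sim \bar\chi$, which ensures that the set $\{[\chi] : [\chi] \preceq \phy\}$ is invariant under $\chi \mapsto \bar\chi$.

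For the inclusion $\cT_\nabla \subseteq \Db(\SW)_{\preceq \phy}$, I would invoke the explicit complex $(N^\bullet, d^\bullet)$ constructed in the proof of Proposition~\ref{prop:nabla-exist}, which represents $\nabla_\chi$ by a bounded-above complex whose terms are direct sums of $P_\psi\la m\ra$ with $[\psi] \preceq [\chi]$; hence $\nabla_\chi \in \Dm(\SW)_{\preceq [\chi]}$. Together with the standing assumption $\nabla_\chi \in \Db(\SW)$ and the equality
\[
\Db(\SW)_{\preceq [\chi]} = \Dm(\SW)_{\preceq [\chi]} \cap \Db(\SW)
\]
recorded at the beginning of the section, this places $\nabla_\chi$ in $\Db(\SW)_{\preceq [\chi]} \subseteq \Db(\SW)_{\preceq \phy}$.

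For the reverse inclusion, it is enough to show that every $P_\chi\la n\ra$ with $[\chi] \preceq \phy$ belongs to $\cT_\nabla$, since $\Db(\SW)_{\preceq \phy}$ is visibly generated by such projectives. Since $\Irr(W)$ is finite and $\precsim$ is total, there are only finitely many phyla, and I argue by induction on $[\chi]$. When $[\chi]$ is the minimal phylum, the category $\Dm(\SW)_{\prec [\chi]}$ contains only the zero object (no admissible generating projectives exist), so Proposition~\ref{prop:nabla-exist}\eqref{it:nabla-cone} forces $s\colon P_\chi \to \nabla_\chi$ to be an isomorphism and $P_\chi \in \cT_\nabla$. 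In the inductive step, form the triangle $P_\chi \to \nabla_\chi \to C \to$; the cone $C$ is bounded (since $P_\chi$ and $\nabla_\chi$ are) and lies in $\Dm(\SW)_{\prec [\chi]}$, hence in $\Db(\SW)_{\prec [\chi]}$ by the same intersection identity. Thus $C$ is representable by a bounded complex of $P_\psi\la m\ra$ with $[\psi] \prec [\chi]$, and the inductive hypothesis places each such $P_\psi\la m\ra$—and therefore $C$ itself—inside $\cT_\nabla$; the triangle then yields $P_\chi \in \cT_\nabla$.

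I do not anticipate a serious obstacle: the argument is driven entirely by the defining triangle of $\nabla_\chi$ and the identification $\Db(\SW)_{\preceq \phy} = \Dm(\SW)_{\preceq \phy} \cap \Db(\SW)$. The only mildly delicate point is the passage from the $\nabla$-statement to the $\Delta$-statement via $\D$, where the invariance of the generating set under $\chi \mapsto \bar\chi$ must be noted explicitly; this is precisely where the Malle-type hypothesis is used.
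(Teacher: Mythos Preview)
Your proposal is correct and follows essentially the same approach as the paper: induction on the phylum using the distinguished triangle $P_\chi \to \nabla_\chi \to C \to$ with $C \in \Db(\SW)_{\prec [\chi]}$. You supply more detail than the paper does---in particular, you handle both inclusions explicitly and spell out the passage to the $\Delta_\chi$-statement via $\D$---but the underlying argument is the same.
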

\begin{proof}
This follows by induction on $\phy$ with respect to the order on the set of phyla, using the distinguished triangle $P_\chi \to \nabla_\chi \to C \to $ with $C \in \Db(\SW)_{\prec \phy}$.
\end{proof}

In the case of the $\nabla_\chi$, this statement can be refined a bit.  Recall from Remark~\ref{rmk:nabla-exist} that $\nabla_\chi$ has grades${}\ge 0$.  It follows that in the distinguished triangle $P_\chi \to \nabla_\chi \to C\to$, the object $C$ has grades${}\ge 0$.  By keeping track of grades in the induction, one can see that each $P_\psi$ is contained in the triangulated category generated by the $\nabla_\chi\la k\ra$ with $k \ge 0$.  We have just shown that part~\eqref{it:grades-proj} in the corollary below implies part~\eqref{it:grades-nab}.  (Note, in contrast, that the $\Delta_\chi$ do not, in general, have grades${}\ge 0$.)

\begin{cor}\label{cor:grades}
The following conditions on an object $M \in \Db(\SW)$ are equivalent:
\begin{enumerate}
\item $M$ has grades${}\ge n$.\label{it:grades-reg}
\item $M$ is isomorphic to a complex of projective $\SW$-modules each term of which has grades${}\ge n$.\label{it:grades-proj}
\item $M$ lies in the triangulated subcategory generated by the $\nabla_\chi\la k \ra$ with $k \ge n$.\label{it:grades-nab}
\end{enumerate}
\end{cor}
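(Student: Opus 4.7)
The strategy is to complete the cycle of implications by proving (3)$\Rightarrow$(1) and (1)$\Rightarrow$(2), since (2)$\Rightarrow$(3) has already been established in the paragraph preceding the corollary. For (3)$\Rightarrow$(1), I would observe that the full subcategory of $\Db(\SW)$ whose objects have grades${}\ge n$ is a triangulated subcategory: closure under cohomological shifts is immediate, and closure under third vertices of distinguished triangles follows from the long exact sequence in cohomology, since each $H^i(C)$ is sandwiched between a quotient of $H^i(B)$ and a submodule of $H^{i+1}(A)$. By Remark~\ref{rmk:nabla-exist}, each $\nabla_\chi$ has grades${}\ge 0$, so each generator $\nabla_\chi\la k\ra$ with $k\ge n$ has grades${}\ge k\ge n$; this forces every object of the triangulated subcategory they generate to satisfy~(1).

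For (1)$\Rightarrow$(2), I would first treat a single module $M\in\SW\lgmod$ with grades${}\ge n$. The quotient $M/\fh M$ is a graded $\C[W]$-module concentrated in grades${}\ge n$, so it decomposes as $\bigoplus L_{\chi_i}\la k_i\ra$ with $k_i\ge n$, and a $W$-equivariant splitting produces a surjection $P^0 = \bigoplus P_{\chi_i}\la k_i\ra \twoheadrightarrow M$ whose source is projective and has grades${}\ge n$. The kernel, being a submodule of $P^0$, again has grades${}\ge n$, so the construction iterates. Since $\SW$ has finite global dimension (inherited from $\Sfh$ via the semisimplicity of $\C[W]$), the procedure terminates in a bounded projective resolution of $M$ whose terms all have grades${}\ge n$.

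For a general $M\in\Db(\SW)$ with grades${}\ge n$, I would induct on the number of nonzero cohomology modules, using the canonical truncation triangle $\tau_{\le k}M \to M \to \tau_{\ge k+1}M \to $. Both truncations have cohomology with grades${}\ge n$ and strictly fewer nonzero cohomology modules, so by the inductive hypothesis they are represented by bounded projective complexes $P^\bullet$, $Q^\bullet$ whose terms all have grades${}\ge n$. The connecting morphism lifts to a chain map $Q^\bullet \to P^\bullet[1]$ (using that $P^\bullet$ is a complex of projectives), and the shifted mapping cone of this lift represents $M$ by a bounded projective complex whose terms $P^i \oplus Q^i$ still have grades${}\ge n$.

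The main obstacle is the module step of (1)$\Rightarrow$(2): one must arrange the projective cover to be built from shifted projectives $P_\chi\la k\ra$ with $k\ge n$, which hinges on the observation that a module with grades${}\ge n$ admits a generating set (for instance, a lift of a basis of $M/\fh M$) in grades${}\ge n$. Once this is in hand, the induction on cohomological length via canonical truncation and the triangulated formalities of (3)$\Rightarrow$(1) are entirely routine.
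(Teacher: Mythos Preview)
Your proposal is correct and follows exactly the same cycle of implications as the paper: (2)$\Rightarrow$(3) from the preceding paragraph, (3)$\Rightarrow$(1) via Remark~\ref{rmk:nabla-exist}, and (1)$\Rightarrow$(2) as a routine construction of a bounded projective resolution with controlled grades. The paper simply labels the last two steps as ``Remark~\ref{rmk:nabla-exist}'' and ``a routine exercise,'' while you have written out the details (triangulated closure of the grades${}\ge n$ condition, the minimal-cover construction for a single module, and the truncation induction for complexes); there is no substantive difference in approach.
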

\begin{proof}
We saw above that~\eqref{it:grades-proj} implies~\eqref{it:grades-nab}. It is a routine exercise to see that~\eqref{it:grades-reg} implies~\eqref{it:grades-proj}, and Remark~\ref{rmk:nabla-exist} tells us that~\eqref{it:grades-nab} implies~\eqref{it:grades-reg}.
\end{proof}

\begin{cor}\label{cor:phy-admis}
Each of the two pairs of categories $(\Db(\SW)_{\phy}, \Db(\SW)_{\prec \phy})$ and  $(\Db(\SW)_{\prec \phy}, \Db(\SW)^{\phy})$ is an admissible pair in $\Db(\SW)_{\preceq \phy}$.
\end{cor}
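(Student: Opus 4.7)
The plan is to verify the two defining conditions of an admissible pair — $\Hom$-vanishing between the subcategories and joint generation of $\Db(\SW)_{\preceq \phy}$ — for each of the two pairs.

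For the $\Hom$-vanishing in the first pair, the input is Proposition~\ref{prop:nabla-exist}\eqref{it:nabla-orth}: for $\chi \in \phy$ and $n \in \Z$, we have $\Hom(M, \nabla_\chi\la n\ra[k]) = 0$ for all $M \in \Db(\SW)_{\prec \phy}$ and all $k \in \Z$. This extends to $\Hom$-vanishing between the whole subcategories $\Db(\SW)_{\prec \phy}$ and $\Db(\SW)_\phy$ by the standard thick-subcategory argument: fix a generator $N = \nabla_\chi\la n\ra$ on one side and observe that $\{M : \Hom(M, N[k]) = 0 \text{ for all } k\}$ is a triangulated subcategory containing the projective generators $P_\xi\la n\ra$ (with $[\xi] \prec \phy$) of $\Db(\SW)_{\prec \phy}$, hence contains all of it; then reverse roles and, for a fixed $M \in \Db(\SW)_{\prec \phy}$, repeat the argument to extend the vanishing to every $N \in \Db(\SW)_\phy$. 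The second pair is handled in exactly the same way, with Proposition~\ref{prop:delta-exist} supplying the input vanishing $\Hom(\Delta_\chi, M) = 0$ in place of Proposition~\ref{prop:nabla-exist}\eqref{it:nabla-orth}.

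For the generation condition, invoke Proposition~\ref{prop:nabla-gen}: $\Db(\SW)_{\preceq \phy}$ is generated, as a triangulated category, by the $\nabla_\xi\la n\ra$ (and also by the $\Delta_\xi\la n\ra$) with $[\xi] \preceq \phy$. Split this generating family according to whether $[\xi] = \phy$ or $[\xi] \prec \phy$. The former are, by definition, the generators of $\Db(\SW)_\phy$ (respectively $\Db(\SW)^\phy$). The latter each lie in $\Db(\SW)_{\prec \phy}$: indeed, $\nabla_\xi$ (resp.\ $\Delta_\xi$) belongs to $\Db(\SW)_{\preceq[\xi]}$ by Proposition~\ref{prop:nabla-gen} applied to the smaller phylum $[\xi]$, and $\Db(\SW)_{\preceq[\xi]} \subseteq \Db(\SW)_{\prec \phy}$ whenever $[\xi] \prec \phy$. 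Hence the two subcategories jointly generate $\Db(\SW)_{\preceq \phy}$.

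There is no serious obstacle here: the corollary is essentially a repackaging of the orthogonality from Propositions~\ref{prop:nabla-exist} and~\ref{prop:delta-exist} together with the generation statement of Proposition~\ref{prop:nabla-gen}. The only point worth a moment's thought is verifying that the ``$[\xi] \prec \phy$'' instances of $\nabla_\xi$ and $\Delta_\xi$ sit inside $\Db(\SW)_{\prec \phy}$, but this is just Proposition~\ref{prop:nabla-gen} applied one step down the preorder on phyla.
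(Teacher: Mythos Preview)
Your argument is essentially the paper's own: you invoke exactly the three propositions the paper cites and spell out the routine thick-subcategory extension. That part is fine.

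There is, however, a direction issue you have silently passed over. For an admissible pair $(\cA,\cB)$ one needs $\Hom(\cA,\cB)=0$. For the first pair as written, $(\Db(\SW)_{\phy},\Db(\SW)_{\prec\phy})$, this means $\Hom(\nabla_\chi,M)=0$ for $M\in\Db(\SW)_{\prec\phy}$; for the second pair as written, $(\Db(\SW)_{\prec\phy},\Db(\SW)^{\phy})$, it means $\Hom(M,\Delta_\chi)=0$. What you actually verify, using Propositions~\ref{prop:nabla-exist}\eqref{it:nabla-orth} and~\ref{prop:delta-exist}\eqref{it:delta-orth}, is the \emph{opposite} direction in each case: $\Hom(M,\nabla_\chi)=0$ and $\Hom(\Delta_\chi,M)=0$. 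These are the vanishings for the pairs $(\Db(\SW)_{\prec\phy},\Db(\SW)_{\phy})$ and $(\Db(\SW)^{\phy},\Db(\SW)_{\prec\phy})$.

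The direction you prove is the correct one: the vanishing in the order literally asserted by the corollary is in fact \emph{false} in general (already for $W=\Z/2$ one has $\uHom^1(\nabla_{\mathrm{triv}},P_{\mathrm{sgn}}\la 1\ra)\neq 0$, so $\Hom(\Db(\SW)_{\phy},\Db(\SW)_{\prec\phy})\neq 0$). The statement of the corollary has the two pairs written in reversed order; the subsequent uses in the paper (e.g.\ Proposition~\ref{prop:nabla-admis} and the proof of Proposition~\ref{prop:dual}) are consistent with the reversed pairs. Your proof is therefore correct for the intended statement, but you should flag the discrepancy rather than treat the wrong-direction vanishing as if it established the pair as stated.
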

\begin{proof}
This follows from Propositions~\ref{prop:nabla-exist}\eqref{it:nabla-orth}, \ref{prop:delta-exist}\eqref{it:delta-orth}, and~\ref{prop:nabla-gen}.
\end{proof}

The next two results are just restatements of parts~\eqref{it:rt-orth}--\eqref{it:adm-quot} of Lemma~\ref{lem:admissible}.

\begin{prop}\label{prop:nabla-admis}
Let $\phy$ be a phylum, and let $M \in \Db(\SW)$. The following three conditions are equivalent:
\begin{enumerate}
\item $M \in \Db(\SW)_{\prec \phy}$.
\item $\uRHom(M, \nabla_\chi) = 0$ for all $\chi$ with $[\chi] \succeq \phy$.
\item $\uRHom(\Delta_\chi, M) = 0$ for all $\chi$ with $[\chi] \succeq \phy$. \qed
\end{enumerate}
\end{prop}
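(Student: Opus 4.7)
The implications (1) $\Rightarrow$ (2) and (1) $\Rightarrow$ (3) are immediate: if $[\chi] \succeq \phy$, then $\Db(\SW)_{\prec\phy} \subseteq \Db(\SW)_{\prec[\chi]}$, so applying Proposition~\ref{prop:nabla-exist}\eqref{it:nabla-orth} (respectively Proposition~\ref{prop:delta-exist}\eqref{it:delta-orth}) to each shift-and-grade-shift $M[-i]\la k\ra$ gives the required vanishings.

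The plan for (2) $\Rightarrow$ (1) is to peel off the top phylum of $M$ one stratum at a time, using the admissible pairs supplied by Corollary~\ref{cor:phy-admis}. Since $\Irr(W)$ is finite and totally preordered, we may choose a phylum $\phy_0$ with $M \in \Db(\SW)_{\preceq\phy_0}$ and argue by induction on $\phy_0$. If $\phy_0 \prec \phy$, we are done; so assume $\phy_0 \succeq \phy$. Corollary~\ref{cor:phy-admis} together with Lemma~\ref{lem:admissible}\eqref{it:adm-dt} yields a distinguished triangle
\[
\imath(M) \to M \to \jmath(M) \to
\]
with $\imath(M) \in \Db(\SW)_{\phy_0}$ and $\jmath(M) \in \Db(\SW)_{\prec\phy_0}$. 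For each $\chi \in \phy_0$ we have $\jmath(M) \in \Db(\SW)_{\prec[\chi]}$, so Proposition~\ref{prop:nabla-exist}\eqref{it:nabla-orth} gives $\uRHom(\jmath(M),\nabla_\chi) = 0$, and the long exact sequence produces $\uRHom(\imath(M),\nabla_\chi) \cong \uRHom(M,\nabla_\chi) = 0$ for all $\chi \in \phy_0$.

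The crux is to deduce $\imath(M) = 0$ from these vanishings. The full subcategory $\cS = \{N \in \Db(\SW)_{\phy_0} \mid \Hom(\imath(M),N) = 0\}$ is triangulated and stable under grade shifts; by the vanishings just obtained, it contains every $\nabla_\chi\la n\ra$ with $\chi \in \phy_0$, so by the very definition of $\Db(\SW)_{\phy_0}$ it equals the whole category. In particular $\imath(M) \in \cS$, forcing $\mathrm{id}_{\imath(M)} = 0$ and hence $\imath(M) = 0$. Thus $M \cong \jmath(M) \in \Db(\SW)_{\prec\phy_0}$; since the phyla are finite and totally ordered, $\Db(\SW)_{\prec\phy_0} = \Db(\SW)_{\preceq\phy_1}$ for the next phylum $\phy_1 \prec \phy_0$ (if any), and the argument is iterated until $\phy_0 \prec \phy$.

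The implication (3) $\Rightarrow$ (1) is strictly parallel, with the admissible pair $(\Db(\SW)_{\prec\phy_0},\Db(\SW)^{\phy_0})$ replacing $(\Db(\SW)_{\phy_0},\Db(\SW)_{\prec\phy_0})$ and Proposition~\ref{prop:delta-exist}\eqref{it:delta-orth} replacing Proposition~\ref{prop:nabla-exist}\eqref{it:nabla-orth}. The only nontrivial step in either direction is the non-degeneracy argument forcing the piece in $\Db(\SW)_{\phy_0}$ (respectively $\Db(\SW)^{\phy_0}$) to vanish; once the admissibility machinery produces the distinguished triangle, the rest is formal triangulated-category bookkeeping.
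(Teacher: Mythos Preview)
Your argument is correct and is precisely the iteration over phyla that the paper leaves implicit when it declares the proposition a restatement of Lemma~\ref{lem:admissible}\eqref{it:rt-orth}--\eqref{it:lt-orth} via Corollary~\ref{cor:phy-admis}. One small correction: define $\cS$ using $\uRHom$ (equivalently, require $\Hom(\imath(M),N[i])=0$ for all $i$) rather than a single $\Hom$---otherwise $\cS$ need not be closed under cones---but since you have already established $\uRHom(\imath(M),\nabla_\chi)=0$, this change is purely cosmetic.
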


\begin{lem}\label{lem:quotient}
The inclusion functors $\Db(\SW)_{\phy} \to \Db(\SW)$ and $\Db(\SW)^{\phy} \to \Db(\SW)$ induce equivalences of categories
\[
\Db(\SW)_{\phy} \simto \Db(\SW)_{\preceq \phy}/ \Db(\SW)_{\prec \phy} \overset{\sim}{\leftarrow} \Db(\SW)^{\phy}.\qed
\]
\end{lem}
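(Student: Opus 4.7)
The plan is to deduce this directly from the admissible pair structure established just above, specifically Corollary~\ref{cor:phy-admis}, combined with part~\eqref{it:adm-quot} of Lemma~\ref{lem:admissible}. Indeed, the text already signals this by saying that the lemma is just a restatement of Lemma~\ref{lem:admissible}\eqref{it:adm-quot}, so there is essentially nothing new to prove beyond invoking the bookkeeping.

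More concretely, Corollary~\ref{cor:phy-admis} tells us that inside the ambient triangulated category $\Db(\SW)_{\preceq \phy}$, both $(\Db(\SW)_{\phy}, \Db(\SW)_{\prec \phy})$ and $(\Db(\SW)_{\prec \phy}, \Db(\SW)^{\phy})$ are admissible pairs. Applying part~\eqref{it:adm-quot} of Lemma~\ref{lem:admissible} to the first pair (with $\cA = \Db(\SW)_{\phy}$ and $\cB = \Db(\SW)_{\prec \phy}$) yields the equivalence $\Db(\SW)_{\phy} \simto \Db(\SW)_{\preceq \phy}/\Db(\SW)_{\prec \phy}$, induced by the inclusion functor. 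Applying the same part of Lemma~\ref{lem:admissible} to the second pair (with $\cA = \Db(\SW)_{\prec \phy}$ and $\cB = \Db(\SW)^{\phy}$) yields the equivalence $\Db(\SW)^{\phy} \simto \Db(\SW)_{\preceq \phy}/\Db(\SW)_{\prec \phy}$, again induced by inclusion. Splicing these two equivalences at the common Verdier quotient gives the diagram in the statement.

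There is no real obstacle here: all the hard work has been done in constructing $\nabla_\chi$, $\Delta_\chi$ (Propositions~\ref{prop:nabla-exist} and~\ref{prop:delta-exist}), establishing the relevant $\Hom$-vanishing and generation (Propositions~\ref{prop:nabla-exist}\eqref{it:nabla-orth}, \ref{prop:delta-exist}\eqref{it:delta-orth}, and~\ref{prop:nabla-gen}), and packaging these as admissibility in Corollary~\ref{cor:phy-admis}. The only thing to double-check when writing up is that the ambient category for the admissible pairs is $\Db(\SW)_{\preceq \phy}$ (not $\Db(\SW)$), which is exactly the category whose Verdier quotient by $\Db(\SW)_{\prec \phy}$ appears in the statement; this matches how Corollary~\ref{cor:phy-admis} is phrased, so no adjustment is needed.
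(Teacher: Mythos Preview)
Your proposal is correct and matches the paper's approach exactly: the paper treats this lemma as an immediate restatement of Lemma~\ref{lem:admissible}\eqref{it:adm-quot} applied to the two admissible pairs of Corollary~\ref{cor:phy-admis}, and provides no further argument.
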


Let us denote the composition of these two equivalences by
\begin{equation}\label{eqn:t-defn}
T_\phy: \Db(\SW)_{\phy} \simto \Db(\SW)^{\phy}.
\end{equation}

\begin{prop}\label{prop:delta-nabla-cone}
For each $\chi$, there is a morphism $i: \Delta_\chi \to \nabla_\chi$ whose cone lies in $\Db(\SW)_{\prec [\chi]}$.  As a consequence, we have $T_{[\chi]}(\nabla_\chi) \cong \Delta_\chi$.
\end{prop}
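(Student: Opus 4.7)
The plan is to build $i$ directly as a composition and then apply the octahedral axiom. By Propositions~\ref{prop:delta-exist}\eqref{it:delta-cone} and~\ref{prop:nabla-exist}\eqref{it:nabla-cone}, we have morphisms
\[
\Delta_\chi \xrightarrow{\,t\,} P_\chi \xrightarrow{\,s\,} \nabla_\chi
\]
whose cones $C_t := \mathrm{cone}(t)$ and $C_s := \mathrm{cone}(s)$ both lie in $\Db(\SW)_{\prec [\chi]}$ (using the standing assumption that the $\Delta_\chi$ and $\nabla_\chi$ lie in $\Db(\SW)$, together with \eqref{eqn:dual-strat}). I would then set $i := s \circ t$. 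The octahedral axiom applied to this pair yields a distinguished triangle
\[
C_t \to \mathrm{cone}(i) \to C_s \to,
\]
and since $\Db(\SW)_{\prec[\chi]}$ is a triangulated subcategory closed under extensions, $\mathrm{cone}(i) \in \Db(\SW)_{\prec [\chi]}$, as required.

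For the consequence, I would unwind the definition of $T_{[\chi]}$ in~\eqref{eqn:t-defn}. By construction, $\nabla_\chi \in \Db(\SW)_{[\chi]}$ and $\Delta_\chi \in \Db(\SW)^{[\chi]}$, and both sit inside $\Db(\SW)_{\preceq [\chi]}$. The fact that $\mathrm{cone}(i) \in \Db(\SW)_{\prec [\chi]}$ means precisely that the morphism $i$ becomes an isomorphism in the Verdier quotient $\Db(\SW)_{\preceq [\chi]}/\Db(\SW)_{\prec [\chi]}$. Tracing through the two equivalences of Lemma~\ref{lem:quotient} then gives $T_{[\chi]}(\nabla_\chi) \cong \Delta_\chi$.

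There is no serious obstacle here; the only thing to be careful about is justifying that $C_t$ lies in the bounded category $\Db(\SW)_{\prec[\chi]}$ rather than merely in $\Dp(\SW)_{\prec[\chi]}$. But under the blanket assumption of this subsection, both $\Delta_\chi$ and $P_\chi$ are bounded, so $C_t$ is automatically bounded, and then it lies in $\Db(\SW) \cap \Dp(\SW)_{\prec[\chi]} = \Db(\SW)_{\prec[\chi]}$ by the characterization of $\Db(\SW)_{\preceq\phy}$ recalled just before this subsection. The analogous remark handles $C_s$.
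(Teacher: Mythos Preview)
Your proof is correct and follows essentially the same approach as the paper: set $i = s \circ t$ and apply the octahedral axiom to obtain a distinguished triangle exhibiting $\mathrm{cone}(i)$ as an extension of the two cones, both of which lie in $\Db(\SW)_{\prec[\chi]}$. Your additional remarks justifying boundedness of the cones and spelling out the consequence for $T_{[\chi]}$ are more explicit than the paper's version but entirely in the same spirit.
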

\begin{proof}
Consider the distinguished triangles $P_\chi \overset{s}{\to} \nabla_\chi \to C \to$ and and $\Delta_\chi \overset{t}{\to} P_\chi \to C' \to $.  Let $i = s \circ t$, and let $K$ be its cone.  Applying the octahedral axiom to this composition, one finds that there is a distinguished triangle of the form $C' \to K \to C \to$, and thus $K \in \Db(\SW)_{\prec [\chi]}$.
\end{proof}

The preceding proposition says that $\Delta_\chi$ and $\nabla_\chi$ become isomorphic in the quotient $\Db(\SW)_{\preceq \phy}/\Db(\SW)_{\prec \phy}$.  Following this isomorphism through the equivalences of Lemma~\ref{lem:quotient} gives us the next result.

\begin{cor}\label{cor:delta-nabla-same}
If $\chi \sim \psi$, then we have natural isomorphisms
\[
\uRHom(\Delta_\chi, \Delta_\psi) \simto \uRHom(\Delta_\chi, \nabla_\psi) \overset{\sim}{\leftarrow} \uRHom(\nabla_\chi, \nabla_\psi). \qed
\]
\end{cor}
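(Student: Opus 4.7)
The plan is to argue directly from Proposition~\ref{prop:delta-nabla-cone} by writing down the canonical morphisms $i_\chi : \Delta_\chi \to \nabla_\chi$ and $i_\psi : \Delta_\psi \to \nabla_\psi$ it provides, and checking that the two natural maps of the corollary --- precomposition with $i_\chi$ and postcomposition with $i_\psi$ --- are isomorphisms by applying $\uRHom$ to the defining distinguished triangles and invoking the orthogonality built into Proposition~\ref{prop:nabla-admis}.

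Set $\phy = [\chi] = [\psi]$, which makes sense because $\chi \sim \psi$. By Proposition~\ref{prop:delta-nabla-cone}, complete $i_\chi$ and $i_\psi$ to distinguished triangles
\[
\Delta_\chi \xrightarrow{i_\chi} \nabla_\chi \to K_\chi \to, \qquad \Delta_\psi \xrightarrow{i_\psi} \nabla_\psi \to K_\psi \to,
\]
with $K_\chi, K_\psi \in \Db(\SW)_{\prec \phy}$. Apply $\uRHom({-},\nabla_\psi)$ to the first triangle. Since $[\psi] = \phy \succeq \phy$, Proposition~\ref{prop:nabla-admis} gives $\uRHom(K_\chi,\nabla_\psi) = 0$, so the induced map
\[
\uRHom(\nabla_\chi,\nabla_\psi) \to \uRHom(\Delta_\chi,\nabla_\psi)
\]
is an isomorphism. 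Similarly, apply $\uRHom(\Delta_\chi,{-})$ to the second triangle. Since $[\chi] = \phy \succeq \phy$, the same proposition yields $\uRHom(\Delta_\chi,K_\psi) = 0$, so
\[
\uRHom(\Delta_\chi,\Delta_\psi) \to \uRHom(\Delta_\chi,\nabla_\psi)
\]
is also an isomorphism. Composing, these are precisely the two natural arrows in the statement, and their naturality in $\chi$ and $\psi$ follows from the uniqueness of $i_\chi, i_\psi$ up to the canonical identifications from Propositions~\ref{prop:nabla-exist} and~\ref{prop:delta-exist}.

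There is no real obstacle: the only delicate point is matching the maps one writes down from the triangles with the ``natural'' ones implicit in the statement, and this is taken care of by the uniqueness clauses of Propositions~\ref{prop:nabla-exist} and~\ref{prop:delta-exist}, which force any morphism into $\nabla_\psi$ from an object mapping to $P_\psi$, and any morphism out of $\Delta_\chi$ to an object receiving from $P_\chi$, to be canonical. Alternatively, one could deduce the corollary more abstractly by pushing the isomorphism $\Delta_\chi \cong \nabla_\chi$ in $\Db(\SW)_{\preceq\phy}/\Db(\SW)_{\prec\phy}$ (Proposition~\ref{prop:delta-nabla-cone}) through the equivalences of Lemma~\ref{lem:quotient}, but the triangle-based argument has the advantage of producing the isomorphisms at the chain level without passing to Verdier quotients.
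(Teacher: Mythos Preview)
Your argument is correct and is essentially an explicit unpacking of the paper's one-line justification. The paper observes that $\Delta_\chi$ and $\nabla_\chi$ become isomorphic in the Verdier quotient $\Db(\SW)_{\preceq\phy}/\Db(\SW)_{\prec\phy}$ and then invokes the equivalences of Lemma~\ref{lem:quotient}; you instead work directly with the distinguished triangles from Proposition~\ref{prop:delta-nabla-cone} and kill the cones using Proposition~\ref{prop:nabla-admis}. These are two presentations of the same mechanism---the admissible-pair structure of Corollary~\ref{cor:phy-admis}---and indeed you note the quotient-category route yourself as an alternative. Your version has the minor advantage of making the specific maps (precomposition with $i_\chi$, postcomposition with $i_\psi$) visible, which is helpful when one later needs to know what ``natural'' means here.
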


\begin{cor}\label{cor:nabla-ext}
If $\chi \sim \psi$, then $\uHom^i(\Delta_\chi, \Delta_\psi) = \uHom^i(\nabla_\chi, \nabla_\psi) = 0$ for $i > 0$.
\end{cor}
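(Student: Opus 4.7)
The plan is to deduce this immediately from the two results that directly precede it, which together pin down all three $\uRHom$ complexes between $\Delta$'s and $\nabla$'s in the same phylum.

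More precisely, I would first invoke Corollary~\ref{cor:delta-nabla-same}: since $\chi \sim \psi$, we have natural isomorphisms of graded vector spaces
\[
\uRHom(\Delta_\chi, \Delta_\psi) \simto \uRHom(\Delta_\chi, \nabla_\psi) \overset{\sim}{\leftarrow} \uRHom(\nabla_\chi, \nabla_\psi).
\]
Taking the $i$-th cohomology of these complexes yields natural isomorphisms $\uHom^i(\Delta_\chi, \Delta_\psi) \cong \uHom^i(\Delta_\chi, \nabla_\psi) \cong \uHom^i(\nabla_\chi, \nabla_\psi)$ for every $i \in \Z$.

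Then I would apply Corollary~\ref{cor:delta-nabla-orth}\eqref{it:orth-ext}, which says that $\uHom^i(\Delta_\chi, \nabla_\psi) = 0$ for all $i > 0$ and all $\chi, \psi$ (with no $\sim$-hypothesis). Combining this with the isomorphisms above gives the desired vanishing of $\uHom^i(\Delta_\chi, \Delta_\psi)$ and $\uHom^i(\nabla_\chi, \nabla_\psi)$ for $i > 0$.

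There is no real obstacle here: the result is essentially a formal consequence of the two corollaries it follows. The substantive content has already been established: Corollary~\ref{cor:delta-nabla-orth}\eqref{it:orth-ext} came from the fact that $\nabla_\psi$ (resp.\ $\Delta_\chi$) can be represented by a complex of projectives in nonpositive (resp.\ nonnegative) degrees, and Corollary~\ref{cor:delta-nabla-same} was extracted from Proposition~\ref{prop:delta-nabla-cone} via the quotient-category description of Lemma~\ref{lem:quotient}.
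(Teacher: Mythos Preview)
Your proof is correct and is exactly the paper's approach: the paper's proof is the single sentence ``This follows from Corollary~\ref{cor:delta-nabla-orth}\eqref{it:orth-ext} and Corollary~\ref{cor:delta-nabla-same},'' and you have simply spelled out that deduction in detail.
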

\begin{proof}
This follows from Corollary~\ref{cor:delta-nabla-orth}\eqref{it:orth-ext} and Corollary~\ref{cor:delta-nabla-same}.
\end{proof}

\subsection{Negative $\Ext$-vanishing for the $\nabla_\chi$}

It was remarked earlier that the $\nabla_\chi$ are often easier to work with than the $\Delta_\chi$.  The reason is that the $\nabla_\chi$ often belong to $\SW\lgmod$.  The following proposition gives a criterion for this to hold.  This proposition will not be used elsewhere in the paper, since, in the context of generalized Springer correspondences, Kato has shown this using a rather different argument.  The assumption that $\nabla_\chi \in \Db(\SW)$ remains in force.

\begin{prop}\label{prop:costd-abel}
The following conditions are equivalent:
\begin{enumerate}
\item We have $\uHom^i(\nabla_\chi, \nabla_\psi) = 0$ for all $i < 0$ and all $\chi, \psi \in \Irr(W)$.
\item We have $\nabla_\chi \in \SW\lgmod$ for all $\chi \in \Irr(W)$.
\end{enumerate}
\end{prop}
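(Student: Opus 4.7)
The direction $(2) \Rightarrow (1)$ is immediate: if every $\nabla_\chi$ is an object of $\SW\lgmod$, then $\uHom^i(\nabla_\chi, \nabla_\psi)$ for $i < 0$ is an ordinary Ext group between modules in a negative cohomological degree, hence vanishes.

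For $(1) \Rightarrow (2)$, the plan is to first upgrade hypothesis~(1) to a Hom-vanishing statement involving the $\Delta$'s, then transport this to all projectives $P_\tau$, and conclude that the cohomology of $\nabla_\chi$ is concentrated in degree~$0$.

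\emph{Step 1 (from $\nabla$ to $\Delta$).} Under~(1), combining Corollaries~\ref{cor:delta-nabla-orth}(\ref{it:orth-diff}) and~\ref{cor:delta-nabla-same} yields
\[
\uHom^i(\Delta_\tau, \nabla_\chi) = 0 \quad \text{for all } \tau, \chi \in \Irr(W) \text{ and all } i < 0.
\]
Indeed, if $\tau \not\sim \chi$ then already $\uRHom(\Delta_\tau, \nabla_\chi) = 0$ by Corollary~\ref{cor:delta-nabla-orth}(\ref{it:orth-diff}); if $\tau \sim \chi$ then Corollary~\ref{cor:delta-nabla-same} identifies $\uHom^i(\Delta_\tau, \nabla_\chi)$ with $\uHom^i(\nabla_\tau, \nabla_\chi)$, which vanishes for $i < 0$ by~(1).

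\emph{Step 2 (from $\Delta$ to $P$).} Induct on $[\tau]$ to prove $\uHom^i(P_\tau, \nabla_\chi) = 0$ for all $\chi$ and all $i < 0$. The base case is $[\tau]$ minimal: here Proposition~\ref{prop:nabla-exist} forces $\nabla_{\bar\tau} = P_{\bar\tau}$ (the cone of $s$ lies in $\Dm(\SW)_{\prec [\bar\tau]} = 0$), so $\Delta_\tau = \D(\nabla_{\bar\tau}) = \D(P_{\bar\tau}) = P_\tau$ by~\eqref{eqn:dual-free}, and the claim reduces to Step~1. For the inductive step, apply $\uHom(-, \nabla_\chi)$ to the distinguished triangle $\Delta_\tau \to P_\tau \to C'_\tau$ of Proposition~\ref{prop:delta-exist}; Step~1 kills the $\Delta_\tau$ contributions (in both relevant degrees) and yields $\uHom^i(P_\tau, \nabla_\chi) \cong \uHom^i(C'_\tau, \nabla_\chi)$ for $i < 0$. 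Since $C'_\tau$ admits a representation as a bounded complex of projectives $P_{\tau'}\la n\ra$ with $[\tau'] \prec [\tau]$, a Postnikov-filtration argument using the inductive hypothesis on $[\tau']$ concludes the step.

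\emph{Step 3 (conclusion).} Because $P_\tau$ is projective in $\SW\lgmod$ and $\nabla_\chi$ lies in non-positive cohomological degrees (Remark~\ref{rmk:nabla-exist}), one has $\uHom^i(P_\tau\la n\ra, \nabla_\chi) = \uHom_{\SW}(P_\tau\la n\ra, H^i(\nabla_\chi))$. Step~2 therefore forces $H^i(\nabla_\chi) = 0$ for $i < 0$, so $\nabla_\chi \in \SW\lgmod$.

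\emph{Main obstacle.} The Postnikov-filtration argument in Step~2 is the delicate point: a naive stupid-truncation analysis of $C'_\tau$ produces, for each $i < 0$, a contribution of the form $\uHom(Q^{-i+1}_\tau, H^0(\nabla_\chi))$, and these individual terms need not vanish (the sub-inductive hypothesis only controls $\uHom^{<0}(P_{\tau'}, \nabla_\chi)$, not the degree-zero piece). The core technical task is to show that these contributions are either cancelled by the Postnikov connecting maps or eliminated using the structural description $H^0(\nabla_\chi) = P_\chi/(P_\chi)_{\prec [\chi]}$ arising from the construction of $\nabla_\chi$, combined with the admissibility structure of Corollary~\ref{cor:phy-admis}.
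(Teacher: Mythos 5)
The gap you flag in Step~2 is genuine, and it is not a minor technical nuisance but the crux of the matter: the route through the triangle $\Delta_\tau \to P_\tau \to C'_\tau$ cannot be made to work by a Postnikov argument. Since $C'_\tau = \D(C_{\bar\tau})$ and $C_{\bar\tau}$ (the cone of $P_{\bar\tau} \to \nabla_{\bar\tau}$) is a complex of projectives in degrees ${}\le -1$, the complex $C'_\tau$ sits in degrees ${}\ge 1$. Its stupid filtration therefore has graded pieces $P_{\tau'}\la n\ra[-p]$ with $p \ge 1$, and
\[
\uHom^i\bigl(P_{\tau'}\la n\ra[-p],\, \nabla_\chi\bigr) \;=\; \uHom^{\,i+p}\bigl(P_{\tau'}\la n\ra,\, \nabla_\chi\bigr),
\]
which for $i<0$ and $p\ge 1$ lands in degrees $i+p \ge 0$. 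Your inductive hypothesis only controls $\uHom^{<0}(P_{\tau'},\nabla_\chi)$, so you are forced to confront the degree-$0$ pieces $\uHom(P_{\tau'}\la n\ra, H^0(\nabla_\chi))$, which do not vanish in general. There is no obvious cancellation available: the connecting maps in the Postnikov tower are not under control at this stage, and $H^0(\nabla_\chi) = P_\chi/(P_\chi)_{\prec[\chi]}$ does receive nonzero maps from $P_{\tau'}$ with $\tau'\prec\chi$. In short, the sign of the shift runs the wrong way.

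The paper's proof avoids this entirely by choosing the other triangle. It works with $P_\chi \to \nabla_\chi \to C$ and the relation $[P_\chi] \in \{[C[-1]]\} * \{[\nabla_\chi]\}$. Because $C$ is in degrees ${}\le -1$, the object $C[-1]$ is in degrees ${}\le 0$, and its stupid filtration has pieces $P_\theta\la m\ra[p]$ with $p\ge 0$ and $\theta\prec\chi$. Replacing each such $P_\theta$ inductively by its own $*$-decomposition into shifted $\nabla$'s (which, by induction, again has only nonnegative shifts) yields
\[
[P_\chi] \in \{[\nabla_{\psi_1}\la n_1\ra[k_1]]\} * \cdots * \{[\nabla_{\psi_j}\la n_j\ra[k_j]]\} * \{[\nabla_\chi]\}, \qquad k_i\ge 0.
\]
Now $\uHom^i(\nabla_{\psi_j}\la n_j\ra[k_j],\nabla_\psi) = \uHom^{\,i-k_j}(\nabla_{\psi_j}\la n_j\ra,\nabla_\psi)$, and for $i<0$ and $k_j\ge 0$ the index $i-k_j$ is again strictly negative, so hypothesis~(1) applies directly to every factor. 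This gives $\uHom^i(P_\chi,\nabla_\psi\la n\ra)=0$ for all $i\ne 0$ without ever needing to control degree-$0$ $\Hom$ groups for strictly smaller phyla, which is exactly what your approach would require and what your inductive hypothesis does not supply. Your Steps~1 and~3 are fine, and Step~2's base case is fine; the fix is to replace the inductive mechanism of Step~2 with the $*$-decomposition built from the cone $C$ rather than the cone $C'_\tau$.
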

\begin{proof}
It is obvious that the second condition implies the first, so we will focus on the other implication.  For an object $X$, let $[X]$ denote its isomorphism class.  We will make use of the ``$*$'' operation for triangulated categories; see~\cite[\S 1.3.9]{bbd}.  If $\mathcal{X}$ and $\mathcal{Y}$ are two sets of isomorphism classes of objects, then $\mathcal{X} * \mathcal{Y}$ is the set of isomorphism classes $[Z]$ such that $Z$ fits into a distinguished triangle $X \to Z \to Y \to$ with $[X] \in \mathcal{X}$ and $[Y] \in \mathcal{Y}$.  This operation is associative.  

We claim that there are characters $\psi_i \in \Irr(W)$ and integers $n_i$, $k_i$ such that
\begin{equation}\label{eqn:l-nabla-star}
[P_\chi] \in \{[\nabla_{\psi_1}\la n_1\ra[k_1]]\} * \{[\nabla_{\psi_2}\la n_2\ra[k_2]]\} * \cdots * \{[\nabla_{\psi_j}\la n_j\ra[k_j]]\} * \{[\nabla_\chi]\}
\end{equation}
and where $\psi_i \prec \chi$ and $k_i \ge 0$ for all $i$.  We prove this claim by induction with respect to the preorder $\precsim$.  If $\chi$ is minimal, then $P_\chi = \nabla_\chi$, and there is nothing to prove.  Otherwise, form the distinguished triangle $P_\chi \to \nabla_\chi \to C \to$, so that \begin{equation}\label{eqn:l-c-nabla}
[P_\chi] \in \{[C[-1]]\} * \{[\nabla_\chi]\}.
\end{equation}
Referring to the explicit construction in the proof of Proposition~\ref{prop:nabla-exist} again, we see that $C[-1]$ is given by a complex of $P_\theta\la m\ra$'s concentrated in degrees${}\le 0$, with $\theta \prec \chi$.  In other words, there is an expression of the form
\[
[C[-1]] \in \{[P_{\theta_1}\la m_1\ra[p_1]]\} * \cdots * \{[P_{\theta_k}\la m_k\ra[p_k]]\},
\]
where $\theta_i \prec \chi$ and $p_i \ge 0$ for all $i$.  By induction, we can replace each term here by one of the form~\eqref{eqn:l-nabla-star}.  Combining this with~\eqref{eqn:l-c-nabla} yields the desired expression for $P_\chi$, so the proof of~\eqref{eqn:l-nabla-star} is complete.

We now claim that
\[
\uHom^i(P_\chi, \nabla_\psi\la n\ra) = 0 \qquad\text{if $i \ne  0$.}
\]
Indeed, for $i > 0$, this is obvious by construction, whereas for $i < 0$, it follows from~\eqref{eqn:l-nabla-star} and the assumption that $\uHom^j(\nabla_\theta, \nabla_\psi)$ vanishes for $j < 0$.  Finally, we observe that an object $X$ of $\Db(\SW)$ lies in $\SW\lgmod$ if and only if $\uHom^i(P_\chi,X) = 0$ for all $\chi$ and all $i \ne 0$.
\end{proof}

\section{Results from the geometry of generalized Springer correspondences}
\label{sect:geom}

One source of natural preorders on $\Irr(W)$ for certain Coxeter groups $W$ is Lusztig's \emph{generalized Springer correspondence}~\cite{lus:icc}, which involves the study of certain perverse sheaves on the unipotent variety of a reductive group.  In this setting, Kato has shown~\cite{kat:hsgp, kat:asea} that one can exploit the geometry to prove a number of strong results about the $P_\chi$ and the $\nabla_\chi$.  His results are stated in a somewhat different language, however, so this section is devoted to rephrasing Kato's results in terms that are better suited to the aims of the present paper.  In particular, we prove that the $\nabla_\chi$ coincide with the modules  denoted $\tK_\chi$ in~\cite{kat:hsgp}.  The argument given here is an adaptation of one given by Kato in~\cite{kat:asea}.

We begin with some notation.  Let $G$ be a connected complex reductive algebraic group, and let $\cU$ denote its unipotent variety.  Let $L \subset G$ be a Levi subgroup, $C_1 \subset L$ a unipotent class in $L$, and $E_1$ a local system on $C_1$ such that the triple $(L,C_1,E_1)$ appears in~\cite[Theorem~6.5]{lus:icc}.  According to~\cite[Theorem~9.2]{lus:icc}, the group $N_G(L)/L$, where $N_G(L)$ is the normalizer of $L$ in $G$, is a Coxeter group.

Let $W = N_G(L)/L$.  Associated to the data $(L, C_1, E_1)$ is a certain semisimple $G$-equivariant perverse sheaf $\cK$ on $\cU$ that is equipped with a natural isomorphism $\End(\cK) \cong \C[W]$.  This isomorphism determines a decomposition
\[
\cK \cong \bigoplus_{\chi \in \Irr(W)} \IC_\chi \otimes L_\chi,
\]
where the $\IC_\chi$ are distinct simple perverse sheaves.  Define a preorder on $\Irr(W)$ by
\begin{equation}\label{eqn:geom-preorder}
\chi \precsim \psi \qquad\text{if}\qquad \supp \IC_\chi \subset \overline{\supp \IC_\psi}.
\end{equation}
The support of each $\IC_\chi$ is the closure of one unipotent class, so the phyla of this preorder can be identified with a subset of the set of unipotent classes of $G$.

Throughout this section, we will assume that $W$ and $\precsim$ arise in this way.  We will use the equivariant derived category $\Db_G(\cU)$ to construct certain $\SW$-modules.  The starting point is the fact~\cite{lus:cls2,kat:hsgp} that there is an isomorphism of graded rings
\[
\bigoplus_{i \ge 0} \Hom^{2i}_{\Db_G(\cU)}(\cK,\cK) \cong \SW.
\]
(Also, $\Hom^i(\cK,\cK) = 0$ for $i$ odd.)
This lets us define an additive functor
\[
\cS: \Db_G(\cU) \to \SW\lgmod
\qquad\text{by}\qquad
\gr_k \cS(\cF) = \Hom^{2k}_{\Db_G(\cU)}(\cK,\cF).
\]
In particular, we have $\cS(\IC_\chi) \cong P_\chi$.  Moreover, for any $\cF$, the natural map
\[
\Hom(\cK,\cF[2k]) \simto \Hom(\cS(\cK), \cS(\cF)\la -2k\ra)
\]
is an isomorphism, as both sides are naturally identified with $\gr_k \cS(\cF)$.  Since each $\IC_\chi$ is a direct summand of $\cK$, it follows that the natural map
\begin{equation}\label{eqn:cs-ff}
\Hom(\IC_\chi, \cF[2k]) \simto \Hom(P_\chi, \cS(\cF)\la -2k\ra)
\end{equation}
is also an isomorphism.

The next two lemmas involve the following notion from~\cite{jmw}: an object $\cF \in \Db_G(\cU)$ is said to be \emph{$*$-even} (resp.~\emph{$!$-even}) if for each unipotent class $j_C: C \hookrightarrow \cU$, the cohomology sheaves $\mathcal{H}^k(j_C^*\cF)$ (resp.~$\mathcal{H}^k(j_C^!\cF)$) vanish whenever $k$ is odd.

\begin{lem}
Let $\cF' \to \cF \to \cF'' \to$ be a distinguished triangle of $!$-even objects in $\Db_G(\cU)$.  Then the sequence
$
0 \to \cS(\cF') \to \cS(\cF) \to \cS(\cF'') \to 0
$
is exact.
\end{lem}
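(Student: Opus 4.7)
The plan is to apply $\Hom_{\Db_G(\cU)}(\cK, -)$ to the distinguished triangle and use a parity-vanishing argument to break the resulting long exact sequence into short exact sequences, one for each even degree $2k$.

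First I would recall the definition: $\gr_k \cS(\cF) = \Hom^{2k}(\cK, \cF)$. Applying $\Hom(\cK, -)$ to the triangle $\cF' \to \cF \to \cF'' \to$ produces a long exact sequence
\[
\cdots \to \Hom^{i-1}(\cK, \cF'') \to \Hom^i(\cK, \cF') \to \Hom^i(\cK, \cF) \to \Hom^i(\cK, \cF'') \to \cdots
\]
To get the desired short exact sequence of $\SW$-modules it suffices to show that $\Hom^{2k+1}(\cK, \cF'') = 0$ (and similarly for $\cF'$, $\cF$) for every $k$, so that the connecting maps all vanish and the sequence splits into pieces
\[
0 \to \Hom^{2k}(\cK, \cF') \to \Hom^{2k}(\cK, \cF) \to \Hom^{2k}(\cK, \cF'') \to 0.
\]

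The main content is therefore the parity vanishing $\Hom^{\mathrm{odd}}(\cK, \cG) = 0$ for any $!$-even $\cG$. The key ingredient I would invoke is that $\cK$ is itself $*$-even: each simple summand $\IC_\chi$ is a parity sheaf in the sense of Juteau--Mautner--Williamson, as follows from Lusztig's results on the odd-degree stalk vanishing of the IC sheaves entering the generalized Springer correspondence. Granting this, the standard parity argument applies: filtering $\cK$ by cells of the form $j_{C!}\mathcal{L}[n]$ with $n$ even (using a stratification of $\cU$ by $G$-orbits) and then using the adjunction
\[
\Hom^i(j_{C!} \mathcal{L}[n], \cG) \cong \Hom^{i+n}(\mathcal{L}, j_C^! \cG),
\]
one sees that the right-hand side vanishes when $i$ is odd, since $j_C^! \cG$ has cohomology concentrated in even degrees and $n$ is even. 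Iterating through the cells of the filtration gives the desired vanishing for $\cK$.

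The step I expect to require the most care is documenting the $*$-parity of $\cK$ and of the parity-vanishing lemma; these are essentially in the literature (Lusztig's results on the IC sheaves from the generalized Springer correspondence, together with the parity-sheaf formalism of \cite{jmw}), but pinning down the precise citation for the particular $\cK$ of interest is the only non-formal step. Once that is in hand, assembling the graded pieces $\Hom^{2k}(\cK, -)$ into $\cS$ yields the short exact sequence of $\SW$-modules.
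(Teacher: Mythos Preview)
Your proposal is correct and follows essentially the same route as the paper: apply $\Hom(\cK,-)$ to the triangle, invoke Lusztig's result that $\cK$ is $*$-even, and use the parity-vanishing lemma from \cite{jmw} to kill all odd-degree terms in the long exact sequence. The paper simply cites \cite[Remark~2.7]{jmw} for the vanishing $\Hom^{\text{odd}}(\cK,\cG)=0$ when $\cG$ is $!$-even, whereas you sketch its proof via the cell filtration; otherwise the arguments are identical.
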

\begin{proof}
According to~\cite[Theorem~24.8(a)]{lus:cs5}, the object $\cK$ is $*$-even.  As explained in~\cite[Remark~2.7]{jmw}, if $\cG$ is $!$-even, then $\Hom^k(\cK,\cG) = 0$ when $k$ is odd.  Thus, in the long exact sequence obtained by applying $\Hom(\cK,{-})$ to the given distinguished triangle, all the odd-degree terms vanish, and the result follows.
\end{proof}

\begin{lem}\label{lem:dguk}
Let $\phy$ be a phylum, and let $C_\phy \subset \cU$ be the corresponding unipotent class.  Assume that $\cF \in \Db_G(\cU)$ is $!$-even and supported on $\overline{C_\phy}$, and that the following condition holds:
\begin{equation}\label{eqn:!-good}
\parbox{4in}{For every unipotent class $j_C: C \hookrightarrow \cU$ and every irreducible local system $E$ occurring in some cohomology sheaf $\mathcal{H}^k(j_C^!\cF)$, the simple perverse sheaf $\IC(C,E)$ occurs as a direct summand of $\cK$.}
\end{equation}
Then  $\cF$ lies in the triangulated subcategory $\Db_G(\cU)_\cK \subset \Db_G(\cU)$ generated by the direct summands of $\cK$, and $\cS(\cF)$ lies in $\Db(\SW)_{\preceq \phy}$.
\end{lem}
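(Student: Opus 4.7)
My plan is to induct on $\dim \overline{C_\phy}$; the case $\cF = 0$ is trivial. In the inductive step, I will produce a distinguished triangle
\[
\cG \to \cF \to \cF' \to
\]
in which $\cG$ is a finite direct sum of objects of the form $\IC_\chi[k]$ with $[\chi] = \phy$ and $k$ even, while $\cF'$ is again $!$-even, satisfies \eqref{eqn:!-good}, and has support strictly inside $\overline{C_\phy}\setminus C_\phy$. Given such a triangle, $\cG$ is visibly in $\Db_G(\cU)_\cK$ and $\cS(\cG)$ is a direct sum of various $P_\chi\la m\ra$ with $[\chi] = \phy$, hence lies in $\Db(\SW)_{\preceq \phy}$. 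Applying the inductive hypothesis to $\cF'$ and using that both $\Db_G(\cU)_\cK$ and $\Db(\SW)_{\preceq \phy}$ are closed under extensions then completes the step.

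To construct $\cG \to \cF$, I will use the standard parity Ext-vanishing principle of \cite{jmw}: because $\cK$ is $*$-even (by \cite[Theorem~24.8(a)]{lus:cs5}) and $\cF$ is $!$-even, $\Hom^k(\IC_\chi, \cF) = 0$ for every $\chi$ and every odd $k$. Let $j : C_\phy \hookrightarrow \cU$ denote the inclusion of the open stratum in the support of $\cF$. Condition \eqref{eqn:!-good} says that every irreducible local system appearing in the cohomology of $j^!\cF$ is of the form $E$ with $\IC(C_\phy,E) \cong \IC_\chi$ for some $\chi$ with $[\chi] = \phy$. Reading off these multiplicities and parities from $j^!\cF$, I set $\cG$ to be the corresponding direct sum of shifts $\IC_\chi[k]$, and build $\cG \to \cF$ via the adjunction $\Hom(\IC_\chi[k], \cF) \to \Hom(j^*\IC_\chi[k], j^!\cF)$; the parity vanishing above guarantees that only even shifts contribute. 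By construction $j^!(\cG \to \cF)$ is an isomorphism, so the cone $\cF'$ satisfies $j^!\cF' = 0$ and has support in $\overline{C_\phy}\setminus C_\phy$.

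The main obstacle is verifying that $\cF'$ inherits the full hypotheses of the lemma on the remaining strata. For any unipotent class $j_{C'}: C' \hookrightarrow \cU$ with $C' \subsetneq \overline{C_\phy}$, the triangle yields a long exact sequence of cohomology sheaves of $j_{C'}^!$-restrictions; $!$-evenness of $\cF'$ is then automatic from that of $\cF$ and $\cG$, since the odd terms vanish on both sides. For \eqref{eqn:!-good}, the same long exact sequence shows that each irreducible local-system constituent of $\mathcal{H}^k(j_{C'}^!\cF')$ appears in either $j_{C'}^!\cF$ or $j_{C'}^!\cG$. The former is covered by \eqref{eqn:!-good} for $\cF$ itself; for the latter I must invoke that $\cK$ satisfies \eqref{eqn:!-good}, i.e., every irreducible local system appearing in $j_{C'}^!\cK$ gives a simple perverse sheaf that is a summand of $\cK$. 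This is the sole place where a nontrivial geometric input beyond what is already cited in the section is required: it follows from the explicit description of $\cK$ via the cuspidal datum $(L, C_1, E_1)$ in \cite{lus:icc}, and is the essential crutch of the whole argument. Once it is in hand, the induction closes.
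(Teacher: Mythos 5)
Your proposed inductive step does not close: the cone $\cF'$ of your map $\cG \to \cF$ need not be $!$-even, and the claim that ``$!$-evenness of $\cF'$ is then automatic from that of $\cF$ and $\cG$, since the odd terms vanish on both sides'' is false. From the long exact sequence
\[
\cdots \to \mathcal{H}^{k}(j_{C'}^!\cG) \to \mathcal{H}^{k}(j_{C'}^!\cF) \to \mathcal{H}^{k}(j_{C'}^!\cF') \to \mathcal{H}^{k+1}(j_{C'}^!\cG) \to \cdots
\]
with $k$ odd, vanishing of the first two terms only tells you that $\mathcal{H}^{k}(j_{C'}^!\cF')$ embeds into the \emph{even}-degree group $\mathcal{H}^{k+1}(j_{C'}^!\cG)$, which is nonzero in general. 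You would instead need the maps $\mathcal{H}^{k}(j_{C'}^!\cG) \to \mathcal{H}^{k}(j_{C'}^!\cF)$ to be injective in all even degrees on all deeper strata $C'$, and there is no reason for that to hold: your construction only controls the map over the open stratum. A concrete failure: take $\cF = \IC_\chi \oplus j_{C*}E[\dim C]$, where $\IC_\chi = \IC(C,E)$. This $\cF$ is $!$-even, satisfies~\eqref{eqn:!-good}, and is supported on $\overline C$; but $j_C^!\cF \cong E[\dim C]^{\oplus 2}$ forces $\cG \cong \IC_\chi^{\oplus 2}$, and any lift of an isomorphism $j_C^!\cG \simto j_C^!\cF$ is, up to an automorphism of $\cG$, the map $(\mathrm{id},\,\overline{\mathrm{id}})$ whose cone is $i_*i^!\IC_\chi[1]$---which is $!$-\emph{odd}, not $!$-even, so the inductive hypothesis no longer applies to it.

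The underlying issue is that your strategy is a parity-sheaf direct-sum decomposition, which is sound for complexes that are \emph{both} $*$-even and $!$-even (there one really can split off indecomposable parity summands), but the present $\cF$ is assumed only $!$-even. The paper's proof avoids this entirely by a different choice of triangle: it works with the attaching triangle $i_*i^!\cF \to \cF \to j_{C*}j_C^*\cF \to$, whose first term is automatically $!$-even (being a $!$-restriction of $\cF$) and lives on fewer strata, and then it disposes of $j_{C*}j_C^*\cF$ by reducing each local-system summand $j_{C*}E[\dim C]$ to the analogous attaching triangle for $\IC(C,E)$ itself, where again the problematic term $i_*i^!\IC(C,E)$ is automatically $!$-even by Lusztig's Theorem~24.8. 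In both triangles, the $!$-evenness of the term that gets fed back into the induction is structural rather than something to be verified; this is precisely the feature your triangle lacks. Your invocation of~\cite[Theorem~24.8]{lus:cs5} to control the $!$-good condition on $\cG$ is the right geometric input, but the parity bookkeeping for $\cF'$ is a genuine gap, and you would need to restructure the triangle along the paper's lines to fix it.
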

\begin{proof}
Let us say that an object in $\Db_G(\cU)$ is \emph{$!$-good} if condition~\eqref{eqn:!-good} holds for it.  We proceed by induction on the number of unipotent classes in the support of $\cF$.  Choose a class $C$ that is open in the support of $\cF$, and let $Z = \supp \cF \smallsetminus C$.  Let $i: Z \to \cU$ be the inclusion map, and form the distinguished triangle
\begin{equation}\label{eqn:dguk-dt1}
i_*i^!\cF \to \cF \to j_{C*}j_C^*\cF \to.
\end{equation}
The first term is clearly $!$-even and $!$-good, so by induction, it lies in $\Db_G(\cU)_\cK$, and $\cS(i_*i^!\cF) \in \Db(\SW)_{\preceq \phy}$.  Since applying $\cS$ to~\eqref{eqn:dguk-dt1} yields a short exact sequence, it suffices to prove that the conclusions of the lemma hold for $j_{C*}j_C^*\cF$.

As explained in~\cite[\S 2]{jmw}, the fact that $\cF$ is $!$-even implies that $j_C^*\cF \in \Db_G(C)$ is isomorphic to the direct sum of objects of the form $E[2k]$, where $E$ is an irreducible local system.  We may as well assume that $j_C^*\cF \cong E$ for some such $E$.  Moreover, since $\cF$ is $!$-good, $\IC(C,E)$ must occur in $\cK$, say as $\IC(C,E) \cong \IC_\chi$.

Consider the distinguished triangle
\begin{equation}\label{eqn:dguk-dt2}
i_*i^!\IC(C,E) \to \IC_\chi \to j_{C*}E[\dim C] \to.
\end{equation}
By~\cite[Theorem~24.8]{lus:cs5}, $\IC_\chi$ is both $!$-even and $!$-good.  (More precisely, that result asserts that each summand of $\cK$ is $*$-even and ``$*$-good''; we obtain the required facts by applying it to the Verdier dual of $\cK$.)  Therefore, the first term in~\eqref{eqn:dguk-dt2} is $!$-even and $!$-good as well, so the conclusions of the lemma hold for it by induction.  Those conclusions obviously hold for $\IC_\chi$, so they also hold for $j_{C*}E$, as desired.
\end{proof}

\begin{lem}\label{lem:tk-nabla}
We have $\nabla_\chi \in \SW\lgmod$.
\end{lem}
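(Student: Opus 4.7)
The plan is to realize $\nabla_\chi$ geometrically as $\cS(\cF_\chi)$ for an explicit object $\cF_\chi \in \Db_G(\cU)$, and then invoke the uniqueness clause of Proposition~\ref{prop:nabla-exist}. Fix the unipotent class $C \subset \cU$ corresponding to the phylum $[\chi]$, with inclusion $j_C : C \hookrightarrow \cU$ and complementary closed embedding $i : \overline{C} \setminus C \hookrightarrow \cU$, and let $E$ be the irreducible $G$-equivariant local system on $C$ with $\IC_\chi \cong \IC(C,E)$. Set $\cF_\chi := j_{C*} E[\dim C]$ and consider the standard distinguished triangle
\[
i_* i^! \IC_\chi \to \IC_\chi \to \cF_\chi \to.
\]

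First I would verify that $\cF_\chi$ is $!$-even and satisfies condition~\eqref{eqn:!-good}. Both properties hold for $\IC_\chi$ (a summand of $\cK$) by the Verdier dual of \cite[Theorem~24.8]{lus:cs5}, and an induction on the number of unipotent classes in the support, in the style of the proof of Lemma~\ref{lem:dguk}, propagates them from $\IC_\chi$ through $i_* i^! \IC_\chi$ to $\cF_\chi$. Granting this, the exact-sequence lemma preceding Lemma~\ref{lem:dguk} yields a short exact sequence
\[
0 \to \cS(i_* i^! \IC_\chi) \to P_\chi \xrightarrow{\,s\,} \cS(\cF_\chi) \to 0
\]
in $\SW\lgmod$, while Lemma~\ref{lem:dguk} places $\cS(i_* i^! \IC_\chi)$ in $\Db(\SW)_{\prec [\chi]}$. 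Consequently the cone of $s$ lies in $\Db(\SW)_{\prec [\chi]}$, which is precisely condition~\eqref{it:nabla-cone} of Proposition~\ref{prop:nabla-exist}.

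Next I would verify condition~\eqref{it:nabla-orth}: that $\Hom(M, \cS(\cF_\chi)) = 0$ for every $M \in \Dm(\SW)_{\prec [\chi]}$ or $\Dp(\SW)_{\prec [\chi]}$. Representing $M$ as a complex of projectives $P_\psi\la n\ra$ with $\psi \prec \chi$, and recalling that $\cS(\cF_\chi)$ is concentrated in a single cohomological degree, any chain map into it is controlled by its degree-$0$ component, which by~\eqref{eqn:cs-ff} factors through the $\psi$-isotypic parts of $\cS(\cF_\chi)$ for various $\psi \prec \chi$. Each such isotypic is computed by $\mathrm{Hom}^\bullet(\IC_\psi, \cF_\chi) \cong \mathrm{Hom}^\bullet(j_C^* \IC_\psi, E[\dim C])$ via adjunction; but $\psi \prec \chi$ forces $\overline{\supp \IC_\psi} \subsetneq \overline{C}$, so $C \cap \supp \IC_\psi = \emptyset$ and $j_C^* \IC_\psi = 0$. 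The uniqueness part of Proposition~\ref{prop:nabla-exist} then identifies $\cS(\cF_\chi) \cong \nabla_\chi$, and since $\cS(\cF_\chi)$ lies in $\SW\lgmod$ by construction, the lemma follows.

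The main obstacle is the parity/goodness input in the first verification above. Everything downstream is formal: given $!$-evenness and~\eqref{eqn:!-good} for $\cF_\chi$, Lemma~\ref{lem:dguk} and the adjunction-based support vanishing $j_C^*\IC_\psi = 0$ do the rest. The genuine geometric content rests on Lusztig's cleanness results for the cuspidal data underlying $\cK$.
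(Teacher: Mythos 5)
Your proposal is correct and follows essentially the same route as the paper: both use the distinguished triangle $i_*i^!\IC_\chi \to \IC_\chi \to j_{C*}E[\dim C] \to$, apply $\cS$ via the $!$-even exact-sequence lemma to get $0 \to \cS(i_*i^!\IC_\chi) \to P_\chi \to \cS(j_{C*}E[\dim C]) \to 0$, invoke Lemma~\ref{lem:dguk} to place the kernel in $\Db(\SW)_{\prec [\chi]}$, and identify $\cS(j_{C*}E[\dim C]) \cong \nabla_\chi$ via the uniqueness in Proposition~\ref{prop:nabla-exist} together with the adjunction/support vanishing $\Hom^\bullet(\IC_\psi, j_{C*}E[\dim C]) = 0$ for $\psi \prec \chi$. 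You spell out the adjunction step slightly more explicitly than the paper does, but the structure and all key ingredients are the same.
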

\begin{proof}
Suppose $\IC_\chi \cong \IC(C,E)$.  Let $i : \overline{C} \smallsetminus C \hookrightarrow \cU$ be the inclusion map.  Recall, as in the proof of Lemma~\ref{lem:dguk}, that $i_*i^!\IC(C,E)$ is $!$-even and satisfies~\eqref{eqn:!-good}.  Let $N = \cS(i_*i^!\IC(C,E))$ and $K = \cS(j_{C*}E[\dim C])$.  The distinguished triangle $i_*i^!\IC(C,E) \to \IC_\chi \to j_{C*}E[\dim C] \to$ gives rise to a short exact sequence
\[
0 \to N \to P_\chi \to K \to 0.
\]
By Lemma~\ref{lem:dguk}, $N$ lies in $\Db(\SW)_{\prec [\chi]}$.  We claim that $K \cong \nabla_\chi$.  By the uniqueness asserted in Proposition~\ref{prop:nabla-exist}, it suffices to check that $\Hom(M,K) = 0$ for $M \in \Dm(\SW)_{\prec [\chi]}$ or $\Dp(\SW)_{\prec [\chi]}$.  Since $K$ is a bounded complex, we may restrict our attention to $M \in \Db(\SW)_{\prec [\chi]}$, and indeed to $M$ of the form $P_\psi\la n\ra$ with $\psi \prec \chi$.  Since $K$ lies in $\SW\lgmod$ and $P_\psi$ is projective, we have that $\uHom^i(P_\psi, K) = 0$ for $i \ne 0$.  For $i = 0$, we see from~\eqref{eqn:cs-ff} that $\gr_k \Hom(P_\psi, K) \cong \Hom^{2k}(\IC_\psi, j_{C*}E[\dim C]) = 0$.
\end{proof}

Referring to the construction of $\nabla_\chi$ in Proposition~\ref{prop:nabla-exist}, it can be seen that when it lies in $\SW\lgmod$, it admits the following explicit description:
\begin{equation}\label{eqn:tk-formula}
\nabla_\chi =
P_\chi / \Bigg(\sum_{\substack{g \in \Hom(P_\psi\la n\ra, P_\chi)\\
\psi \prec \chi, \ n > 0}} \im g\Bigg).
\end{equation}
In~\cite{kat:hsgp}, this module is denoted $\tK_\chi$ .  We may also form the quotient
\begin{equation}\label{eqn:bnabla-defn}
\bnabla_\chi =
\nabla_\chi / \Bigg(\sum_{\substack{g \in \Hom(\nabla_\psi\la n\ra, \nabla_\chi)\\
\psi \sim \chi, \ n > 0}} \im g\Bigg)
\cong
P_\chi / \Bigg(\sum_{\substack{g \in \Hom(P_\psi\la n\ra, P_\chi)\\
\psi \precsim \chi, \ n > 0}} \im g\Bigg).
\end{equation}
Modules of this form are called \emph{traces} in~\cite{kat:hsgp} and are denoted by $K_\chi$ or $K_\chi^{\mathbf{c}}$.

The following theorem summarizes the properties of the $\nabla_\chi$ and the $\bnabla_\chi$ in this situation.  Part~\eqref{it:tk-nabla} was contained in Lemma~\ref{lem:tk-nabla}, and parts~\eqref{it:nabla-filt} and~\eqref{it:free-filt} are restatements of~\cite[Corollary~3.6]{kat:hsgp} and~\cite[Theorem~4.1]{kat:asea}, respectively.  (The latter result is stated in the case where only trivial local systems arise.  However, it is straightforward to adapt Kato's arguments to drop this assumption.)

\begin{thm}[Kato]\label{thm:kato}
Assume that $\precsim$ arises from a generalized Springer correspondence.  Then we have:
\begin{enumerate}
\item Each $\nabla_\chi$ lies in $\SW\lgmod$.\label{it:tk-nabla}
\item Each $\nabla_\chi$ admits a filtration whose subquotients are of the form $\bnabla_\psi\la n\ra$ with $\psi \sim \chi$.\label{it:nabla-filt}
\item Each $P_\chi$ admits a filtration whose subquotients are of the form $\nabla_\psi\la n\ra$ with $\psi \precsim \chi$.\label{it:free-filt}
\end{enumerate}
\end{thm}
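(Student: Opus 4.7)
The plan is to dispatch each of the three parts by appealing either to material developed earlier in the paper or to results from Kato's papers \cite{kat:hsgp, kat:asea}, after establishing a careful dictionary between the two notational conventions. For part \eqref{it:tk-nabla}, the work has already been done: Lemma~\ref{lem:tk-nabla} gives $\nabla_\chi \in \SW\lgmod$ via the geometric identification $\nabla_\chi \cong \cS(j_{C*}E[\dim C])$ and the $!$-even analysis of Lemma~\ref{lem:dguk}. The explicit description \eqref{eqn:tk-formula} then matches $\nabla_\chi$ with Kato's module $\tK_\chi$ from \cite{kat:hsgp}, which is the form in which his results about filtrations are stated.

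For part \eqref{it:nabla-filt}, the plan is to invoke \cite[Corollary~3.6]{kat:hsgp}, which provides a filtration of $\tK_\chi$ whose subquotients are Kato's trace modules $K_\psi^{\mathbf{c}}$ with $\psi \sim \chi$. The only genuine task is to verify that $\bnabla_\chi$, as introduced in \eqref{eqn:bnabla-defn}, coincides with $K_\chi^{\mathbf{c}}$. This is a direct comparison of presentations: both modules arise as $P_\chi$ modulo the sum of images of maps from $P_\psi\la n\ra$ with $\psi \precsim \chi$ and $n > 0$, and the two equivalent quotient formulas in \eqref{eqn:bnabla-defn}---modding out first by lower-phylum contributions and then by positively shifted same-phylum morphisms, versus modding out by all such maps simultaneously---make the identification transparent.

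For part \eqref{it:free-filt}, the plan is to cite \cite[Theorem~4.1]{kat:asea}. That result is phrased under the hypothesis that only trivial local systems occur in $\cK$, so the remaining step is to confirm that its proof extends to the general situation. This is the principal obstacle of the entire theorem, and it amounts to rereading Kato's induction on unipotent classes and verifying that each geometric input---principally the parity vanishing and cleanness properties of the simple perverse sheaves $\IC_\chi$ summarized in \cite[Theorem~24.8]{lus:cs5}, together with the structural identifications via $\cS$ used above---continues to hold when the local systems are allowed to be nontrivial. Since Lusztig's cleanness and $*$-evenness statements are proved in that generality, no substantive new obstruction should arise; the adaptation is essentially a line-by-line verification rather than a new argument.
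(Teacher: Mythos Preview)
Your proposal is correct and matches the paper's own treatment essentially line for line: part~\eqref{it:tk-nabla} is Lemma~\ref{lem:tk-nabla}, part~\eqref{it:nabla-filt} is \cite[Corollary~3.6]{kat:hsgp} via the identification $\bnabla_\chi = K_\chi^{\mathbf{c}}$ from~\eqref{eqn:bnabla-defn}, and part~\eqref{it:free-filt} is \cite[Theorem~4.1]{kat:asea} with the same remark that the restriction to trivial local systems is inessential. The paper records this as a brief attribution paragraph rather than a formal proof, so your more detailed account of the dictionary and the adaptation is if anything more thorough than what appears there.
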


\section{Module categories associated to a phylum}
\label{sect:modcat}

For the remainder of the paper, we will treat Theorem~\ref{thm:kato} as a ``black box.''  To be more precise, the proofs in this section and the next avoid geometric arguments, and are written so as to be able to accommodate arbitrary complex reflection groups and arbitrary preorders.  Since the proofs make use of Theorem~\ref{thm:kato}, the results below are, for the moment, only known to hold when $W$ and $\precsim$ come from a generalized Springer correspondence.  However, if, in the future, Theorem~\ref{thm:kato} is shown to hold for other $W$ and $\precsim$, then the results below would automatically hold in those new cases as well.

In this section, we study various special classes of modules associated to a phylum, as well as certain related abelian and triangulated categories.  We will require the following notions.

\begin{defn}\label{defn:phy-good}
Let $\phy$ be a phylum.  An object of $\SW\lgmod$ is said to be:
\begin{itemize}
\item \emph{$\phy$-good} if it admits a (possibly infinite) filtration whose subquotients are various $\bnabla_\chi\la n\ra$ with $\chi \in \phy$.
\item \emph{$\phy$-quasicostandard} if it is $\phy$-good and finite-dimensional.
\item \emph{$\phy$-projective} if it is a direct sum of various $\nabla_\chi\la n\ra$ with $\chi \in \phy$.
\item \emph{$\phy$-presentable} if it is the cokernel of a map between $\phy$-projective modules.
\end{itemize}
\end{defn}

The following lemma tells us that ``$\phy$-quasicostandard'' is not an empty concept.

\begin{lem}[{Kato~\cite[Lemma~2.15]{kat:hsgp}}]\label{lem:bnabla-findim}
Each $\bnabla_\chi$ is a finite-dimensional $\SW$-module.  As a $W$-representation, $\bnabla_\chi$ contains a copy of $L_\chi$ with multiplicity~$1$ and various other $L_\theta\la m\ra$ with $\theta \succ \chi$ and $m > 0$.
\end{lem}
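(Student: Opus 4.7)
My plan is to analyze the explicit presentation $\bnabla_\chi = P_\chi / R$ from \eqref{eqn:bnabla-defn}, where $R \subset P_\chi$ is the $\SW$-submodule generated by the images of all $\SW$-linear maps $P_\psi\la n\ra \to P_\chi$ with $\psi \precsim \chi$ and $n > 0$.

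First, since every such generator of $R$ lies in grade ${\ge}n > 0$, the grade-$0$ part of $R$ vanishes. As $(P_\chi)_0 = L_\chi$, this immediately yields $(\bnabla_\chi)_0 = L_\chi$, supplying the claimed multiplicity-one copy of $L_\chi$ in grade zero.

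Next, for $k > 0$ I will argue that no $L_\psi$-isotypic component with $\psi \precsim \chi$ survives in $(\bnabla_\chi)_k$. Given any $W$-equivariant inclusion $L_\psi \hookrightarrow (P_\chi)_k$, the freeness of $P_\psi\la k\ra = L_\psi \otimes \Sfh$ on $L_\psi$ placed in grade $k$ extends this inclusion to an $\SW$-linear map $P_\psi\la k\ra \to P_\chi$. Since $\psi \precsim \chi$ and $n = k > 0$, its image lies in $R$, and that image contains the original copy of $L_\psi$. Summing over all such inclusions absorbs the entire $L_\psi$-isotypic component of $(P_\chi)_k$ into $R$, so only summands $L_\theta\la k\ra$ with $\theta \succ \chi$ survive in $(\bnabla_\chi)_k$---matching the stated description of $\bnabla_\chi$ as a $W$-representation.

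The main obstacle is the finite-dimensionality assertion, and my approach relies on a single key observation: for any $f \in \Sfh^W$ of positive degree, multiplication by $f$ is an $\SW$-linear endomorphism of $P_\chi$ of grade $\deg f$ (it is $W$-equivariant precisely because $f$ is $W$-invariant). This is exactly a map $P_\chi\la \deg f\ra \to P_\chi$ of the kind counted in the definition of $R$ (taking $\psi = \chi$ and $n = \deg f > 0$), so $f \cdot P_\chi \subset R$. Letting $\Sfh^W_+$ denote the positive-degree part of the invariants, summation gives $\Sfh^W_+ \cdot P_\chi \subset R$. By Chevalley--Shephard--Todd, the coinvariant algebra $\Sfh / \Sfh^W_+ \cdot \Sfh$ has dimension $|W|$, whence $\bnabla_\chi$ is a quotient of $P_\chi / \Sfh^W_+ \cdot P_\chi \cong L_\chi \otimes (\Sfh / \Sfh^W_+ \cdot \Sfh)$, and is therefore finite-dimensional.
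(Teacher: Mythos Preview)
Your proof is correct and is a faithful expansion of the paper's one-line argument (``immediate from the definition of the $\bnabla_\chi$''). You have made explicit the two ingredients that the paper leaves to the reader: that the quotient kills every $L_\psi$-isotypic piece with $\psi \precsim \chi$ in positive grades (yielding the $W$-representation description), and that multiplication by positive-degree invariants already lies among the defining relations, so that $\bnabla_\chi$ is a quotient of $L_\chi \otimes (\Sfh/\Sfh^W_+\Sfh)$ and Chevalley--Shephard--Todd gives finite-dimensionality.
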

\begin{proof}
This is immediate from the definition of the $\bnabla_\chi$.
\end{proof}

The term ``$\phy$-projective'' is justified by the fact that such a module is a projective object in the following Serre subcategory of $\SW\lgmod$ (see~\cite[Corollary~3.8]{kat:hsgp}):
\[
\SW\lgmod_{\not\prec \phy} = \left\{ M \in \SW\lgmod \;\Big|\, 
\begin{array}{c}
\text{$M$ contains no $W$-stable subspace} \\
\text{isomorphic to $L_\theta\la m\ra$ if $[\theta] \prec f$}
\end{array} \right\}.
\]
We will also study the additive categories
\begin{align*}
\cQ_{\phy} &= \{ M \in \SW\lgmod \mid \text{$M$ is $\phy$-quasicostandard} \}, \\
\cP_{\phy} &= \{ M \in \SW\lgmod \mid \text{$M$ is $\phy$-presentable} \},
\end{align*}
as well as the triangulated categories
\begin{gather*}
\Dfd(\SW)_{\preceq \phy} = \Db(\SW)_{\preceq \phy} \cap \Dfd(\SW), \\\Dfd(\SW)_{\phy} = \Db(\SW)_{\phy} \cap \Dfd(\SW), \qquad
\Dfd(\SW)^{\phy} = \Db(\SW)^{\phy} \cap \Dfd(\SW). 
\end{gather*}

\begin{lem}\label{lem:nabla-filt-ses}
For any $\chi \in \Irr(W)$ and any $n \ge 0$, there is a short exact sequence
\[
0 \to A_n \to \nabla_\chi \to Y_n \to 0
\]
where $A_n$ is $[\chi]$-good with grades${}\ge n$, and $Y_n$ is $[\chi]$-quasicostandard.  In particular, $\bnabla_\chi$ is a quotient of $Y_n$, and for any object $M \in \Db(\SW)$, we have
\[
\RHom(M, \nabla_\chi) \cong \RHom(M, Y_n) \qquad\text{for $n \gg 0$.}
\]
\end{lem}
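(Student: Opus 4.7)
The plan is to construct $A_n$ and $Y_n$ by splitting Kato's filtration of $\nabla_\chi$ into a ``high-grade'' and a ``low-grade'' portion. I first invoke Theorem~\ref{thm:kato}\eqref{it:nabla-filt} to fix an exhaustive filtration $0 = F_0 \subset F_1 \subset \cdots$ of $\nabla_\chi$ with $F_i/F_{i-1} \cong \bnabla_{\psi_i}\la m_i\ra$, $\psi_i \sim \chi$. Because $\nabla_\chi$ is finitely generated and $\Sfh_0 = \C$, each $\gr_k \nabla_\chi$ is finite-dimensional, and because each subquotient $\bnabla_{\psi_i}\la m_i\ra$ contributes a copy of $L_{\psi_i}$ to grade $m_i$ (Lemma~\ref{lem:bnabla-findim}), the set $\{i : m_i < n\}$ is finite.

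The main step, which is also the main technical obstacle, is a reordering: the filtration can be replaced by one with the same multiset of subquotients $\{(\psi_i, m_i)\}$, but arranged so that all subquotients with $m_i \ge n$ precede those with $m_i < n$. This reduces to a pairwise swap assertion: for any short exact sequence $0 \to \bnabla_\alpha\la a\ra \to E \to \bnabla_\beta\la b\ra \to 0$ with $\alpha, \beta \sim \chi$ and $b > a$, there is a subobject $B' \subset E$ with $B' \cong \bnabla_\beta\la b\ra$ and $E/B' \cong \bnabla_\alpha\la a\ra$. The key input is Lemma~\ref{lem:bnabla-findim}: since $\alpha \sim \beta$, the representation $L_\beta$ never occurs in $\bnabla_\alpha$ (in positive grade only $L_\theta$ with $\theta \succ \alpha$ appears, and $\beta \sim \alpha$ precludes $\beta \succ \alpha$). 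Thus the $L_\beta$-isotypic of $\gr_b E$ is canonically identified with the grade-$b$ generator of $\bnabla_\beta\la b\ra$; taking the $\SW$-submodule $B'$ it generates gives the desired subobject. The most delicate point, and the real content of the argument, is then to verify $B' \cong \bnabla_\beta\la b\ra$: this amounts to choosing a lift $P_\beta\la b\ra \to E$ that annihilates the kernel of $P_\beta\la b\ra \twoheadrightarrow \bnabla_\beta\la b\ra$ given by~\eqref{eqn:bnabla-defn}, which is an extension problem governed by $\uExt^{1,\,b-a}(\bnabla_\beta, \bnabla_\alpha)$.

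With the reordering established, let $A_n$ be the union of the initial ``$m_i \ge n$'' segment of the reordered filtration, and $Y_n = \nabla_\chi/A_n$. By construction $A_n$ is $[\chi]$-good with grades${}\ge n$, while $Y_n$ is a finite iterated extension of various $\bnabla_\psi\la m\ra$ with $m < n$, hence $[\chi]$-good and finite-dimensional, i.e., $[\chi]$-quasicostandard.

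For the two remaining assertions: the natural surjection $\nabla_\chi \to \bnabla_\chi$ annihilates $A_n$ once $n$ exceeds the (finite) top grade of $\bnabla_\chi$, since any graded map from a module with grades${}\ge n$ to one with strictly smaller grades is zero; thus $\bnabla_\chi$ is a quotient of $Y_n$ for such $n$. For the derived-$\Hom$ isomorphism, if $M \in \Db(\SW)$ has grades contained in $(-\infty, b]$, then for $n > b$ every graded map $M \to A_n[i]$ vanishes on grade grounds, so $\RHom(M, A_n) = 0$, and the long exact sequence applied to $0 \to A_n \to \nabla_\chi \to Y_n \to 0$ yields $\RHom(M, \nabla_\chi) \simto \RHom(M, Y_n)$.
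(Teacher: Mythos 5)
Your approach follows the paper's outline, but you have introduced a step---and left an acknowledged gap in it---that the paper bypasses. The paper reads the first assertion off directly from Theorem~\ref{thm:kato}\eqref{it:nabla-filt} together with the finiteness observation: Kato's filtration is already grade-compatible, so $A_n$ is simply the appropriate piece of it, and no reordering is required.

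The swap assertion on which your reordering rests is in fact true, but you explicitly decline to prove its key step, that $B' \cong \bnabla_\beta\la b\ra$, saying only that it is ``an extension problem governed by $\uExt^{1,\,b-a}(\bnabla_\beta,\bnabla_\alpha)$.'' This is a genuine gap, and the tempting shortcut of citing Proposition~\ref{prop:qexc}\eqref{it:qexc3} for that vanishing is unavailable, since that result depends (through Lemmas~\ref{lem:nab-bnab-eq} and~\ref{lem:bnabla-admis}) on the present lemma. The gap can nonetheless be filled elementarily: lift $P_\beta\la b\ra \twoheadrightarrow \bnabla_\beta\la b\ra$ to a map $P_\beta\la b\ra \to E$. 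Any generator $P_\psi\la m+b\ra \to P_\beta\la b\ra$ (with $\psi \precsim \beta$, $m>0$) of the kernel in~\eqref{eqn:bnabla-defn} then composes to a map into $E$ that lands in $\bnabla_\alpha\la a\ra$, giving an element of $\Hom_W(L_\psi, \gr_{b+m-a}\bnabla_\alpha)$; since $b+m-a>0$, Lemma~\ref{lem:bnabla-findim} forces this group to vanish, because positive grades of $\bnabla_\alpha$ carry only $L_\theta$ with $\theta \succ \alpha \sim \beta \succsim \psi$. Hence the lift factors through $\bnabla_\beta\la b\ra$ and the extension splits. Your treatment of the finiteness of $\{i : m_i < n\}$, of the quotient map to $\bnabla_\chi$, and of the $\RHom$ isomorphism agrees with the paper.
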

\begin{proof}
The first part of this lemma is just a restatement of Theorem~\ref{thm:kato}\eqref{it:nabla-filt}, together with the observation that for fixed $k$, only finitely many subquotients of $\nabla_\chi$ can have the form $\bnabla_\psi\la k\ra$.  Next, given $M \in \Db(\SW)$, choose some bounded complex of projectives that represents $M$, and let $n$ be large enough that each term of that complex is generated in grades${}< n$.  There is no nonzero morphism from such a complex to any object with grades${}\ge n$.  In particular, $\RHom(M, A_n) = 0$, and the last assertion follows.
\end{proof}

\begin{lem}\label{lem:phy-ab}
For any phylum $\phy$, $\cP_\phy$ is an abelian category with enough projectives.  The projective objects in $\cP_\phy$ are precisely the $\phy$-projective $\SW$-modules, and the simple objects in $\cP_{\phy}$ are the $\bnabla_\chi\la n\ra$ with $\chi \in \phy$.

Moreover, an $\SW$-module $M$ is $\phy$-quasicostandard if and only if it is $\phy$-present\-able and finite-dimensional.  In particular, $\cQ_{\phy}$ is a Serre subcategory of $\cP_{\phy}$.
\end{lem}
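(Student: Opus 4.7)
The argument rests on Kato's Corollary~3.8 in~\cite{kat:hsgp}, which asserts that each $\nabla_\chi\la n\ra$ (for $\chi \in \phy$) is a projective object in the Serre subcategory $\SW\lgmod_{\not\prec\phy}$. Combining Theorem~\ref{thm:kato}(2) with Lemma~\ref{lem:bnabla-findim}, every $\phy$-projective lies in $\SW\lgmod_{\not\prec\phy}$, and hence so does every $\phy$-presentable module. Closure of $\cP_\phy$ under cokernels is then immediate: for $f\colon M \to N$ in $\cP_\phy$ with presentations $A' \to A \twoheadrightarrow M$ and $B' \to B \twoheadrightarrow N$ by $\phy$-projectives, projectivity of $A$ inside $\SW\lgmod_{\not\prec\phy}$ lifts the composite $A \twoheadrightarrow M \xrightarrow{f} N$ through $B \twoheadrightarrow N$ to some $\tilde f \colon A \to B$, and then $\coker f$ is presented by $B' \oplus A \to B$.

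The main obstacle is closure under kernels. My plan is to prove the auxiliary characterization
\[
\cP_\phy = \{ M \in \SW\lgmod : M \text{ is } \phy\text{-good} \},
\]
using that every object of $\SW\lgmod$ is automatically finitely generated. For ``$\supseteq$'', I would proceed by grade-wise induction (enabled by Remark~\ref{rmk:nabla-exist}): truncate a bounded-below $\bnabla$-filtration of $M$, lift each surjection $\nabla_{\psi_i}\la n_i\ra \twoheadrightarrow \bnabla_{\psi_i}\la n_i\ra$ through the filtration via projectivity in $\SW\lgmod_{\not\prec\phy}$, and assemble these into a surjection from a $\phy$-projective $A \twoheadrightarrow M$; then invoke Corollary~\ref{cor:nabla-ext} ($\Ext^1$-vanishing within a phylum) to show the kernel is again $\phy$-good, and apply the same procedure once more to obtain a two-term $\phy$-projective presentation. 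The ``$\subseteq$'' direction follows because $\phy$-projectives are themselves $\phy$-good (Theorem~\ref{thm:kato}(2)) and the class of $\phy$-good modules is preserved under taking cokernels between $\phy$-projectives---a standard highest-weight-style argument resting once more on Corollary~\ref{cor:nabla-ext}. Kernels of morphisms in $\cP_\phy$ are then $\phy$-good by the analogous argument, so $\cP_\phy$ is abelian.

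The remaining assertions follow. Each $\phy$-projective is projective in $\cP_\phy$ by the same lifting argument; conversely any projective $P \in \cP_\phy$ splits off the $\phy$-projective surjecting onto it, hence is $\phy$-projective itself, by Krull--Schmidt applied to the indecomposable summands $\nabla_\chi\la n\ra$. Each $\bnabla_\chi\la n\ra$ lies in $\cP_\phy$ by the characterization and is simple there, for Lemma~\ref{lem:bnabla-findim} shows that every proper $\SW$-submodule has $W$-content strictly above $\phy$, leaving no room for a nonzero $\phy$-good sub. Conversely every simple of $\cP_\phy$ is a simple quotient of some $\nabla_\chi\la n\ra$, hence equals $\bnabla_\chi\la n\ra$. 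Finally, $\cQ_\phy$ is a Serre subcategory of $\cP_\phy$ because finite-dimensionality is preserved under subobjects, quotients, and extensions within $\cP_\phy$, which simultaneously yields the claimed equivalence $\phy\text{-quasicostandard} = \phy\text{-presentable} + \text{finite-dimensional}$.
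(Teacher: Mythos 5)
Your proposal takes a genuinely different route from the paper's, and unfortunately it has a real gap. The paper establishes abelianity of $\cP_\phy$ in one stroke by exhibiting an equivalence $e = \uHom(F,{-})\colon \cP_\phy \simto \Gamma\lgmod$, where $F = \bigoplus_{\psi \in \phy}\nabla_\psi$ and $\Gamma = \uHom(F,F)^\op$, via a variation of the Auslander--Reiten--Smal{\o} Morita-type theorem for presentable subcategories. That equivalence immediately gives abelianity, enough projectives, and the identification of simple objects, with no need to manipulate $\bnabla$-filtrations. You instead try to prove closure under kernels and cokernels directly by first identifying $\cP_\phy$ with the class of $\phy$-good modules. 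Your cokernel-closure argument (lifting $A \twoheadrightarrow M \to N$ through $B \twoheadrightarrow N$ and presenting $\coker f$ by $B' \oplus A \to B$) is correct as far as it goes. But the two pivotal claims in your plan are not established.

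First, the assertion that the cokernel of a map between $\phy$-projectives is $\phy$-good, which you describe as ``a standard highest-weight-style argument resting on Corollary~\ref{cor:nabla-ext},'' does not follow from that corollary: Corollary~\ref{cor:nabla-ext} concerns $\Ext$-vanishing among the $\nabla_\chi$, not the $\bnabla_\chi$, and in highest-weight theory cokernels of maps between costandardly-filtered objects are not in general costandardly-filtered. The $\bnabla_\chi\la n\ra$ here play the role of \emph{simple} objects of $\cP_\phy$, not costandards, so the relevant statement is a Jordan--H\"older-type one; for objects of infinite length (as occur in $\cP_\phy$) such a statement needs the graded noetherian structure of the endomorphism ring $\Gamma$, which is exactly what the ARS equivalence packages up. Second, and more seriously, your remark that ``kernels of morphisms in $\cP_\phy$ are then $\phy$-good by the analogous argument'' is vague and is precisely the hardest step; kernel closure is the subtle point in any such argument, and nothing in your sketch addresses why the kernel of a map between $\phy$-good modules should again be $\phy$-good. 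In short, the ARS Morita theorem is doing genuine work in the paper's proof, and your outline has not supplied a substitute for it.
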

\begin{proof}
We proceed in several steps.  The first two take place in $\SW\lgmod_{\not\prec \phy}$, and the later ones in $\cP_{\phy}$.

{\it Step 1. The $\bnabla_\chi$ are $\phy$-presentable.}  Let $M$ be the kernel of the obvious map $\nabla_\chi \to \bnabla_\chi$.  From Theorem~\ref{thm:kato}\eqref{it:nabla-filt} or Lemma~\ref{lem:nabla-filt-ses}, we know that $M$ is $\phy$-good.  In particular, $M$ is generated by its subspaces that are isomorphic (as $W$-representations) to various $L_\psi\la n\ra$ with $\psi \in \phy$.  Indeed, $M$ is generated by a finite number of such subspaces, so $M$ is a quotient of some $\phy$-projective module.  The claim follows.

{\it Step 2. The class of $\phy$-presentable modules is stable under extensions.}  Let $0 \to A \to B \to C \to 0$ be a short exact sequence in $\SW\lgmod$.  We wish to show that if $A$ and $C$ are $\phy$-presentable, then $B$ is as well.  In fact, a nine-lemma argument shows that it suffices to prove the following weaker statement: if $A$ and $C$ are quotients of $\phy$-projective modules, then $B$ is as well.  This latter statement is immediate from the observation that $A$ and $C$ necessarily lie in $\SW\lgmod_{\not\prec\phy}$, and so $B$ does as well.  (Note, however, that not every quotient of an $\phy$-projective module is $\phy$-presentable.)

{\it Step 3. $\cP_{\phy}$ is an abelian category with projectives as described above.}  Let $F = \bigoplus_{\psi \in  \phy} \nabla_\psi$, and consider the graded ring $\Gamma = \uHom(F,F)^\op$.  We have a functor $e: \cP_{\phy} \to \Gamma\lgmod$ given by $e(M) = \uHom(F,M)$.  A variation of~\cite[Propositions~2.1 and~2.5]{ars:rtaa} shows that $e$ is an equivalence of categories that takes $\phy$-projective modules to projective $\Gamma$-modules.  In particular, $\cP_{\phy}$ is naturally an abelian category with enough projectives.

{\it Step 4. The simple objects in $\cP_{\phy}$ are precisely the $\bnabla_\chi\la n\ra$.}  Abstractly, the isomorphism classes of simple objects in $\cP_{\phy}$ are in bijection with those of the indecomposable projectives.  Let $\Sigma_{\chi,n} \in \cP_{\phy}$ be the unique simple quotient of $\nabla_\chi\la n\ra$.  This object is characterized by the property that
\[
\text{for $\psi \in \phy$,} \qquad
\Hom(\nabla_\psi\la m\ra, \Sigma_{\chi,n}) = 
\begin{cases}
\C & \text{if $\psi = \chi$ and $m = n$,} \\
0 & \text{otherwise}
\end{cases} 
\]
But $\bnabla_\chi\la n\ra$ lies in $\cP_{\phy}$ and also has this property.  (This is a special case of Lemma~\ref{lem:nab-bnab-eq} below.)  We conclude that $\Sigma_{\chi,n} \cong \bnabla_\chi\la n\ra$.

{\it Step 5.  Characterization of $\phy$-quasicostandard modules.}  It follows from Step~2 that every $\phy$-quasicostandard module is $\phy$-presentable, and they are obviously finite-dimensional.  Conversely, a finite-dimensional $\phy$-presentable module must have finite length as an object of $\cP_{\phy}$.  From our description of simple objects therein, we see that such a module must be $\phy$-quasicostandard.
\end{proof}

\begin{rmk}\label{rmk:phy-fd}
Under the equivalence $e: \cP_{\phy} \simto \Gamma\lgmod$ (with the notation of the preceding proof), the category $\cQ_{\phy}$ corresponds to the category of finite-dimensional $\Gamma$-modules.
\end{rmk}

Of course, any complex of $\phy$-presentable or $\phy$-quasicostandard modules can be regarded simply as a complex of $\SW$-modules, so there are obvious functors
\[
\rho: \Db\cP_{\phy} \to \Db(\SW) \qquad\text{and}\qquad \rho: \Db\cQ_{\phy} \to \Db(\SW).
\]

\begin{prop}\label{prop:phy-ff}
The functor $\rho: \Db\cP_{\phy} \to \Db(\SW)$ is fully faithful.
\end{prop}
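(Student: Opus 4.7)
The plan is to follow the template of the proof of Lemma~\ref{lem:fd}: invoke \cite[Proposition~3.1.16]{bbd} to reduce the statement to showing that for every $A, B \in \cP_\phy$ and every $i \geq 0$, the natural map
\[
\Ext^i_{\cP_\phy}(A, B) \to \Ext^i_{\SW\lgmod}(A, B)
\]
is an isomorphism. The case $i = 0$ is immediate from the fullness of the embedding.

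For $i \geq 1$, I would compute both sides by means of $\phy$-projective resolutions. By Lemma~\ref{lem:phy-ab}, $\cP_\phy$ has enough projectives, so $A$ admits such a resolution $F^\bullet \to A$ in $\cP_\phy$. Using fullness, $\Ext^i_{\cP_\phy}(A, B)$ is the $i$-th cohomology of $\uHom_{\SW}(F^\bullet, B)$. The argument then reduces to two auxiliary claims: (a) the resolution $F^\bullet \to A$ is exact in $\SW\lgmod$ (equivalently, the inclusion $\cP_\phy \hookrightarrow \SW\lgmod$ is exact, which is in any case needed for $\rho$ to be well defined on derived categories); and (b) $\uHom^j_{\SW}(\nabla_\chi\la n\ra, B) = 0$ for all $j > 0$, $\chi \in \phy$, $n \in \Z$, and $B \in \cP_\phy$. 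Given both, the hypercohomology spectral sequence attached to $F^\bullet$ collapses and identifies $\Ext^i_{\SW\lgmod}(A, B)$ with the cohomology of the same complex.

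Claim (b) in fact follows from (a) applied to $B$: once $B$ has a $\phy$-projective resolution in $\SW\lgmod$, the spectral sequence computing $\uRHom_{\SW}(\nabla_\chi, B)$ from that resolution collapses by Corollary~\ref{cor:nabla-ext}, which supplies the requisite vanishing $\uHom^j_{\SW}(\nabla_\chi\la n\ra, \nabla_\psi\la m\ra) = 0$ for $\chi, \psi \in \phy$ and $j > 0$.

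The main obstacle is therefore (a). Concretely, given a short exact sequence $0 \to A \to B \to C \to 0$ in $\cP_\phy$, I would argue as follows. Surjectivity of $B \to C$ in $\SW\lgmod$ is easy: since $C$ is $\phy$-presentable, it admits a surjection $Q \twoheadrightarrow C$ from a $\phy$-projective, and by projectivity of $Q$ in $\cP_\phy$ this map factors as $Q \to B \to C$, whence $B \to C$ is surjective. The delicate step is identifying $A$ with $\ker_{\SW}(B \to C)$. The key inputs are that every object of $\cP_\phy$ lies in the Serre subcategory $\SW\lgmod_{\not\prec \phy}$ (constraining the $W$-component content of any kernel or cokernel), that the $\nabla_\chi$ are projective in $\cP_\phy$, and that Theorem~\ref{thm:kato} controls the appearance of $L_\theta\la m\ra$ with $[\theta] \succ \phy$ inside objects of $\cP_\phy$. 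Combining these, any discrepancy between the $\cP_\phy$-kernel and the $\SW$-kernel yields a nonzero map out of some $\nabla_\theta$ that contradicts the injectivity of $A \to B$ in $\cP_\phy$.
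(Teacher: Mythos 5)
Your decomposition into (a) (exactness of the inclusion $\cP_\phy \hookrightarrow \SW\lgmod$) and (b) (the vanishing $\uHom^j_{\SW}(\nabla_\chi\la n\ra, B) = 0$ for $j>0$ and $B \in \cP_\phy$) matches the paper's own strategy closely: the paper also reduces via~\cite[Proposition~3.1.16]{bbd} to the single statement (b), phrased there as effaceability of $\Hom^i_{\Db(\SW)}({-},B)$, and Corollary~\ref{cor:nabla-ext} is the decisive input in both treatments. Where you genuinely differ is in how (b) is obtained. You propose to deduce it from (a) by a collapsing hypercohomology spectral sequence attached to a (possibly infinite) $\phy$-projective resolution of $B$. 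The paper instead exploits that a $\phy$-projective $P$ has finite projective dimension $n$ over $\SW$: it truncates the resolution $Q_\bullet$ of $B$ at degree $n$, forms the triangle $K[n] \to R \to B \to$, and sandwiches $\uHom^i(P,B)$ between $\uHom^i(P,R)$ (zero by Corollary~\ref{cor:nabla-ext}, since $R$ is a bounded complex of $\phy$-projectives in nonpositive degrees) and $\uHom^{n+1+i}(P,K)$ (zero once $n+1+i$ exceeds the projective dimension of $P$). Both routes correctly avoid invoking Proposition~\ref{prop:bnabla-res}, which is downstream of the present result, and both are valid granting (a).

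On (a) itself: you are right that it is logically required, both for $\rho$ to be well defined and (implicitly) in the paper's own proof, which uses it to identify the cone of $R \to B$ with $K[n+1]$. But your sketch does not close the gap. The surjectivity half is fine, by lifting along a $\phy$-projective cover exactly as you say. The final sentence, however, does not follow as stated: writing $K_{\SW}$ for the $\SW$-kernel and $F = \bigoplus_{\chi\in\phy}\nabla_\chi$, the assumption that $A \to B$ is monic in $\cP_\phy$ only tells you $\uHom(F,K_{\SW}) = 0$, and a nonzero $K_{\SW}$ generated entirely by copies of $L_\theta\la m\ra$ with $[\theta]\succ\phy$ would satisfy this while admitting no nonzero map from any $\nabla_\theta$ with $\theta\in\phy$. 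So no contradiction arises from the stated considerations alone; an additional input is needed (for instance, information coming from Theorem~\ref{thm:kato} about the filtration structure of $\phy$-presentable modules) to rule out such a kernel. I would encourage you to make that step explicit before treating (a) as established.
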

\begin{proof}
By arguing as in~\cite[Proposition~3.1.16]{bbd}, we can reduce this to showing that the contravariant $\delta$-functor $\Hom^i_{\Db(\SW)}({-},B)$ (for fixed $B \in \cP_{\phy}$) is effaceable.   Recall that this means that for any $A \in \cP_{\phy}$, we must show that there is a surjective map $M \to A$ such that the induced map
\[
\Hom^i(A,B) \to \Hom^i(M,B)
\]
vanishes.  Since $\cP_{\phy}$ has enough projectives, it suffices to show that
\begin{equation}\label{eqn:phy-eff}
\uHom^i_{\Db(\SW)}(P,B) = 0
\quad\text{if $P$ is $\phy$-projective, $B$ is $\phy$-presentable, and $i > 0$.}
\end{equation}
To prove this, let $n$ be the projective dimension of $P$ as an $\SW$-module.  (Here, we are using the fact that $\SW$ has finite global dimension.)  Choose an $\phy$-projective resolution $Q_\bullet$ for $B$.  Let $R$ be the complex obtained by omitting the terms $Q_i$ for $i > n$, and let $K$ be the kernel of the map $Q_n \to Q_{n-1}$.  Then there is a distinguished triangle
\[
K[n] \to R \to B \to 
\]
in $\Db(\SW)$.  By assumption, $\uHom^i(P,K[n]) = \uHom^i(P,K[n+1]) = 0$ for all $i > 0$, and Corollary~\ref{cor:nabla-ext} implies that $\uHom^i(P,R) = 0$.  Thus,~\eqref{eqn:phy-eff} holds.
\end{proof}

\begin{cor}\label{cor:phyqco-ff}
The functor $\rho: \Db\cQ_{\phy} \to \Db(\SW)$ is fully faithful.
\end{cor}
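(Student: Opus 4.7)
The plan is to factor $\rho: \Db\cQ_{\phy} \to \Db(\SW)$ as the composition of the natural functor $\Db\cQ_\phy \to \Db\cP_\phy$ (induced by the inclusion $\cQ_\phy \hookrightarrow \cP_\phy$ from Lemma~\ref{lem:phy-ab}) with $\rho: \Db\cP_\phy \to \Db(\SW)$.  The latter is fully faithful by Proposition~\ref{prop:phy-ff}, so it suffices to show that the former is fully faithful.

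I would transport the problem to the graded ring $\Gamma = \uHom(F,F)^{\op}$ with $F = \bigoplus_{\chi \in \phy} \nabla_\chi$, as constructed in Step~3 of the proof of Lemma~\ref{lem:phy-ab}.  That step yields an equivalence $e: \cP_\phy \simto \Gamma\lgmod$, and Remark~\ref{rmk:phy-fd} tells us $e$ restricts to an equivalence $\cQ_\phy \simto \Gamma\lgmodfd$.  Full-faithfulness of $\Db\cQ_\phy \to \Db\cP_\phy$ is then exactly Lemma~\ref{lem:fd} applied to $\Gamma$, provided the hypotheses of that lemma can be verified for $\Gamma$.

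Nonnegativity of the grading on $\Gamma$ and the description $\Gamma_0 \cong \prod_{\chi \in \phy} \C$ both follow from~\eqref{eqn:tk-formula}: each $\nabla_\chi$ is generated by $L_\chi$ sitting in grade $0$, so a nonzero grade-$d$ morphism $\nabla_\chi \to \nabla_\psi$ is determined by an element of $\Hom_W(L_\chi, \gr_d \nabla_\psi)$ that is forced to be nonzero, which requires $d \geq 0$ and, when $d = 0$, $\chi = \psi$.  Since $\phy$ is finite (as $W$ is), $\Gamma_0$ is finite-dimensional.

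The step I expect to be the main (if still minor) obstacle is noetherianity of $\Gamma$.  I would argue this using the central map $\SfhcW \to Z(\Gamma)$ coming from multiplication by invariants on $F$.  By Chevalley, $\Sfh$ is finite over the noetherian ring $\SfhcW$, hence so is $\SW$; therefore each $\nabla_\chi \in \SW\lgmod$ (Theorem~\ref{thm:kato}\eqref{it:tk-nabla}) is finitely generated over $\SfhcW$, as is $F$.  Choosing $\SfhcW$-generators $f_1, \dots, f_m$ of $F$, the evaluation $\phi \mapsto (\phi(f_1), \dots, \phi(f_m))$ embeds $\Gamma$ into $F^m$ as an $\SfhcW$-submodule, showing $\Gamma$ is finitely generated over $\SfhcW$.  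Since $\SfhcW$ lies in the center of $\Gamma$, any one-sided ideal of $\Gamma$ is an $\SfhcW$-submodule and hence finitely generated over $\SfhcW$ (and so over $\Gamma$), which proves that $\Gamma$ is noetherian.  With the hypotheses of Lemma~\ref{lem:fd} now in place, the corollary follows.
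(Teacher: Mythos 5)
Your proposal is correct and takes essentially the same route as the paper: factor $\rho$ through $\Db\cP_\phy$, invoke Proposition~\ref{prop:phy-ff} for the second leg, and reduce the first leg to Lemma~\ref{lem:fd} via the equivalence of Remark~\ref{rmk:phy-fd}. The paper leaves the hypotheses of Lemma~\ref{lem:fd} for $\Gamma$ implicit; your explicit check that $\Gamma$ is nonnegatively graded with $\Gamma_0 \cong \bigoplus_{\chi \in \phy}\C$ and is noetherian (module-finite over the central copy of $\SfhcW$) is a sound and worthwhile addition.
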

\begin{proof}
It clearly suffices to show that $\Db\cQ_{\phy} \to \Db\cP_{\phy}$ is fully faithful.  In view of Remark~\ref{rmk:phy-fd}, this follows from Lemma~\ref{lem:fd}.
\end{proof} 

\begin{prop}\label{prop:bnabla-res}
The category $\cP_{\phy}$ has finite global dimension.  In particular, each $\bnabla_\chi$ admits a finite resolution of the form
\[
0 \to Q_n \to \cdots \to Q_2 \to Q_1 \to Q_0 \to \bnabla_\chi \to 0
\]
where $Q_0 = \nabla_\chi$ and each $Q_i$ for $i > 0$ is $[\chi]$-projective with grades${}>0$.
\end{prop}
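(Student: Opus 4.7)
The plan splits into two parts: first, deducing finite global dimension for $\cP_{\phy}$ from the corresponding property of $\SW$, and second, constructing the resolution by an induction that tracks grades.

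For finite global dimension, I would invoke Proposition~\ref{prop:phy-ff}. For $A, B \in \cP_{\phy}$ regarded as complexes concentrated in degree~$0$, full faithfulness of $\rho$ yields $\Ext^i_{\cP_{\phy}}(A,B) \cong \Hom^i_{\Db(\SW)}(A,B) = \Ext^i_{\SW\lgmod}(A,B)$. Since $\Sfh$ has global dimension $\dim\fh$ and $|W|$ is invertible in $\C$, the skew group algebra $\SW$ inherits finite global dimension, so the right-hand side vanishes for $i$ sufficiently large. This uniformly bounds $\Ext^{>n}_{\cP_{\phy}}$.

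For the resolution, set $Q_0 = \nabla_\chi$ together with the canonical surjection $\nabla_\chi \twoheadrightarrow \bnabla_\chi$. By the explicit formula in~\eqref{eqn:bnabla-defn}, the kernel $K_0$ is the sum of images of maps $\nabla_\psi\la n\ra \to \nabla_\chi$ with $\psi \sim \chi$ and $n > 0$. Since $\nabla_\chi$ is finitely generated over the noetherian ring $\SW$, $K_0$ is finitely generated, so finitely many such maps suffice, giving a surjection $Q_1 \twoheadrightarrow K_0$ from a $[\chi]$-projective $Q_1 = \bigoplus_j \nabla_{\psi_j}\la n_j\ra$ with every $n_j > 0$. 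I would then iterate: given a partial resolution with maps $d_j: Q_j \to Q_{j-1}$ such that $Q_j$ is $[\chi]$-projective with grades${}>0$, let $K_j = \ker d_j$. Then $K_j \subset Q_j$ automatically has grades${}>0$, and since $\cP_{\phy}$ has enough projectives (Lemma~\ref{lem:phy-ab}) whose indecomposable summands are $\nabla_\psi\la n\ra$, one can cover $K_j$ by a $[\chi]$-projective $Q_{j+1}$ whose summands are all of the form $\nabla_\psi\la n\ra$ with $n > 0$, simply by matching a generating set of homogeneous elements of $K_j$ sitting in positive grades.

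Termination follows from the first step: once $j$ exceeds the projective dimension of $\bnabla_\chi$ in $\cP_{\phy}$, the kernel $K_j$ is itself projective in $\cP_{\phy}$, hence $[\chi]$-projective with grades${}>0$, and the sequence can be closed off by taking $Q_{j+1} = K_j$. The main step is the first, namely the transfer of finite global dimension from $\SW\lgmod$ to $\cP_{\phy}$ through Proposition~\ref{prop:phy-ff}; the grade bookkeeping in the iterative construction is then automatic, because every submodule of a module with grades${}>0$ again has grades${}>0$.
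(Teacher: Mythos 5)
Your argument is correct and follows the same route as the paper, whose proof simply declares the proposition an immediate consequence of Proposition~\ref{prop:phy-ff} together with the finiteness of the global dimension of $\SW$; you have supplied exactly the expected unpacking, using full faithfulness of $\rho$ to transfer $\Ext$-vanishing from $\SW\lgmod$ to $\cP_\phy$ and then tracking grades through the projective resolution in $\cP_\phy$. The one place you gloss slightly is why each kernel $K_j$ admits a $[\chi]$-projective cover at all---this is because $K_j$ is again $\phy$-presentable, which follows from the abelianness of $\cP_\phy$ established in Lemma~\ref{lem:phy-ab}---but this is a minor omission and the grade bookkeeping itself is sound.
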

\begin{proof}
This is an immediate consequence of Proposition~\ref{prop:phy-ff} and the fact that $\SW$ has finite global dimension.
\end{proof}

\begin{lem}\label{lem:bnabla-admis}
Let $\phy$ be a phylum.  For $M \in \Db(\SW)$, we have $M \in \Db(\SW)_{\prec \phy}$ if and only if $\uRHom(M, \bnabla_\chi) = 0$ for all $\chi$ with $[\chi] \succeq \phy$.
\end{lem}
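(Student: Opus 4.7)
The plan is to compare the given condition to the analogous statement with $\nabla_\chi$ in place of $\bnabla_\chi$, and then invoke Proposition~\ref{prop:nabla-admis} to conclude.

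For the forward direction, suppose $M \in \Db(\SW)_{\prec\phy}$, and fix some $\chi$ with $[\chi] \succeq \phy$. Proposition~\ref{prop:bnabla-res} furnishes a finite resolution of $\bnabla_\chi$ by $[\chi]$-projective modules, each of which is a direct sum of various $\nabla_\psi\la n\ra$ with $\psi \sim \chi$, hence with $[\psi] \succeq \phy$. Since $\uRHom(M,\nabla_\psi) = 0$ for all such $\psi$ by Proposition~\ref{prop:nabla-admis}, a standard dévissage along this resolution yields $\uRHom(M, \bnabla_\chi) = 0$.

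For the reverse direction, assume $\uRHom(M,\bnabla_\chi) = 0$ for all $\chi$ with $[\chi] \succeq \phy$. By Proposition~\ref{prop:nabla-admis}, it suffices to establish $\uRHom(M,\nabla_\chi) = 0$ for such $\chi$; unwinding the grading, it suffices to prove $\Hom^i(M\la k\ra, \nabla_\chi) = 0$ for every $i, k \in \Z$. Here the main obstacle is the potentially infinite filtration of $\nabla_\chi$ provided by Theorem~\ref{thm:kato}\eqref{it:nabla-filt}, but it is circumvented by Lemma~\ref{lem:nabla-filt-ses}: for $n$ large enough (depending on $M$, $i$, and $k$), there is a $[\chi]$-quasicostandard quotient $Y_n$ of $\nabla_\chi$ such that $\Hom^i(M\la k\ra, \nabla_\chi) \cong \Hom^i(M\la k\ra, Y_n)$.

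Since $Y_n$ is finite-dimensional and $[\chi]$-good, it admits a \emph{finite} filtration whose subquotients have the form $\bnabla_\psi\la m\ra$ for various $\psi \sim \chi$ and $m \in \Z$. The standing hypothesis ensures that $\Hom^i(M\la k\ra, \bnabla_\psi\la m\ra) = 0$ for each such subquotient, so a straightforward induction on the filtration length, using the long exact $\Hom$-sequence, yields $\Hom^i(M\la k\ra, Y_n) = 0$, completing the proof.
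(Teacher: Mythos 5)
Your proof is correct and follows essentially the same route as the paper: the forward direction uses Proposition~\ref{prop:nabla-admis} together with the finite $[\chi]$-projective resolution of $\bnabla_\chi$ from Proposition~\ref{prop:bnabla-res}, and the reverse direction reduces to the $\nabla_\chi$ version via Proposition~\ref{prop:nabla-admis} and Lemma~\ref{lem:nabla-filt-ses}, then does a d\'evissage along the finite $\bnabla$-filtration of $Y_n$. The only cosmetic difference is that the paper phrases the reverse direction as a proof by contradiction (assuming $M \notin \Db(\SW)_{\prec\phy}$ and producing a nonvanishing $\uRHom$) while you argue directly; the underlying content is identical.
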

\begin{proof}
If $M \in \Db(\SW)_{\prec \phy}$, then by Proposition~\ref{prop:nabla-admis}, we have $\uRHom(M,P) = 0$ for any $\phy$-projective $P$.  It follows from Proposition~\ref{prop:bnabla-res} that $\uRHom(M, \bnabla_\chi) = 0$.

Now assume that $M \notin \Db(\SW)_{\prec \phy}$.  By Proposition~\ref{prop:nabla-admis}, there is some $k \in \Z$ and some $\chi$ with $\chi \succeq \phy$ such that $\RHom(M\la k\ra, \nabla_\chi) \ne 0$.  By Lemma~\ref{lem:nabla-filt-ses}, there is a $[\chi]$-quasicostandard object $Y_n$ such that $\RHom(M\la k\ra, Y_n) \ne 0$.  But if $\uRHom(M, \bnabla_\psi) = 0$ for all $\psi \sim \chi$, it would follow that $\uRHom(M, Y_n) = 0$, a contradiction.
\end{proof}

\begin{lem}\label{lem:del-bnab}
If $\chi \not\sim \psi$, then $\uRHom(\Delta_\chi, \bnabla_\psi) = 0$.  In particular, we have $\bnabla_\psi \in \Dfd(\SW)_{\preceq [\psi]}$.
\end{lem}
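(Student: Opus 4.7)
The plan is to exploit the finite resolution of $\bnabla_\psi$ provided by Proposition \ref{prop:bnabla-res}, which expresses $\bnabla_\psi$ as the cokernel at the end of a finite complex of $[\psi]$-projective modules, i.e.\ direct sums of $\nabla_\theta\la m\ra$ with $\theta \sim \psi$. For the first assertion, I note that if $\chi \not\sim \psi$, then $\chi \not\sim \theta$ for every $\theta \sim \psi$, so Corollary \ref{cor:delta-nabla-orth}\eqref{it:orth-diff} gives $\uRHom(\Delta_\chi, \nabla_\theta\la m\ra) = 0$ for every term that appears in the resolution. The vanishing then propagates to $\bnabla_\psi$ by assembling the resolution via iterated distinguished triangles (equivalently, by running the standard hypercohomology spectral sequence).

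For the ``In particular'' clause, finite-dimensionality of $\bnabla_\psi$ is already recorded in Lemma \ref{lem:bnabla-findim}, so the remaining task is to place $\bnabla_\psi$ in $\Db(\SW)_{\preceq [\psi]}$. The cleanest route is to reuse the same resolution: each $\nabla_\theta\la m\ra$ with $\theta \sim \psi$ lies in $\Db(\SW)_{\preceq [\psi]}$ by construction, so the triangulated closure of such objects contains $\bnabla_\psi$. The alternative route, more in line with the phrasing ``in particular,'' is to invoke Proposition \ref{prop:nabla-admis}: the vanishing just proved gives $\uRHom(\Delta_\chi, \bnabla_\psi) = 0$ whenever $[\chi] \succ [\psi]$, so for any phylum $\phy \succ [\psi]$ we conclude $\bnabla_\psi \in \Db(\SW)_{\prec \phy}$; since $\Irr(W)$ is finite one may take $\phy$ to be the immediate successor of $[\psi]$, identifying this with $\Db(\SW)_{\preceq [\psi]}$ (with the case of maximal $[\psi]$ being automatic).

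I do not anticipate a serious obstacle: both parts collapse once Proposition \ref{prop:bnabla-res} is on the table, and the only mild design choice is whether to deduce the second assertion directly from the resolution or to route it through the orthogonality characterization of Proposition \ref{prop:nabla-admis}. The former avoids any case split around maximal phyla; the latter better justifies the ``in particular.'' Either way, the lemma inherits the standing ``black-box'' dependence on Theorem \ref{thm:kato} via Proposition \ref{prop:bnabla-res}.
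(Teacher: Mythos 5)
Your proof is correct and matches the paper's argument essentially verbatim: the first assertion via Corollary~\ref{cor:delta-nabla-orth}\eqref{it:orth-diff} applied term-by-term to the resolution of Proposition~\ref{prop:bnabla-res}, and the paper itself offers both of the routes you identify for the ``in particular'' clause (Proposition~\ref{prop:nabla-admis} on one hand, or Propositions~\ref{prop:nabla-gen} and~\ref{prop:bnabla-res} on the other). Your elaboration of the immediate-successor bookkeeping for the Proposition~\ref{prop:nabla-admis} route is fine and is something the paper leaves implicit.
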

\begin{proof}
The first assertion comes from Corollary~\ref{cor:delta-nabla-orth} and Proposition~\ref{prop:bnabla-res}.  The second follows either from Proposition~\ref{prop:nabla-admis}, or from Propositions~\ref{prop:nabla-gen} and~\ref{prop:bnabla-res}.
\end{proof}

\begin{lem}\label{lem:nab-bnab-eq}
If $\chi \sim \psi$, then we have
\[
\Hom^i(\Delta_\chi, \bnabla_\psi\la n\ra) \cong \uHom^i(\nabla_\chi, \bnabla_\psi\la n\ra) \cong
\begin{cases}
\C & \text{if $i = 0$, $n = 0$, and $\chi = \psi$,} \\
0 & \text{otherwise.}
\end{cases}
\]
\end{lem}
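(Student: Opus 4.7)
The plan is to reduce both computations to morphism calculations inside the abelian category $\cP_\phy$, where $\phy = [\chi] = [\psi]$.  Inside $\cP_\phy$ the object $\nabla_\chi$ is projective (with unique simple quotient $\bnabla_\chi$), and $\bnabla_\psi\la n\ra$ is simple; the answer will then follow from a direct Hom-computation between a projective and a simple object, provided one can also transfer the conclusion from $\nabla_\chi$ to $\Delta_\chi$.

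For the middle term, I would invoke the full faithfulness of $\rho\colon \Db\cP_\phy \to \Db(\SW)$ from Proposition~\ref{prop:phy-ff} to rewrite $\uHom^i_{\Db(\SW)}(\nabla_\chi,\bnabla_\psi\la n\ra)$ as $\uHom^i_{\cP_\phy}(\nabla_\chi,\bnabla_\psi\la n\ra)$.  Since $\nabla_\chi$ is projective in $\cP_\phy$ by Lemma~\ref{lem:phy-ab}, the groups with $i>0$ vanish, and the $i=0$ piece reduces to ordinary Hom in $\cP_\phy$.  The characterization of $\bnabla_\psi\la n\ra$ as the simple object $\Sigma_{\psi,n}$ from Step~4 of the proof of Lemma~\ref{lem:phy-ab} then picks out the single copy of $\C$ when $\chi=\psi$ and $n=0$, and gives $0$ in the remaining cases.

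For the $\Delta_\chi$ variant, I would deduce the result from the $\nabla_\chi$ case by establishing an isomorphism $\uRHom(\Delta_\chi,\bnabla_\psi\la n\ra)\cong \uRHom(\nabla_\chi,\bnabla_\psi\la n\ra)$.  To do this, apply both $\uRHom(\Delta_\chi,-)$ and $\uRHom(\nabla_\chi,-)$ termwise to the finite $\phy$-projective resolution $Q_\bullet\to \bnabla_\psi\la n\ra$ supplied by Proposition~\ref{prop:bnabla-res} (with $\phy=[\psi]=[\chi]$).  By Corollaries~\ref{cor:delta-nabla-orth}\eqref{it:orth-ext} and~\ref{cor:nabla-ext}, both functors annihilate positive-degree $\Ext$-groups into any single summand $\nabla_\theta\la m\ra$ appearing in $Q_\bullet$ (here $\theta\sim\chi$), so each $\uRHom$ can be computed as the cohomology of the chain complex obtained by applying plain graded Hom termwise.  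Corollary~\ref{cor:delta-nabla-same} identifies these two Hom-complexes term by term, forcing their cohomologies to agree.

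The delicate point is this last propagation step: Corollary~\ref{cor:delta-nabla-same} only directly compares $\uRHom$ applied to a single $\nabla_\theta$ with $\theta\sim\chi$, and one must be sure this comparison survives the passage through a resolution.  The higher-$\Ext$-vanishing statements recalled above are precisely what makes the term-by-term identification meaningful, so no spectral-sequence bookkeeping is required; this is the main obstacle, but it is handled cleanly by the cited corollaries.
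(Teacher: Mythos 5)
Your proposal is correct in substance, but it diverges from the paper's argument in both halves, and the route you take for the transfer from $\nabla_\chi$ to $\Delta_\chi$ is noticeably heavier than necessary.

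For the transfer step, the paper proceeds directly from Proposition~\ref{prop:delta-nabla-cone}: the morphism $i\colon \Delta_\chi \to \nabla_\chi$ has cone $K \in \Db(\SW)_{\prec [\chi]}$, and since $[\chi]=[\psi]$, Lemma~\ref{lem:bnabla-admis} gives $\uRHom(K,\bnabla_\psi\langle n\rangle)=0$ outright, so the triangle yields $\uRHom(\nabla_\chi,\bnabla_\psi\langle n\rangle)\simto\uRHom(\Delta_\chi,\bnabla_\psi\langle n\rangle)$ in one stroke. You instead resolve $\bnabla_\psi\langle n\rangle$ by $\phy$-projectives, check term-by-term vanishing of higher $\Ext$'s (Corollaries~\ref{cor:delta-nabla-orth} and~\ref{cor:nabla-ext}), and match the two Hom-complexes via Corollary~\ref{cor:delta-nabla-same}. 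This works: the isomorphisms of Corollary~\ref{cor:delta-nabla-same} are induced by precomposition with $i$, hence natural in the second variable, so they assemble into an isomorphism of complexes; and you do correctly point out that the higher-$\Ext$ vanishing is what allows both hyper-$\RHom$'s to be computed by the naive Hom-complex. But note that your key ingredient (Corollary~\ref{cor:delta-nabla-same}) itself rests on exactly the same Proposition~\ref{prop:delta-nabla-cone} that the paper uses directly, so you are re-deriving the same comparison through an extra layer of resolution machinery. The triangle argument is shorter and avoids any discussion of resolutions.

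On the first half, one caution: you justify the $i=0$ answer by appealing to Step~4 of the proof of Lemma~\ref{lem:phy-ab}, which identifies $\bnabla_\psi\langle n\rangle$ with the simple object $\Sigma_{\psi,n}$ of $\cP_\phy$. But the paper's own Step~4 cites Lemma~\ref{lem:nab-bnab-eq} (the statement you are proving) in a parenthetical to justify that $\bnabla_\psi\langle n\rangle$ has the characterizing Hom-property. Quoting Step~4 here therefore creates a formal circularity. The circularity is easily broken---the $i=0$ computation follows directly from the presentation~\eqref{eqn:bnabla-defn} together with Lemma~\ref{lem:bnabla-findim}, without invoking the simplicity statement of Step~4---and the paper's phrase ``the result follows from the definition of $\bnabla_\psi$'' is referring to exactly that direct argument. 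You should replace the appeal to Step~4 with this direct computation to keep the logic acyclic.
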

\begin{proof}
Consider the morphism $i: \Delta_\chi \to \nabla_\chi$ of Proposition~\ref{prop:delta-nabla-cone}.  It follows from Lemma~\ref{lem:bnabla-admis} that $i$ induces an isomorphism $\uRHom(\nabla_\chi, \bnabla_\psi) \simto \uRHom(\Delta_\chi, \bnabla_\psi)$.  We now focus on the former.

It is trivial that $\uHom^i(\nabla_\chi,\bnabla_\psi)$ vanishes for $i < 0$.  When $i > 0$, Corollary~\ref{cor:nabla-ext} and Proposition~\ref{prop:bnabla-res} together imply that $\uHom^i(\nabla_\chi, \bnabla_\psi) = 0$.  Finally, when $i = 0$, the result follows from the definition of $\bnabla_\psi$.
\end{proof}

\begin{prop}\label{prop:bnabla-gen}
Let $\phy$ be a phylum.
\begin{enumerate}
\item $\Dfd(\SW)_{\preceq \phy}$ is generated as a triangulated category by the $\bnabla_\chi\la n\ra$ (resp.~the objects $\D(\bnabla_\chi)\la n\ra$) with $[\chi] \preceq \phy$.\label{it:gen-le}
\item $\Dfd(\SW)_{\phy}$ is generated as a triangulated category by the $\bnabla_\chi\la n\ra$ with $\chi \in \phy$.\label{it:gen-eq}
\item $\Dfd(\SW)^{\phy}$ is generated as a triangulated category by the objects $\D(\bnabla_\chi)\la n\ra$ with $\chi \in \phy$.\label{it:gen-dual}
\end{enumerate}
\end{prop}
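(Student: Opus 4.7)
The plan is to prove~(2) first, then derive~(1) by induction on the preorder, and finally obtain~(3) from~(2) via duality. Before starting, note that each $\bnabla_\chi$ lies in $\Dfd(\SW)_{[\chi]}$: the finite resolution of Proposition~\ref{prop:bnabla-res} by $[\chi]$-projective modules places $\bnabla_\chi$ in $\Db(\SW)_{[\chi]}$, and Lemma~\ref{lem:bnabla-findim} gives finite-dimensionality. Writing $\cC_\phy$ (resp.\ $\cC_{\preceq\phy}$) for the triangulated subcategory of $\Dfd(\SW)$ generated by the $\bnabla_\chi\la n\ra$ with $\chi \in \phy$ (resp.\ $[\chi] \preceq \phy$), this yields $\cC_\phy \subseteq \Dfd(\SW)_\phy$ and $\cC_{\preceq\phy} \subseteq \Dfd(\SW)_{\preceq\phy}$; the task is to establish the reverse inclusions.

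For~(2), the essential image of $\rho\colon \Db\cP_\phy \to \Db(\SW)$ (Proposition~\ref{prop:phy-ff}) contains $\Db(\SW)_\phy$, since it contains each $\nabla_\chi\la n\ra$ with $\chi \in \phy$ and the latter generate $\Db(\SW)_\phy$ by Proposition~\ref{prop:nabla-gen}. The essential image of $\rho\colon \Db\cQ_\phy \to \Db(\SW)$ (Corollary~\ref{cor:phyqco-ff}) is exactly $\cC_\phy$, because the simples of $\cQ_\phy$ are the $\bnabla_\chi\la n\ra$ with $\chi \in \phy$ by Lemma~\ref{lem:phy-ab}. So, given $M \in \Dfd(\SW)_\phy$, write $M \cong \rho(M')$ with $M' \in \Db\cP_\phy$; it suffices to show $M' \in \Db\cQ_\phy$, equivalently (by Remark~\ref{rmk:phy-fd}) that each $\cP_\phy$-cohomology of $M'$ is finite-dimensional. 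Under the equivalence $e\colon \cP_\phy \simto \Gamma\lgmod$, these cohomologies are the $\uExt^i_\SW(F, M)$ for $F = \bigoplus_{\psi \in \phy}\nabla_\psi$, and each is finite-dimensional because $\nabla_\psi$ is finitely generated with generators in grade~$0$ and admits a finite resolution by finitely-generated projective modules (since $\SW$ has finite global dimension), while $M$ is a bounded complex of finite-dimensional modules.

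For~(1), induct on $\phy$; the base case (a minimal phylum) has $\Dfd(\SW)_{\preceq\phy} = \Dfd(\SW)_\phy$ and so reduces to~(2). For the inductive step with $M \in \Dfd(\SW)_{\preceq\phy}$, use the admissibility of $(\Db(\SW)_\phy, \Db(\SW)_{\prec\phy})$ in $\Db(\SW)_{\preceq\phy}$ (Corollary~\ref{cor:phy-admis}) to produce a distinguished triangle $M_\phy \to M \to M_{\prec\phy} \to$. Once both components are shown to lie in $\Dfd$, Part~(2) gives $M_\phy \in \cC_\phy$, the induction hypothesis gives $M_{\prec\phy} \in \cC_{\prec\phy}$, and hence $M \in \cC_{\preceq\phy}$. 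The $\D(\bnabla_\chi)\la n\ra$ version of~(1) and Part~(3) then follow by applying the duality $\D$: it preserves $\Dfd(\SW)$ (since $\Sfh$ has finite global dimension and the $\D$-dual of a finite-dimensional module, computed via a Koszul-type resolution, is again finite-dimensional), preserves $\Db(\SW)_{\preceq\phy}$ by~\eqref{eqn:dual-strat}, and sends $\Db(\SW)_\phy$ to $\Db(\SW)^\phy$ because $\D(\nabla_\chi) = \Delta_{\bar\chi}$ with $\bar\chi \sim \chi$. The main obstacle is verifying that $M_\phy$ and $M_{\prec\phy}$ belong to $\Dfd$: the cleanest route is to introduce larger abelian analogues $\cP_{\preceq\phy}$ and $\cQ_{\preceq\phy}$ of $\cP_\phy$ and $\cQ_\phy$ and rerun the Part~(2) argument at the $\preceq\phy$ level, so that the admissibility decomposition is realized inside $\Db\cQ_{\preceq\phy}$ and each component automatically lies in $\Dfd$.
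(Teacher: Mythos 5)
Your argument for part~(2) via the full embedding $\Db\cQ_\phy \hookrightarrow \Db\cP_\phy \hookrightarrow \Db(\SW)$ is genuinely different from the paper's and, modulo careful bookkeeping, it can be made to work: given $M \in \Dfd(\SW)_\phy$, the essential image of $\rho\colon \Db\cP_\phy \to \Db(\SW)$ contains $M$, and the $\cP_\phy$-cohomologies of the preimage $M'$ are identified (using projectivity of $F$ in $\cP_\phy$ and full faithfulness of $\rho$) with $\uExt^i_\SW(F, M)$, which are finite-dimensional. This is more computational than what the paper does, but it is a legitimate route.

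The real problem is part~(1). You correctly flag that after applying the admissibility decomposition of Corollary~\ref{cor:phy-admis} to $M \in \Dfd(\SW)_{\preceq\phy}$, you must show $M_\phy, M_{\prec\phy} \in \Dfd(\SW)$, and you propose to handle this by introducing $\cP_{\preceq\phy}$ and $\cQ_{\preceq\phy}$. But this is not merely a matter of ``rerunning the argument at the $\preceq\phy$ level.'' The whole point of Lemma~\ref{lem:phy-ab} is that the $\nabla_\chi$ with $\chi \in \phy$ are simultaneously projective objects in a single Serre subcategory $\SW\lgmod_{\not\prec\phy}$ and satisfy $\uExt^i(\nabla_\chi, \nabla_\psi)=0$ for $i>0$ and $\chi\sim\psi$ (Corollary~\ref{cor:nabla-ext}), and both properties fail across phyla: for $[\chi] \prec [\psi] \preceq \phy$, the module $\nabla_\chi$ is not projective in $\SW\lgmod_{\not\prec [\psi]}$, and $\uExt^{>0}(\nabla_\chi, \nabla_\psi)$ can be nonzero. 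So there is no obvious abelian category $\cP_{\preceq\phy}$ with the $\nabla_\chi\la n\ra$ ($[\chi] \preceq \phy$) as projectives for which an analogue of Proposition~\ref{prop:phy-ff} holds. The fix as stated does not go through.

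The paper avoids this entirely by never leaving $\Dfd(\SW)$. It defines $D_{\preceq\phy}$, $D_{\succ\phy}$, $D_\phy$, $D_{\prec\phy}$ directly as triangulated subcategories of $\Dfd(\SW)$ generated by the relevant $\bnabla_\chi\la n\ra$, notes that the full collection $\{\bnabla_\chi\la n\ra\}$ generates $\Dfd(\SW)$ (Lemma~\ref{lem:bnabla-findim} gives the upper-triangularity needed), and then shows $(D_{\preceq\phy}, D_{\succ\phy})$ is an admissible pair \emph{in} $\Dfd(\SW)$ using Lemmas~\ref{lem:bnabla-admis} and~\ref{lem:del-bnab}. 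The orthogonality characterization of Lemma~\ref{lem:admissible}\eqref{it:rt-orth} then forces $\Dfd(\SW)_{\preceq\phy} = D_{\preceq\phy}$; a second admissible pair $(D_{\prec\phy}, D_\phy)$ inside $D_{\preceq\phy}$ handles part~(2). Because the decompositions are constructed inside $\Dfd(\SW)$ from the start, the finite-dimensionality of the pieces is automatic, and there is no need for the auxiliary abelian categories. You should replace your inductive argument for~(1) with this admissible-pair argument; your argument for~(2) can then be retained as an alternative, or replaced by the shorter one in the paper.
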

\begin{proof}
It is clear that parts~\eqref{it:gen-eq} and~\eqref{it:gen-dual} are equivalent.  Similarly, $\Dfd(\SW)_{\preceq \phy}$ is stable under $\D$, so it is generated by the $\bnabla_\chi\la n\ra$ if and only if it is generated by the $\D(\bnabla_\chi)\la n\ra$.  It suffices, therefore, to consider only the assertions involving the $\bnabla_\chi\la n\ra$.

Let $D_{\preceq \phy}$ (resp,~$D_{\prec \phy}$, $D_{\phy}$, $D_{\succ \phy}$) be the triangulated category generated by the $\bnabla_\chi\la n\ra$ with $[\chi] \preceq \phy$ (resp.~$[\chi] \prec \phy$, $\chi \in \phy$, $[\chi] \succ \phy$).  Lemma~\ref{lem:del-bnab} tells us that $D_{\preceq \phy} \subset \Dfd(\SW)_{\preceq \phy}$.  

It is clear from Lemma~\ref{lem:bnabla-findim} that the set of all $\bnabla_\chi\la n\ra$ generates $\Dfd(\SW)$.  Thus, $D_{\preceq \phy}$ and $D_{\succ \phy}$ together generate $\Dfd(\SW)$, and then by Lemma~\ref{lem:bnabla-admis}, we see that $(D_{\preceq \phy}, D_{\succ \phy})$ is an admissible pair.  But Lemma~\ref{lem:admissible}\eqref{it:rt-orth} tells us that $\Dfd(\SW)_{\preceq \phy} \subset D_{\preceq \phy}$, so the two categories coincide.

Part~\eqref{it:gen-le}, now proved, implies that $D_{\preceq \phy}$ is generated by $D_{\prec \phy}$ and $D_{\phy}$ together.  Combining this with Lemmas~\ref{lem:bnabla-admis} and~\ref{lem:del-bnab}, we see that $(D_{\prec \phy}, D_{\phy})$ is an admissible pair in $D_{\preceq \phy}$.  Proposition~\ref{prop:bnabla-res} implies that $D_{\phy} \subset \Dfd(\SW)_{\phy}$, but Lemma~\ref{lem:admissible}\eqref{it:lt-orth} then tells us that $\Dfd(\SW)_{\phy} \subset D_{\phy}$, so these categories coincide, as desired.
\end{proof}

\section{Main results}
\label{sect:main}

\subsection{Construction of the exotic $t$-structure}
\label{subsect:tstruc}

In this subsection and the next, we rely on the general framework developed in~\cite{a:pcsnc} for constructing quasi-hereditary $t$-structures and proving derived equivalences.  The main task is to show that the collection of objects $\{ \bnabla_\chi\la n\ra \}$ satisfy the axioms in~\cite{a:pcsnc} for a ``dualizable abelianesque graded quasi-exceptional set.'' The definition of these terms is recalled in the statements of the first two propositions below.

One caveat should be kept in mind: the arguments given in~\cite{a:pcsnc} assume that the set used to label various objects is equipped with a partial order, not merely a preorder.  Below, we will give careful statements of the preorder versions of the definitions and results we need from~\cite{a:pcsnc}.  The task of rewriting the proofs from~\cite{a:pcsnc} to accommodate preorders, however, will not be done here, as it is entirely straightforward and tedious.

\begin{prop}\label{prop:qexc}
The collection of objects $\{\bnabla_\chi\}_{\chi \in \Irr(W)}$ is a graded quasi-excep\-tion\-al set in $\Dfd(\SW)$.  In other words, we have:
\begin{enumerate}
\item If $\chi \prec \psi$, then $\uRHom(\bnabla_\chi, \bnabla_\psi) = 0$.\label{it:qexc1}
\item If $\chi \sim \psi$ and $i < 0$, then $\uHom^i(\bnabla_\chi, \bnabla_\psi) = 0$.  Moreover,\label{it:qexc2}
\[
\uHom(\bnabla_\chi, \bnabla_\psi) \cong 
\begin{cases}
\C & \text{if $\chi = \psi$,} \\
0 & \text{otherwise.}
\end{cases}
\]
\item If $\chi \sim \psi$, $i > 0$, and $n\le 0$, then $\Hom^i(\bnabla_\chi, \bnabla_\psi\la n\ra) = 0$.\label{it:qexc3}
\item The objects $\{ \bnabla_\chi\la n\ra \}$ generate $\Dfd(\SW)$ as a triangulated category.\label{it:qexc4}
\end{enumerate}
In addition, this quasi-exceptional set is abelianesque, meaning that
\begin{enumerate}\setcounter{enumi}{4}
\item If $i < 0$, then $\uHom^i(\bnabla_\chi, \bnabla_\psi) = 0$ for all $\chi, \psi$.\label{it:qexc5}
\end{enumerate}
\end{prop}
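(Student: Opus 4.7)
My plan is to dispatch parts (1), (4), and (5) in one line each by invoking the already-established structural results, and then to reduce parts (2) and (3) to a single computation based on the finite resolution of $\bnabla_\chi$ from Proposition~\ref{prop:bnabla-res}.

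Part (5) will be immediate: both $\bnabla_\chi$ and $\bnabla_\psi$ lie in the abelian category $\SW\lgmod$ by Lemma~\ref{lem:bnabla-findim}, so $\uHom^i$ between them vanishes for $i<0$ for the general reason that negative Ext's between modules vanish.  For (1), $\chi \prec \psi$ puts $\bnabla_\chi$ in $\Db(\SW)_{\prec [\psi]}$ by Lemma~\ref{lem:del-bnab}, after which Lemma~\ref{lem:bnabla-admis} applied with $\phy = [\psi]$ forces $\uRHom(\bnabla_\chi, \bnabla_\psi) = 0$.  For (4), since the phyla form a finite totally ordered set, taking $\phy_{\max}$ to be the largest one gives $\Db(\SW)_{\preceq \phy_{\max}} = \Db(\SW)$, and Proposition~\ref{prop:bnabla-gen}\eqref{it:gen-le} then expresses $\Dfd(\SW)$ as being generated by the $\bnabla_\chi\la n\ra$.

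The substance is in (2) and (3).  I plan to write the resolution of Proposition~\ref{prop:bnabla-res} as $0 \to Q_k \to \cdots \to Q_0 \to \bnabla_\chi \to 0$, with $Q_0 = \nabla_\chi$ and each $Q_j$ for $j > 0$ a direct sum of $\nabla_{\theta_\alpha}\la m_\alpha\ra$ with $\theta_\alpha \sim \chi$ and $m_\alpha \ge 1$ (the strict positivity follows from the grades${}>0$ clause of Proposition~\ref{prop:bnabla-res} combined with Remark~\ref{rmk:nabla-exist}).  Proposition~\ref{prop:phy-ff} guarantees that this resolution computes $\uRHom$ in $\Db(\SW)$.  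Lemma~\ref{lem:nab-bnab-eq} then evaluates $\uRHom(\nabla_{\theta_\alpha}\la m_\alpha\ra, \bnabla_\psi\la n\ra)$ as $\C$ concentrated in cohomological degree~$0$ precisely when $\theta_\alpha = \psi$ and $m_\alpha = n$, and zero otherwise.  For $n \le 0$ the condition $m_\alpha = n$ fails for every $j > 0$ term, so only $Q_0$ contributes, and
\[
\Hom^i(\bnabla_\chi, \bnabla_\psi\la n\ra) \cong \Hom^i(\nabla_\chi, \bnabla_\psi\la n\ra),
\]
which by Lemma~\ref{lem:nab-bnab-eq} is $\C$ if $(i,n,\chi) = (0,0,\psi)$ and $0$ otherwise.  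This simultaneously yields the $i > 0$ vanishing asserted in (3) and, for $n \le 0$, the $\uHom^0$ formula in (2).  For (2) in the remaining range $n \ge 1$, the $Q_0$-term vanishes (since $n \ne 0$) and $H^0$ of the resulting complex is a kernel of a map out of~$0$, hence is $0$.

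I do not anticipate a serious obstacle.  The only point of care is that the $Q_j$ are not projective in $\SW\lgmod$, so it is essential to invoke Proposition~\ref{prop:phy-ff} rather than manipulate $\Hom$ of complexes naively; once that is in place, the entire argument reduces to grade bookkeeping, and the decisive input throughout is the strict positivity $m_\alpha \ge 1$ in the higher terms of the resolution.
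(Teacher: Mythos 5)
Your proof is correct, and for parts (1), (4), (5) it matches the paper's argument: (1) from Lemmas~\ref{lem:del-bnab} and~\ref{lem:bnabla-admis}, (4) from Proposition~\ref{prop:bnabla-gen}, (5) from both objects lying in $\SW\lgmod$.  Part (3) is also essentially the paper's argument: both rely on the finite $[\chi]$-projective resolution of Proposition~\ref{prop:bnabla-res}, the full-faithfulness of Proposition~\ref{prop:phy-ff} to compute $\Ext$ via that resolution, and the vanishing formula of Lemma~\ref{lem:nab-bnab-eq}; the paper simply peels off the kernel $M$ of $\nabla_\chi \to \bnabla_\chi$ and runs the long exact sequence, where you work directly with the whole resolution.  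The genuine divergence is in part (2).  The paper instead observes that $L_\chi$ is the unique simple quotient of $\bnabla_\chi$ and appeals to Lemma~\ref{lem:bnabla-findim} to count the multiplicity of $L_\chi$ as a composition factor of $\bnabla_\psi\la n\ra$; this is a purely representation-theoretic computation needing none of the resolution machinery.  Your route computes $\uHom^0$ via the same resolution used for (3), which gives a clean unification of (2) and (3); what the paper's version buys is that part (2) becomes independent of Propositions~\ref{prop:bnabla-res} and~\ref{prop:phy-ff}, resting only on the $W$-module structure of the $\bnabla_\chi$.  Both arguments are valid.
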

\begin{proof}
\eqref{it:qexc1}~This follows from Lemmas~\ref{lem:bnabla-admis} and~\ref{lem:del-bnab}.

\eqref{it:qexc2}~The first assertion is obvious from the fact that $\bnabla_\chi \in \SW\lgmod$.  For the second, note that $L_\chi$ is the unique simple quotient of $\bnabla_\chi$ as an $\SW$-module, and recall from Lemma~\ref{lem:bnabla-findim} that the multiplicity of $L_\chi$ as a composition factor of $\bnabla_\psi\la n\ra$ is $1$ if $\psi = \chi$ and $n = 0$, and $0$ otherwise.

\eqref{it:qexc3}~Let $M$ be the kernel of the map $\nabla_\chi \to \bnabla_\chi$.  By Lemma~\ref{lem:nab-bnab-eq}, we have $\Hom^i(\nabla_\chi, \bnabla_\psi\la n\ra) = 0$ for $i > 0$, so there is a surjective map
\[
\Hom^{i-1}(M, \bnabla_\psi\la n\ra) \to \Hom^i(\bnabla_\chi, \bnabla_\psi\la n\ra)
\]
for all $i > 0$.  (It is an isomorphism for $i > 1$.)  Now, examining Proposition~\ref{prop:bnabla-res}, we see that $M$ has a finite $\phy$-projective resolution $Q_\bullet$ where each term has grades${}>0$, so for $n \le 0$, the module $Q_j\la -n\ra$ has strictly positive grades as well.  Using Lemma~\ref{lem:nab-bnab-eq} once again, we have $\Hom(Q_j,\bnabla_\psi\la n\ra) = 0$ for all $j$, and hence $\Hom^{i-1}(M,\bnabla_\psi \la n\ra) =0$ for all $i > 0$.

\eqref{it:qexc4}~This is contained in Proposition~\ref{prop:bnabla-gen}.

\eqref{it:qexc5}~This is obvious, since the $\bnabla_\chi$ lie in $\SW\lgmod$.
\end{proof}

\begin{prop}\label{prop:dual}
The quasi-exceptional set $\{ \bnabla_\chi \}$ is dualizable.  That is, for each $\chi$, there is an object $\bDelta_\chi$ and a morphism $i: \bDelta_\chi \to \bnabla_\chi$ such that:
\begin{enumerate}
\item The cone of $i$ lies in $\Dfd(\SW)_{\prec [\chi]}$.\label{it:dual-cone}
\item If $\chi \succ \psi$, the $\uRHom(\bDelta_\chi, \bnabla_\psi) = 0$.\label{it:dual-ext}
\end{enumerate}
\end{prop}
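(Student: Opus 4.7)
The plan is to construct $\bDelta_\chi$ by restricting the admissibility setup of Corollary~\ref{cor:phy-admis} to the finite-dimensional setting and applying it to $\bnabla_\chi$.

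First, I would check that $(\Dfd(\SW)^{[\chi]}, \Dfd(\SW)_{\prec[\chi]})$ is an admissible pair in $\Dfd(\SW)_{\preceq[\chi]}$. The $\Hom$-vanishing is inherited from the corresponding assertion in $\Db(\SW)_{\preceq[\chi]}$: since $\Db(\SW)^{[\chi]}$ is generated by the $\Delta_\theta\la n\ra$ with $\theta \sim \chi$, Proposition~\ref{prop:delta-exist}\eqref{it:delta-orth} forces $\Hom(\Delta_\theta, M) = 0$ for every $M \in \Db(\SW)_{\prec[\chi]}$. For the generating condition, I would appeal to Proposition~\ref{prop:bnabla-gen}\eqref{it:gen-le} in the formulation using the objects $\D(\bnabla_\theta)\la n\ra$, $[\theta] \preceq [\chi]$, as generators of $\Dfd(\SW)_{\preceq[\chi]}$: when $\theta \in [\chi]$ such a generator lies in $\Dfd(\SW)^{[\chi]}$ by Proposition~\ref{prop:bnabla-gen}\eqref{it:gen-dual}, whereas when $[\theta] \prec [\chi]$ it lies in $\Dfd(\SW)_{\preceq[\theta]} \subset \Dfd(\SW)_{\prec[\chi]}$ because both the phylum filtration and $\Dfd(\SW)$ are stable under $\D$.

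With admissibility in hand, I would apply Lemma~\ref{lem:admissible}\eqref{it:adm-dt} to $\bnabla_\chi \in \Dfd(\SW)_{\preceq[\chi]}$ (Lemma~\ref{lem:del-bnab}), obtaining a functorial distinguished triangle $\imath(\bnabla_\chi) \to \bnabla_\chi \to \jmath(\bnabla_\chi) \to$ with $\imath(\bnabla_\chi) \in \Dfd(\SW)^{[\chi]}$ and $\jmath(\bnabla_\chi) \in \Dfd(\SW)_{\prec[\chi]}$. Setting $\bDelta_\chi := \imath(\bnabla_\chi)$ with $i$ the first arrow, condition~\eqref{it:dual-cone} is immediate. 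For condition~\eqref{it:dual-ext}, $\bDelta_\chi \in \Db(\SW)^{[\chi]}$ lies in the triangulated subcategory generated by the $\Delta_\theta\la n\ra$ with $\theta \sim \chi$, and Lemma~\ref{lem:del-bnab} gives $\uRHom(\Delta_\theta, \bnabla_\psi) = 0$ whenever $\chi \succ \psi$; the vanishing for $\bDelta_\chi$ then follows by dévissage along this generating collection.

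The main obstacle is the generating condition for the admissible pair, which hinges on the $\D$-stability of $\Dfd(\SW)_{\preceq\phy}$ invoked implicitly in the proof of Proposition~\ref{prop:bnabla-gen}. That stability itself reduces to the local-duality fact that for a finite-dimensional $\SW$-module $M$ the complex $R\uHom_{\Sfh}(M,\Sfh)$ has finite-dimensional cohomology, concentrated in a single cohomological degree.
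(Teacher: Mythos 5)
Your argument is correct and follows essentially the same route as the paper: establish that $(\Dfd(\SW)^{[\chi]}, \Dfd(\SW)_{\prec[\chi]})$ is an admissible pair in $\Dfd(\SW)_{\preceq[\chi]}$, apply Lemma~\ref{lem:admissible}\eqref{it:adm-dt} to $\bnabla_\chi$, and take $\bDelta_\chi = \imath(\bnabla_\chi)$, with condition~\eqref{it:dual-ext} then following from the semiorthogonality. The only difference is that the paper simply refers back to the proof of Proposition~\ref{prop:bnabla-gen} for the admissibility claim, whereas you verify it directly from Proposition~\ref{prop:bnabla-gen}\eqref{it:gen-le} and~\eqref{it:gen-dual} together with the $\D$-stability of the phylum filtration; this is a slightly fuller but entirely equivalent justification.
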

\begin{proof}
Let $\phy = [\chi]$.  In the proof of Proposition~\ref{prop:bnabla-gen}, we saw that the categories $(\Dfd(\SW)^{\phy}, \Dfd(\SW)_{\prec \phy})$ form an admissible pair in $\Dfd(\SW)_{\preceq \phy}$. Apply Lemma~\ref{lem:admissible}\eqref{it:adm-dt} to the object $\bnabla_\chi$, we obtain a distinguished triangle
\[
\imath(\bnabla_\chi) \overset{i}{\to} \bnabla_\chi \to \jmath(\bnabla_\chi) \to
\qquad\text{with $\imath(\bnabla_\chi) \in \Dfd(\SW)^{\phy}$ and $\jmath(\bnabla_\chi) \in \Dfd(\SW)_{\prec \phy}$.}
\]
Set $\bDelta_\chi = \imath(\bnabla_\chi)$, and let $i: \bDelta_\chi \to \bnabla_\chi$ as above.  Then part~\eqref{it:dual-cone} of the proposition is clear, and part~\eqref{it:dual-ext} holds because $\bDelta_\chi \in \Db(\SW)^{\phy}$ and $\bnabla_\psi \in \Db(\SW)_{\prec \phy}$.
\end{proof}

We are at last ready to define the exotic $t$-structure.  As with the preceding propositions, a key definition---that of a ``weakly quasi-hereditary category''---is given in the body of the following theorem.

\begin{thm}\label{thm:tstruc}
The categories
\begin{align*}
\Dfd(\SW)^{\le 0} &= \{ X \in \Dfd(\SW) \mid \text{$\uHom^i(X, \bnabla_\chi) = 0$ for all $i < 0$} \}, \\
\Dfd(\SW)^{\ge 0} &= \{ X \in \Dfd(\SW) \mid \text{$\uHom^i(\bDelta_\chi, X) = 0$ for all $i < 0$} \},
\end{align*}
constitute a bounded $t$-structure on $\Dfd(\SW)$.  Its heart, denoted by
\[
\Ex = \Dfd(\SW)^{\le 0} \cap \Dfd(\SW)^{\ge 0},
\]
is a finite-length abelian category.  All $\bDelta_\chi\la n\ra$ and $\bnabla_\chi\la n\ra$ belong to $\Ex$.  The image of the natural map $\bDelta_\chi\la n\ra \to \bnabla_\chi\la n\ra$, denoted by $\Sigma_\chi\la n\ra$, is a simple object of $\Ex$, and every simple object is of this form.

Furthermore, $\Ex$ is weakly quasi-hereditary.  This means that, letting $\Ex^{\prec \phy}$ denote the Serre subcategory of $\Ex$ generated by the $\Sigma_{\psi}\la n\ra$ with $[\psi] \prec \phy$, we have:
\begin{enumerate}
\item The kernel of $\bDelta_\chi \to \Sigma_\chi$ lies in $\Ex^{\prec [\chi]}$, and if $\psi \prec \chi$, then\label{it:qhered-std}
\[
\uHom(\bDelta_\chi, \Sigma_\psi) = \uExt^1(\bDelta_\chi, \Sigma_\psi) = 0.
\]
\item The cokernel of $\Sigma_\chi \to \bnabla_\chi$ lies in $\Ex^{\prec [\chi]}$, and if $\psi \prec \chi$, then\label{it:qhered-costd}
\[
\uHom(\Sigma_\psi, \bnabla_\chi) = \uExt^1(\Sigma_\psi, \bnabla_\chi) = 0. \qed
\]
\end{enumerate}
\end{thm}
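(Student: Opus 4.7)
The plan is to invoke the framework of \cite{a:pcsnc} for building a quasi-hereditary $t$-structure from a dualizable abelianesque graded quasi-exceptional set; Propositions~\ref{prop:qexc} and~\ref{prop:dual} verify precisely these hypotheses for $\{\bnabla_\chi\}$. As noted in the text, the adaptation from partial orders to preorders is routine and will not be written out. What follows is an outline of how each assertion is extracted from that framework.

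First I would verify the $t$-structure axioms. The shift inclusion $\Dfd(\SW)^{\le 0}[1] \subset \Dfd(\SW)^{\le 0}$ is immediate from the definition, and the orthogonality reduces, via Proposition~\ref{prop:bnabla-gen}, to showing $\uHom^i(\bDelta_\chi, \bnabla_\psi\la n\ra) = 0$ for all $i < 0$: this is Lemma~\ref{lem:del-bnab} when $\chi \not\sim \psi$, and Lemma~\ref{lem:nab-bnab-eq} when $\chi \sim \psi$. The main work is constructing truncation triangles. I would induct on the number of phyla in the (finite) support of a given $M$, peeling off one phylum at a time using the admissible pairs established in Proposition~\ref{prop:bnabla-gen} and Lemma~\ref{lem:bnabla-admis}. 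For an object supported on a single phylum $\phy$, the equivalence $\cP_\phy \simeq \Gamma\lgmod$ from the proof of Lemma~\ref{lem:phy-ab} reduces the question to the tautological $t$-structure on $\Db(\Gamma\lgmodfd)$.

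Next I would identify the heart. The containment $\bnabla_\chi \in \Dfd(\SW)^{\le 0}$ is Proposition~\ref{prop:qexc}(5); for $\bnabla_\chi \in \Dfd(\SW)^{\ge 0}$, the required vanishing $\uHom^i(\bDelta_\psi, \bnabla_\chi) = 0$ for $i < 0$ follows from Lemma~\ref{lem:nab-bnab-eq} when $\psi \sim \chi$ and from Lemma~\ref{lem:del-bnab} when $\psi \not\sim \chi$. The containments $\bDelta_\chi \in \Ex$ follow dually, using the triangle $\bDelta_\chi \to \bnabla_\chi \to C \to$ with $C \in \Dfd(\SW)_{\prec[\chi]}$ provided by Proposition~\ref{prop:dual}(1). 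The image $\Sigma_\chi$ of $\bDelta_\chi \to \bnabla_\chi$ is then well-defined in $\Ex$. To see it is simple and exhausts the simples, I would take an arbitrary simple $S \in \Ex$, locate the smallest $\Dfd(\SW)_{\preceq \phy}$ containing it, and decompose it using the admissible pair $(\Dfd(\SW)^\phy, \Dfd(\SW)_{\prec \phy})$; Proposition~\ref{prop:qexc}(2) and Lemma~\ref{lem:nab-bnab-eq} then pin $S$ down as some $\Sigma_\chi\la n\ra$. Boundedness of the $t$-structure and finite length of $\Ex$ follow from the fact that any $M \in \Dfd(\SW)$ has bounded support and finitely many nonvanishing graded $\uHom^i(\bDelta_\chi, M)$ and $\uHom^i(M, \bnabla_\chi)$.

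Finally, for the weakly quasi-hereditary property, set $K = \ker(\bDelta_\chi \to \Sigma_\chi)$ and $Q = \mathrm{coker}(\Sigma_\chi \to \bnabla_\chi)$ in $\Ex$. Both are cut out of the cone of $\bDelta_\chi \to \bnabla_\chi$, which lies in $\Dfd(\SW)_{\prec[\chi]}$ by Proposition~\ref{prop:dual}(1); together with the classification of simples, this forces $K, Q \in \Ex^{\prec[\chi]}$. The vanishing of $\uHom$ and $\uExt^1$ from $\bDelta_\chi$ to $\Sigma_\psi$ for $\psi \prec \chi$ reduces, via the long exact sequence coming from $\Sigma_\psi \hookrightarrow \bnabla_\psi$ and its cokernel in $\Ex^{\prec[\psi]}$, to the vanishing $\uRHom(\bDelta_\chi, \bnabla_\psi) = 0$ asserted in Proposition~\ref{prop:dual}(2); the dual statement for $\bnabla_\chi$ is obtained symmetrically. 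The main technical obstacle throughout is the construction and functoriality of the truncation triangles, which requires combining the phylum-by-phylum admissible pair structure with the internal $t$-structure on each $\Dfd(\SW)_\phy$ in a way that behaves well with respect to morphisms; this is exactly the point at which the adaptation of \cite{a:pcsnc} to preorders, rather than partial orders, requires care.
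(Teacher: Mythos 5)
Your proof takes the same route as the paper: the theorem is deduced from \cite[Theorem~2.10]{a:pcsnc} once Propositions~\ref{prop:qexc} and~\ref{prop:dual} have verified the dualizable abelianesque graded quasi-exceptional set hypotheses for $\{\bnabla_\chi\}$, with the preorder caveat handled as in the paper's preamble. The detailed outline you append of the internals of that citation is consistent with the standard argument, though a couple of your references conflate $\Delta_\chi$ with $\bDelta_\chi$ (e.g.\ the appeal to Lemmas~\ref{lem:nab-bnab-eq} and~\ref{lem:del-bnab} to place $\bnabla_\chi$ in $\Dfd(\SW)^{\ge 0}$ really needs $\bDelta_\psi$ there, which would instead go via Corollary~\ref{cor:std-costd-equiv}); these are minor and do not constitute a different approach.
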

\begin{proof}
According to~\cite[Theorem~2.10]{a:pcsnc}, this is a consequence of Propositions~\ref{prop:qexc} and~\ref{prop:dual}.
\end{proof}

\begin{rmk}\label{rmk:strong-qhered}
In~\cite{a:pcsnc}, categories satisfying the conditions~\eqref{it:qhered-std} and~\eqref{it:qhered-costd} were simply called ``quasi-hereditary''; the adjective ``weak'' was not used.  That terminology is compatible with~\cite{a:qhpss, bez:qes}, but not with most other sources, such as~\cite{rin:cmgf}.  In the more common usage of ``quasi-hereditary,'' one would require that
\[
\uExt^1(\bDelta_\chi, \Sigma_\psi) = \uExt^1(\Sigma_\psi, \bnabla_\chi) =0 \qquad \text{if $\psi \precsim \chi$,}
\]
not just when $\psi \prec \chi$.  This stronger condition does not hold for $\Ex$ in general.
\end{rmk}

\subsection{Derived equivalence}
\label{subsect:dereq}

We continue to rely on the machinery that was developed in~\cite{a:pcsnc}.  The $\bDelta_\chi$ are not, in general, objects of $\SW\lgmod$, but in the context of $\Ex$, they can often be treated symmetrically with the $\bnabla_\chi$.  For instance, we can now formulate a notion dual to ``$\phy$-quasicostandard.''

\begin{defn}\label{defn:phy-dual}
Let $\phy$ be a phylum.  An object of $\Db(\SW)$ is said to be \emph{$\phy$-quasistandard} if it lies in $\Ex$ and admits a filtration whose subquotients are various $\bDelta_\chi\la n\ra$.
\end{defn}

Next, we establish statements parallel to Lemma~\ref{lem:bnabla-admis} and Proposition~\ref{prop:bnabla-gen}.

\begin{prop}\label{prop:bdelta}
Let $\phy$ be a phylum.
\begin{enumerate}
\item $\Dfd(\SW)_{\preceq \phy}$ is generated as a triangulated category by the $\bDelta_\chi\la n\ra$ with $[\chi] \preceq \phy$.\label{it:dgen-le}
\item $\Dfd(\SW)^{\phy}$ is generated as a triangulated category by the $\bDelta_\chi\la n\ra$ with $\chi \in \phy$.\label{it:dgen-eq}
\item For any object $M \in \Db(\SW)$, we have $M \in \Db(\SW)_{\prec \phy}$ if and only if $\uRHom(\bDelta_\chi, M) = 0$ for all $\chi$ with $[\chi] \succeq \phy$.\label{it:bdel-admis}
\end{enumerate}
\end{prop}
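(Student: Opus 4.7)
The plan is to prove the three parts of Proposition~\ref{prop:bdelta} in sequence, following the patterns of Proposition~\ref{prop:bnabla-gen} and Lemma~\ref{lem:bnabla-admis}. Parts~\eqref{it:dgen-le} and~\eqref{it:dgen-eq} are relatively direct extensions of the arguments already in place, while part~\eqref{it:bdel-admis} requires a duality identification between $\D(\bDelta_\chi)$ and $\bnabla_{\bar\chi}$ (up to grading and cohomological shifts) that lets the converse direction be reduced to Lemma~\ref{lem:bnabla-admis}.

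For part~\eqref{it:dgen-le}, the construction in Proposition~\ref{prop:dual} places each $\bDelta_\chi$ in $\Dfd(\SW)^{[\chi]} \subset \Dfd(\SW)_{\preceq [\chi]}$, so the subcategory $D'_{\preceq \phy}$ generated by $\{\bDelta_\chi\la n\ra : [\chi] \preceq \phy\}$ lies in $\Dfd(\SW)_{\preceq \phy}$. The reverse inclusion follows by induction on phyla: the triangle $\bDelta_\chi \to \bnabla_\chi \to C \to$ of Proposition~\ref{prop:dual}\eqref{it:dual-cone} has $C \in \Dfd(\SW)_{\prec [\chi]}$, which lies in $D'_{\prec [\chi]}$ by the inductive hypothesis combined with Proposition~\ref{prop:bnabla-gen}\eqref{it:gen-le}; hence $\bnabla_\chi \in D'_{\preceq [\chi]}$, and another application of Proposition~\ref{prop:bnabla-gen}\eqref{it:gen-le} yields $\Dfd(\SW)_{\preceq \phy} \subset D'_{\preceq \phy}$. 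Part~\eqref{it:dgen-eq} then follows by an admissibility argument: let $D''$ be generated by $\{\bDelta_\chi\la n\ra : \chi \in \phy\}$; part~\eqref{it:dgen-le} shows that $D''$ and $\Dfd(\SW)_{\prec \phy}$ together generate $\Dfd(\SW)_{\preceq \phy}$, and the orthogonality $\uRHom(\bDelta_\psi, \bDelta_\chi) = 0$ for $[\psi] \prec [\chi]$ follows from Corollary~\ref{cor:phy-admis} (since $\bDelta_\psi \in \Db(\SW)_{\prec [\chi]}$ and $\bDelta_\chi \in \Db(\SW)^{[\chi]}$). Thus $(\Dfd(\SW)_{\prec \phy}, D'')$ is an admissible pair in $\Dfd(\SW)_{\preceq \phy}$, and uniqueness of admissible complements (Lemma~\ref{lem:admissible}) forces $D'' = \Dfd(\SW)^\phy$.

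For part~\eqref{it:bdel-admis}, the forward direction is immediate: if $M \in \Db(\SW)_{\prec \phy}$, then Proposition~\ref{prop:nabla-admis} gives $\uRHom(\Delta_\chi, M) = 0$ for all $[\chi] \succeq \phy$, and since $\bDelta_\chi \in \Db(\SW)^{[\chi]}$ is generated by $\Delta_\psi\la n\ra$ with $\psi \sim \chi$, also $\uRHom(\bDelta_\chi, M) = 0$. For the converse, I argue via duality. Applying $\D$ to the triangle $\bDelta_\chi \to \bnabla_\chi \to C \to$, and using that $\D$ restricts to anti-equivalences $\Dfd(\SW)_{[\chi]} \simto \Dfd(\SW)^{[\bar\chi]}$ and $\Dfd(\SW)^{[\chi]} \simto \Dfd(\SW)_{[\bar\chi]}$ (because $\D \nabla_\chi \cong \Delta_{\bar\chi}$ by Proposition~\ref{prop:delta-exist} and~\eqref{eqn:dual-strat}), I obtain a triangle in $\Dfd(\SW)_{\preceq [\bar\chi]}$ whose positional data matches that of the defining triangle $\bDelta_{\bar\chi} \to \bnabla_{\bar\chi} \to \jmath(\bnabla_{\bar\chi}) \to$ used in Proposition~\ref{prop:dual}. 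Uniqueness of the admissible decomposition with respect to the pair $(\Dfd(\SW)^{[\bar\chi]}, \Dfd(\SW)_{\prec [\bar\chi]})$ forces $\D(\bDelta_\chi) \cong \bnabla_{\bar\chi}\la m\ra[-k]$ for some $\chi$-dependent shifts $m,k$ arising from Serre-style duality over $\Sfh$. For any $M \in \Db(\SW)$, contravariance of $\D$ then gives
\[
\uRHom(\bDelta_\chi, M) \;\cong\; \uRHom(\D M, \D \bDelta_\chi) \;\cong\; \uRHom(\D M, \bnabla_{\bar\chi})\la m\ra[-k],
\]
and since the shifts do not affect vanishing, this is zero for all $[\chi] \succeq \phy$ if and only if $\uRHom(\D M, \bnabla_\psi) = 0$ for all $[\psi] \succeq \phy$ (using $[\bar\chi] = [\chi]$). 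By Lemma~\ref{lem:bnabla-admis} this is equivalent to $\D M \in \Db(\SW)_{\prec \phy}$, which by the $\D$-stability from~\eqref{eqn:dual-strat} is equivalent to $M \in \Db(\SW)_{\prec \phy}$. The main obstacle is making the duality identification rigorous, since $\bDelta$ is defined only implicitly through the adjunction in Proposition~\ref{prop:dual}; one must carefully compare the dualized triangle with the adjunction-based decomposition, accounting correctly for the Serre shift.
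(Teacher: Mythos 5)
Parts~\eqref{it:dgen-le} and~\eqref{it:dgen-eq} of your proposal are correct and essentially coincide with the paper's argument (induction on phyla using Proposition~\ref{prop:dual}\eqref{it:dual-cone}, then admissible-complement uniqueness). For part~\eqref{it:bdel-admis}, however, there is a genuine gap.

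The gap lies in the claimed isomorphism $\D(\bDelta_\chi) \cong \bnabla_{\bar\chi}\la m\ra[-k]$. Your argument is to apply $\D$ to the defining triangle $\bDelta_\chi \to \bnabla_\chi \to C \to$ and then invoke uniqueness of the admissible decomposition to compare with the triangle $\bDelta_{\bar\chi} \to \bnabla_{\bar\chi} \to C' \to$. But uniqueness of the decomposition of Lemma~\ref{lem:admissible}\eqref{it:adm-dt} only identifies the two ``outer'' terms \emph{when the two middle objects agree}. After applying $\D$, the middle object of your first triangle is $\D(\bnabla_\chi)$, not $\bnabla_{\bar\chi}\la m\ra[-k]$, and there is no reason these should coincide: $\D(\bnabla_\chi)$ lies in $\Dfd(\SW)^{[\chi]}$ while $\bnabla_{\bar\chi}$ lies in $\Dfd(\SW)_{[\chi]}$, and neither object is, a priori, obtainable from the other by a shift. (In concrete terms, $\bnabla_\chi$ has simple cosocle $L_\chi$, so its Serre dual has simple socle $L_{\bar\chi}$; whereas $\bnabla_{\bar\chi}$ again has simple cosocle.) Moreover, the two decomposition triangles you are comparing are with respect to \emph{different} admissible pairs---one for $(\Dfd(\SW)_{\prec\phy}, \Dfd(\SW)_\phy)$ after dualizing, the other for $(\Dfd(\SW)^{\phy}, \Dfd(\SW)_{\prec\phy})$---so even a match of middle terms would not immediately give what you want. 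You acknowledge this as an ``obstacle,'' but it is actually the crux of your whole approach to~\eqref{it:bdel-admis}, and it is not established by the paper either.

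The paper avoids this issue entirely: no object-level identification between $\D(\bnabla_\chi)$ and any $\bDelta$ is needed. Since $\Db(\SW)_{\prec\phy}$ is $\D$-stable, Lemma~\ref{lem:bnabla-admis} gives that $M \in \Db(\SW)_{\prec\phy}$ iff $\uRHom(\D(\bnabla_\chi), M) = 0$ for all $[\chi]\succeq\phy$. Now part~\eqref{it:dgen-eq}, together with Proposition~\ref{prop:bnabla-gen}\eqref{it:gen-dual}, shows that the sets $\{\bDelta_\chi\la n\ra : \chi \in \phy'\}$ and $\{\D(\bnabla_\chi)\la n\ra : \chi \in \phy'\}$ generate the same triangulated category, namely $\Dfd(\SW)^{\phy'}$, for each phylum $\phy'$. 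Vanishing of $\uRHom({-}, M)$ on one generating set is therefore equivalent to vanishing on the other, which gives part~\eqref{it:bdel-admis} directly. Replacing your proposed pointwise isomorphism by this generation statement both closes the gap and shortens the argument considerably.
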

\begin{proof}
In view of Proposition~\ref{prop:dual}\eqref{it:dual-cone}, it is easy to see by induction on $\phy$ that part~\eqref{it:dgen-le} above holds.  Then, part~\eqref{it:dgen-eq} can be deduced from part~\eqref{it:dgen-le} using the same argument that was used to deduce Proposition~\ref{prop:bnabla-gen}\eqref{it:gen-eq} from Proposition~\ref{prop:bnabla-gen}\eqref{it:gen-le}.

Finally, since $\Db(\SW)_{\prec \phy}$ is stable under $\D$, it follows from Lemma~\ref{lem:bnabla-admis} that $M \in \Db(\SW)_{\prec \phy}$ if and only if $\uRHom(\D(\bnabla_\chi),M) = 0$ for all $\chi$ with $[\chi] \succeq \phy$.  Part~\eqref{it:dgen-eq} implies that the latter condition is equivalent to the one appearing in part~\eqref{it:bdel-admis} of the proposition.
\end{proof}

The next statement is immediate from Propositions~\ref{prop:dual}, \ref{prop:bnabla-gen}\eqref{it:gen-eq}, and~\ref{prop:bdelta}\eqref{it:dgen-eq}; cf.~Proposition~\ref{prop:delta-nabla-cone}.

\begin{cor}\label{cor:std-costd-equiv}
The equivalence $T_{\phy}: \Db(\SW)_{\phy} \simto \Db(\SW)^{\phy}$ of~\eqref{eqn:t-defn} restricts to an equivalence
\[
T_{\phy}: \Dfd(\SW)_{\phy} \simto \Dfd(\SW)^{\phy}.
\]
We have $T_{\phy}(\bnabla_\chi) \cong \bDelta_\chi$.  More generally, $T_{\phy}$ takes $\phy$-quasicostandard objects to $\phy$-quasistandard objects. \qed
\end{cor}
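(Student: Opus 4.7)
The plan is to unwind the definition of $T_\phy$ from~\eqref{eqn:t-defn} and check each claim using the generation and admissibility results already in place. Recall that $T_\phy$ is the composite
\[
\Db(\SW)_{\phy} \hookrightarrow \Db(\SW)_{\preceq \phy} \twoheadrightarrow \Db(\SW)_{\preceq \phy}/\Db(\SW)_{\prec \phy} \overset{\sim}{\leftarrow} \Db(\SW)^{\phy},
\]
so computing $T_\phy(X)$ amounts to identifying, for a given $X \in \Db(\SW)_\phy$, an object of $\Db(\SW)^\phy$ that agrees with $X$ modulo $\Db(\SW)_{\prec \phy}$.

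First I would handle the identification $T_\phy(\bnabla_\chi) \cong \bDelta_\chi$. By Proposition~\ref{prop:dual}, $\bDelta_\chi$ lies in $\Dfd(\SW)^\phy \subset \Db(\SW)^\phy$ and there is a morphism $\bDelta_\chi \to \bnabla_\chi$ whose cone lies in $\Dfd(\SW)_{\prec \phy} \subset \Db(\SW)_{\prec \phy}$. Hence $\bDelta_\chi$ and $\bnabla_\chi$ become canonically isomorphic in the quotient, which, traced through the equivalences of Lemma~\ref{lem:quotient}, gives the claim.

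Next I would prove that $T_\phy$ restricts to an equivalence between $\Dfd(\SW)_\phy$ and $\Dfd(\SW)^\phy$. Since $T_\phy$ is triangulated, it suffices to check that it sends a generating set of the first to a generating set of the second and that its inverse does likewise. By Proposition~\ref{prop:bnabla-gen}\eqref{it:gen-eq}, the category $\Dfd(\SW)_\phy$ is generated by the $\bnabla_\chi\la n\ra$ with $\chi \in \phy$, and by the previous paragraph these are sent to the $\bDelta_\chi\la n\ra$, which generate $\Dfd(\SW)^\phy$ by Proposition~\ref{prop:bdelta}\eqref{it:dgen-eq}. The same reasoning applied to $T_\phy^{-1}$ completes the argument.

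Finally, for the statement about $\phy$-quasicostandard objects, I would argue inductively along a finite filtration. A $\phy$-quasicostandard object $M$ is finite-dimensional with a finite filtration $0 = M_0 \subset M_1 \subset \cdots \subset M_k = M$ whose successive quotients are various $\bnabla_{\chi_i}\la n_i\ra$; viewed in $\Dfd(\SW)_\phy$ this filtration becomes a sequence of distinguished triangles. Applying the triangulated functor $T_\phy$ yields distinguished triangles with cones $\bDelta_{\chi_i}\la n_i\ra \in \Ex$, and since $\Ex$ is closed under extensions in $\Dfd(\SW)$ (Theorem~\ref{thm:tstruc}), induction shows $T_\phy(M) \in \Ex$ and the triangles are short exact sequences in $\Ex$, producing a $\bDelta$-filtration of $T_\phy(M)$. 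The only subtle point, and the step I expect to require the most care, is verifying that the abstract distinguished triangles really do assemble into an honest filtration in the abelian category $\Ex$; this follows once all intermediate objects are placed in $\Ex$, which is handled by the induction just described.
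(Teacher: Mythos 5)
Your proof is correct and follows the same route the paper intends: the identification $T_\phy(\bnabla_\chi) \cong \bDelta_\chi$ comes from the morphism in Proposition~\ref{prop:dual} whose cone lies in $\Dfd(\SW)_{\prec \phy}$, the restriction of $T_\phy$ follows by matching generators via Propositions~\ref{prop:bnabla-gen}\eqref{it:gen-eq} and~\ref{prop:bdelta}\eqref{it:dgen-eq}, and the transfer of filtrations is the standard induction using extension-closure of the heart $\Ex$. The paper states this corollary without proof (citing exactly these propositions), and your write-up supplies the details in the intended way.
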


\begin{lem}\label{lem:efface}
Let $\phy$ be a phylum, and let $\chi \in \phy$.
\begin{enumerate}
\item For any morphism $g: \bnabla_\chi \to M[d]$ where $d > 0$ and $M$ is $\phy$-quasicostan\-dard, there exists an $\phy$-quasicostandard module $Y$ and a surjective map $h: Y \twoheadrightarrow \bnabla_\chi$ such that $g \circ h = 0$.\label{it:eff-costd}
\item For any morphism $g: M[d] \to \bDelta_\chi$ where $d < 0$ and $M$ is $\phy$-quasistandard, there exists an $\phy$-quasistandard module $Y$ and an injective map $h: \bDelta_\chi \hookrightarrow Y$ such that $h \circ g = 0$.\label{it:eff-std}
\end{enumerate}
\end{lem}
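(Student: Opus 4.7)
The plan is to prove both parts by the same Yoneda-extension technique, working directly in the abelian category $\cQ_\phy$ of $\phy$-quasicostandard modules for part~(1), and transferring via the equivalence $T_\phy$ of Corollary~\ref{cor:std-costd-equiv} for part~(2). The underlying observation is that, in an abelian category with enough small projective objects, any positive-degree Ext class is effaced by the term adjacent to its source in a Yoneda representation.

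For part~(1), combining Proposition~\ref{prop:phy-ff} and Corollary~\ref{cor:phyqco-ff} gives the identification $\Hom^d_{\Db(\SW)}(\bnabla_\chi, M) \cong \Ext^d_{\cQ_\phy}(\bnabla_\chi, M)$, so $g$ is represented by a Yoneda $d$-fold extension
\[
0 \to M \to X_d \to \cdots \to X_2 \to X_1 \xrightarrow{h} \bnabla_\chi \to 0.
\]
Using the equivalence $\cP_\phy \simeq \Gamma\lgmod$ of Remark~\ref{rmk:phy-fd} together with the truncation construction from the proof of Lemma~\ref{lem:fd}, I may arrange for every $X_i$ to lie in $\cQ_\phy$, so that $X_1$ is $\phy$-quasicostandard. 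Take $Y = X_1$; then $h$ is surjective, and the composition $g \circ h$ is represented by the pullback of the extension along $h$, whose terminal short exact sequence
\[
0 \to \ker h \to X_1 \times_{\bnabla_\chi} X_1 \to X_1 \to 0
\]
splits via the diagonal $x \mapsto (x,x)$. Hence the corresponding $\Ext^1$ class, and with it the Yoneda product representing $g \circ h$, vanishes.

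For part~(2), the same argument applies after transferring via the triangulated equivalence $T_\phy^{-1}: \Dfd(\SW)^\phy \simto \Dfd(\SW)_\phy$, which sends $\bDelta_\chi$ to $\bnabla_\chi$ and exchanges $\phy$-quasistandard objects with $\phy$-quasicostandard ones. Provided $T_\phy$ is compatible with the exotic $t$-structures on both sides, part~(2) reduces to the following analog in $\cQ_\phy$: given $g' \in \Ext^{-d}_{\cQ_\phy}(M', \bnabla_\chi)$ with $-d > 0$ and $M'$ $\phy$-quasicostandard, find an injection $h': \bnabla_\chi \hookrightarrow Y'$ in $\cQ_\phy$ with $h' \circ g' = 0$. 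The dual Yoneda representation $0 \to \bnabla_\chi \xrightarrow{h'} Z_{-d} \to \cdots \to Z_1 \to M' \to 0$ (with all terms in $\cQ_\phy$, by the same truncation) does the job: set $Y' = Z_{-d}$, and observe that the pushout of the first short exact sequence along its own inclusion $h'$ is split, so $h' \circ g'$ vanishes by the same Yoneda-product argument.

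The principal technical obstacle is producing a Yoneda representation with every term in the small category $\cQ_\phy$ rather than in the ambient $\cP_\phy$; this is precisely what the truncation argument of Lemma~\ref{lem:fd} supplies, after observing that $\Gamma = \uHom(F,F)^\op$ (with $F = \bigoplus_{\psi \in \phy}\nabla_\psi$) is nonnegatively graded with finite-dimensional grade-zero part, since each $\nabla_\psi$ has grades ${}\ge 0$ with $\gr_0 \nabla_\psi = L_\psi$. A secondary point, relevant only to part~(2), is verifying the $t$-exactness of $T_\phy$, which should follow from the distinguished triangles provided by Propositions~\ref{prop:dual} and~\ref{prop:delta-nabla-cone}.
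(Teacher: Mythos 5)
Your proof is correct and takes essentially the same route as the paper: identify $\Hom^d_{\Dfd(\SW)}(A,B)$ for $A,B \in \cQ_\phy$ with Yoneda $\Ext^d_{\cQ_\phy}(A,B)$ via Corollary~\ref{cor:phyqco-ff}, efface by the standard Yoneda argument in the abelian category $\cQ_\phy$, and transfer part~(2) to a dual statement about $\bnabla_\chi$ via the equivalence $T_\phy$ of Corollary~\ref{cor:std-costd-equiv}. The only inefficiencies are cosmetic: once $\Hom^d_{\Dfd(\SW)} \cong \Ext^d_{\cQ_\phy}$ is established, Yoneda representatives with all terms in $\cQ_\phy$ exist automatically because $\cQ_\phy$ is abelian (so the detour through $\cP_\phy$ and the truncation device of Lemma~\ref{lem:fd} is not needed), and no separate $t$-exactness of $T_\phy$ is required beyond what Corollary~\ref{cor:std-costd-equiv} already states.
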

\begin{proof}
Let us consider the following additional statement:
\begin{enumerate}
\setcounter{enumi}{2}
\item For any morphism $g: M[d] \to \bnabla_\chi$ where $d < 0$ and $M$ is $\phy$-quasicostan\-dard, there exists an $\phy$-quasicostandard module $Y$ and an injective map $h: \bnabla_\chi \hookrightarrow Y$ such that $h \circ g = 0$.\label{it:eff-codual}
\end{enumerate}
Statements~\eqref{it:eff-costd} and~\eqref{it:eff-codual} both involve objects in the abelian category $\cQ_{\phy}$.  They both follow from the claim that the $\delta$-functor $\Hom^i_{\Dfd(\SW)}(A,B)$ (for $A, B \in \cQ_{\phy}$) is effaceable in both variables, and the latter is a consequence of Corollary~\ref{cor:phyqco-ff}.  Finally, Corollary~\ref{cor:std-costd-equiv} implies that conditions~\eqref{it:eff-std} and~\eqref{it:eff-codual} are equivalent to one another.
\end{proof}

\begin{thm}\label{thm:dereq}
There is an equivalence of triangulated categories
\[
\Db\Ex \simto \Dfd(\SW).
\]
\end{thm}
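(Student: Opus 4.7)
The plan is to apply the standard BBD criterion for the realization functor associated to a $t$-structure (cf.~\cite[Proposition~3.1.16 and Remark~3.1.17]{bbd}). One has the natural functor
\[
\rho: \Db\Ex \to \Dfd(\SW)
\]
induced from the exotic $t$-structure of Theorem~\ref{thm:tstruc}. This $\rho$ is an equivalence provided (a) $\Ex$ generates $\Dfd(\SW)$ as a triangulated category, and (b) for every $A, B \in \Ex$ and every $i \ge 0$, the natural map $\Ext^i_\Ex(A,B) \to \Hom^i_{\Dfd(\SW)}(A,B)$ is an isomorphism. Condition (a) is immediate from Proposition~\ref{prop:bnabla-gen}: the $\bnabla_\chi\la n\ra$ generate $\Dfd(\SW)$ and lie in $\Ex$. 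So the entire task is (b), and essential surjectivity will follow formally.

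For (b), the map is automatically an isomorphism for $i=0$ (definition of heart) and $i=1$ (hearts of $t$-structures are extension-closed). For $i \ge 2$, the BBD effaceability criterion reduces everything to the following: for each $A, B \in \Ex$ and each $\alpha \in \Hom^i_{\Dfd(\SW)}(A, B)$ with $i > 0$, there is an epimorphism $A' \twoheadrightarrow A$ in $\Ex$ such that $\alpha$ vanishes upon precomposition, and dually a monomorphism $B \hookrightarrow B''$ in $\Ex$ killing $\alpha$ upon postcomposition. Lemma~\ref{lem:efface} gives precisely this in the special case where $A = \bnabla_\chi$ (with $B$ quasicostandard) and, via Corollary~\ref{cor:std-costd-equiv}, where $B = \bDelta_\chi$ (with $A$ quasistandard).

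The remaining work is to propagate this effaceability from the $\bnabla_\chi\la n\ra$ and $\bDelta_\chi\la n\ra$ to arbitrary objects of $\Ex$. I would do this by a double induction on the phylum order $\precsim$ and on the length in $\Ex$. Since $\Ex$ is finite-length (Theorem~\ref{thm:tstruc}), any $A \in \Ex$ possesses a simple subobject $\Sigma_\psi$ with shorter quotient. For the simples, the weakly quasi-hereditary structure gives short exact sequences $0 \to \bar K \to \bDelta_\psi \to \Sigma_\psi \to 0$ with $\bar K \in \Ex^{\prec[\psi]}$; replacing $A$ by this two-step resolution, the effaceability question for $A$ is reduced to the case of a single $\bDelta_\psi\la n\ra$ (for the head) together with effaceability in strictly lower phyla (for $\bar K$), to which the induction hypothesis applies. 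A parallel devissage using $0 \to \Sigma_\chi \to \bnabla_\chi \to \bar C \to 0$ handles the second variable. The five-lemma, applied to the long exact sequences of $\Hom^\bullet_{\Dfd(\SW)}$, propagates effaceability across extensions, and the whole argument closes up.

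The main obstacle — and really the only substantive step once the preceding sections are in place — is this propagation of effaceability, which is delicate because $\Ex$ is only \emph{weakly} quasi-hereditary (Remark~\ref{rmk:strong-qhered}): general objects of $\Ex$ need not admit honest $\bDelta$- or $\bnabla$-filtrations, so one cannot reduce directly by filtration, and one must pass through the simples using Theorem~\ref{thm:tstruc}(\ref{it:qhered-std}) and~(\ref{it:qhered-costd}) together with the induction on phyla. Once this is set up, essential surjectivity is automatic: the essential image of a fully faithful triangulated functor containing a generating set equals the whole category, and Proposition~\ref{prop:bnabla-gen} supplies that generating set.
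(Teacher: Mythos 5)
You have the right overall strategy --- the paper's proof is exactly this: a BBD-style effaceability argument whose decisive technical input is Lemma~\ref{lem:efface}, with the remaining mechanics delegated to~\cite[Theorem~3.15]{a:pcsnc}, whose hypotheses (see~\cite[Definition~3.5]{a:pcsnc}) are precisely what Lemma~\ref{lem:efface} supplies. Your identification of the generating set via Proposition~\ref{prop:bnabla-gen}, of Lemma~\ref{lem:efface} as the crucial technical point, and of the residual task as an effaceability question are all in agreement with the paper.

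Where your sketch has a genuine gap is in the propagation step. Effaceability is not propagated across extensions by the five-lemma: given a short exact sequence $0 \to A_1 \to A \to A_2 \to 0$ in $\Ex$ and a class $\alpha \in \Hom^n_{\Dfd(\SW)}(A,B)$, the restriction $\alpha|_{A_1}$ can be killed by an epimorphism $M_1 \twoheadrightarrow A_1$, but extending this to an epimorphism onto $A$ is an extension problem whose obstruction lies in $\Ext^2(A_2, \ker(M_1 \twoheadrightarrow A_1))$. What the five-lemma \emph{does} propagate across extensions is the conclusion that the comparison map $\Ext^n_{\Ex} \to \Hom^n_{\Dfd(\SW)}$ is an isomorphism once it is known at the constituents in both variables; that is not the same assertion, and your outline elides the distinction. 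Moreover, Lemma~\ref{lem:efface} only supplies effaceability when both arguments are confined to a single phylum and one of them is a $\bnabla_\chi$ or a $\bDelta_\chi$, and with the $\bnabla$'s effacing the contravariant variable and the $\bDelta$'s the covariant one; the cross-phylum $\Hom$-spaces are instead governed by the orthogonality statements (Lemma~\ref{lem:bnabla-admis}, Proposition~\ref{prop:bdelta}\eqref{it:bdel-admis}), and that has to be woven into the induction rather than obtained by devissage on composition series in $\Ex$. These gaps are bridgeable --- \cite[Theorem~3.15]{a:pcsnc} carries out the argument by working along the filtration of $\Dfd(\SW)$ by the subcategories $\Dfd(\SW)_{\preceq \phy}$ rather than by length in $\Ex$, which is what makes the same-phylum effaceability of Lemma~\ref{lem:efface} sufficient --- but as written your sketch omits the hardest part, which is precisely the part the paper offloads to that reference.
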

\begin{proof}
The criterion given in~\cite[Theorem~3.15]{a:pcsnc} states that for a $t$-structure arising from a dualizable abelianesque graded quasi-exceptional set, such a derived equivalence holds provided that the conditions in~\cite[Definition~3.5]{a:pcsnc} are satisfied.  That is precisely the content of Lemma~\ref{lem:efface}.
\end{proof}

\subsection{Tilting}
\label{subsect:tilting}

We conclude with a speculation about a possible alternative approach to Theorem~\ref{thm:dereq}.  It is not too difficult to deduce from Lemma~\ref{lem:nabla-filt-ses} and Theorem~\ref{thm:kato}\eqref{it:free-filt} that each $P_\chi$ is a projective limit of objects with a ``costandard filtration.''  To be more precise, one can show that there is a sequence of surjective maps
\[
\cdots \to M_2 \to M_1 \to M_0 \to 0
\]
in $\Ex$ such that the kernel of each map $M_i \to M_{i-1}$ admits a filtration whose subquotients are various $\bnabla_\chi\la k\ra$, and such that
\[
P_\chi \cong \lim_{\leftarrow} M_i.
\]
With a bit more effort, one can show that the corresponding statement with $\bDelta_\chi\la k\ra$ also holds.  (The latter requires more effort because the $\bDelta_\chi\la k\ra$ do not, in general, lie in $\SW\lgmod$ or in any other obvious $t$-structure on $\Db(\SW)$, so one does not have the luxury of studying the limit of the $M_i$ inside an abelian category.)  Since $P_\chi$ is a projective limit in both ways, one might say that $P_\chi$ is a ``protilting'' object for $\Ex$.

However, this viewpoint is somewhat unsatisfactory.  In quasi-hereditary categories (in the usual sense, cf.~Remark~\ref{rmk:strong-qhered}), where tilting objects give rise to derived equivalences, a key role is played by the fact that tilting objects have no self-extensions.  That follows from the fact that a standard object can have no extensions by a costandard object.  In contrast, in our setting, although it is true that the $P_\chi$ have no self-extensions, this cannot readily be deduced from the fact that they are protilting, because it can happen that $\Ext^1(\bDelta_\chi, \bnabla_\psi) \ne 0$ if $\chi \sim \psi$.

In other words, the property of being protilting does not seem to have any useful consequences.  A related observation is that the objects $\Delta_\chi$ and $\nabla_\chi$, which have better $\Ext^1$-vanishing properties, have no role in the notion of ``protilting,'' nor in the proof of Theorem~\ref{thm:dereq}.

A possible framework for remedying this situation is that of ``properly stratified categories,'' which have been studied by Frisk--Mazorchuk~\cite{fm:psat}. These are weakly quasi-hereditary categories equipped with additional classes of objects with good $\Ext^1$-vanishing properties.  In this paper, the notation for the objects $\Delta_\chi$, $\nabla_\chi$, $\bDelta_\chi$, $\bnabla_\chi$ was chosen to be reminiscent of theirs.

The category $\Ex$ is not a properly stratified category because the $\Delta_\chi$ and $\nabla_\chi$ do not belong to $\Ex$.  But perhaps it would be possible to develop a ``pro-'' version of the Frisk--Mazorchuk theory, one whose axioms are satisfied by $\Ex$.  In such a theory, Theorem~\ref{thm:dereq} might simply be a special case of a general Ringel duality result, analogous to~\cite[Theorem~5]{fm:psat}. 


\end{document}